\theoremstyle{plain}
\newtheorem{theorem}{Theorem}
\newtheorem{proposition}{Proposition}
\newtheorem{remark}{Remark}
\newtheorem{definition}{Definition}
\newcommand{\tr}{\operatorname{tr}}
\newcommand{\ind}{\operatorname{Ind}}
\numberwithin{equation}{section}
\numberwithin{lemma}{section}
\numberwithin{theorem}{section}
\numberwithin{remark}{section}
\numberwithin{claim}{section}
\numberwithin{corollary}{section}
\numberwithin{proposition}{section}
\numberwithin{definition}{section}
\numberwithin{condition}{section}
\numberwithin{figure}{section}
\DeclareFontFamily{OT1}{pzc}{}
\DeclareFontShape{OT1}{pzc}{m}{it}{<-> s * [1.100] pzcmi7t}{}
\DeclareMathAlphabet{\mathpzc}{OT1}{pzc}{m}{it}
\title{Renormalized Oscillation Theory for Regular Linear non-Hamiltonian Systems}
\author{Peter Howard}
\begin{document}

\maketitle

\begin{abstract} 
In recent work, Baird et al. have generalized the definition of the 
Maslov index to paths of Grassmannian subspaces that are not 
necessarily contained in the Lagrangian Grassmannian
[T. J. Baird, P. Cornwell, G. Cox, C. Jones, and R. Marangell,
{\it Generalized Maslov indices for non-Hamiltonian systems},
SIAM J. Math. Anal. {\bf 54} (2022) 1623-1668]. Such an extension opens
up the possibility of applications to non-Hamiltonian systems
of ODE, and Baird and his collaborators have taken advantage of 
this observation to establish oscillation-type results for obtaining
lower bounds on eigenvalue counts in 
this generalized setting. In the current analysis, the author 
shows that renormalized oscillation theory, appropriately 
defined in this generalized setting, can be applied in a natural 
way, and that it has the advantage, as in the traditional
setting of linear Hamiltonian systems, of ensuring monotonicity
of crossing points as the independent variable increases 
for a wide range of system/boundary-condition combinations.
This seems to mark the first effort to extend the renormalized 
oscillation approach to the non-Hamiltonian setting. 
\end{abstract}

\section{Introduction}\label{introduction}

For values of $\lambda$ in a real interval $I \subset \mathbb{R}$, 
we consider first-order ODE systems 
\begin{equation} \label{nonhammy}
    \frac{dy}{dx} = A (x; \lambda) y, 
    \quad x \in (0, 1), \quad y(x; \lambda) \in \mathbb{R}^n, 
    \quad n \in \{2, 3, \dots \},
\end{equation}
subject to boundary conditions 
\begin{equation} \label{bc}
y(0) \in \mathpzc{p}, 
\quad y(1) \in \mathpzc{q}, 
\end{equation}
where for some $m \in \{1, 2, ..., n-1 \}$ $\mathpzc{p}$ denotes
a subspace of $\mathbb{R}^n$ with dimension $m$ and 
$\mathpzc{q}$ denotes a subspace of $\mathbb{R}^n$ with 
dimension $n - m$. Throughout the analysis, we will assume 
that for some fixed values $\lambda_1, \lambda_2 \in I$,
$\lambda_1 < \lambda_2$, 
$A \in C([0,1] \times [\lambda_1, \lambda_2], \mathbb{R}^{n \times n})$,
and for convenient reference we will denote this 
assumption {\bf (A)}. In addition, for our main result we will 
assume the following, in which we denote the entries of 
$A(x; \lambda)$ by $\{a_{ij} (x; \lambda)\}_{i, j = 1}^n$ : 

\medskip
\noindent
{\bf (B)} For each $i \in \{1, 2, \dots, n\}$, the entry $a_{ii} (x; \lambda)$
is independent of $\lambda$, and for all 
$i, j \in \{1, 2, \dots, n\}$, $i \ne j$, and all $\lambda \in [\lambda_1, \lambda_2]$,
the difference $a_{ij} (x; \lambda) - a_{ij} (x; \lambda_2)$ is independent of $x$.

\medskip

Our analysis is primarily motivated by the prospect of applying the 
generalized Maslov index theory of \cite{BCCJM2022} to systems 
(\ref{nonhammy}) arising when an evolutionary PDE such as a 
viscous conservation law is linearized about a traveling 
wave solution. In particular, suppose $\bar{u} (x - st)$ denotes
a viscous profile for the system 
\begin{equation} \label{vcl-eqn}
    u_t + f(u)_x = B u_{xx}, \quad u(x, t) \in \mathbb{R}^l,
    \quad l \in \mathbb{N},
\end{equation}
where for this motivating example we take $B$ to be a constant 
viscosity matrix. In a moving coordinate frame, we can view 
$\bar{u} (x)$ as a stationary solution for the system
\begin{equation*}
    u_t - s u_x + f(u)_x = B u_{xx},
\end{equation*}
and if we linearize about $\bar{u} (x)$ with $u = \bar{u} + v$
(and drop off nonlinear terms), we arrive at the linear 
system 
\begin{equation*}
    v_t + ((Df(\bar{u}) - s I) v)_x = B v_{xx},
\end{equation*}
with associated eigenvalue problem 
\begin{equation} \label{vcl-evp}
    -B \phi '' + ((D f(\bar{u}) - s I) \phi)_x = \lambda \phi,
\end{equation}
where $Df (\bar{u} (x))$ denotes the usual Jacobian matrix
for $f$ evaluated at the wave. 
Under quite general conditions, the stability of $\bar{u} (x)$
is determined by the eigenvalues of (\ref{vcl-evp}) 
(see, e.g., \cite{ZH1998}), motivating our interest in 
eigenvalue problems of the general form 
\begin{equation} \label{general-form}
   -B \phi '' + W(x) \phi' + V(x) \phi = \lambda \phi.  
\end{equation}
In order to place this system in the setting of 
(\ref{nonhammy}), we write $y = {y_1 \choose y_2}$
with $y_1 = \phi$ and $y_2 = B \phi'$, giving 
(\ref{nonhammy}) with $n = 2l$ and
\begin{equation} \label{vclA}
    A(x; \lambda)
    = \begin{pmatrix}
    0 & B^{-1} \\
    V(x) - \lambda I & W(x) B^{-1}.
    \end{pmatrix}
\end{equation}
In this case, we see that Assumption {\bf (A)} holds
as long as $B$ is invertible and $W, V \in C([0,1],\mathbb{R}^{l \times l})$,
while Assumption {\bf (B)} is immediate. An additional family of motivating
examples is discussed in Section \ref{higher-order-section}.

In this general setting, we will say that $\lambda$ is an eigenvalue of (\ref{nonhammy})-(\ref{bc})
provided there exists a solution $y (\cdot; \lambda) \in C^1 ([0, 1], \mathbb{R}^n)$
of (\ref{nonhammy})-(\ref{bc}), and as usual we will refer to the dimension of the 
space of all such solutions as the geometric multiplicity of $\lambda$. 
Our main goal is to show that a notion of renormalized oscillation 
theory (described below) can be used to obtain a lower bound on the number of eigenvalues
$\mathcal{N}_{\#} ([\lambda_1, \lambda_2])$ (counted {\it without} multiplicity)
that (\ref{nonhammy})-(\ref{bc}) has on an interval $[\lambda_1, \lambda_2]$. 
Under our relatively weak assumptions on the dependence of $A (x; \lambda)$
on $\lambda$, it's possible that 
the eigenvalues of (\ref{nonhammy}), as we've defined them, won't comprise a discrete
set on the interval $[\lambda_1, \lambda_2]$. In this case, our convention will 
be to take $\mathcal{N}_{\#} ([\lambda_1, \lambda_2]) = + \infty$, in which case
our lower bounds on $\mathcal{N}_{\#} ([\lambda_1, \lambda_2])$ will be taken to 
hold trivially. For a more nuanced perspective, developed in the setting of linear
Hamiltonian systems, we refer the reader to \cite{Elyseeva2020} and references 
therein.

Our primary tool for this analysis will be a generalization of the Maslov index
introduced in \cite{BCCJM2022}, and for the purposes of this introduction
we will start with a brief, intuitive discussion of this object 
(see Section \ref{maslov-section} for additional details and reference 
\cite{BCCJM2022} for a full development). Precisely, we focus on the hyperplane
setting discussed in Section 3.2 of \cite{BCCJM2022}. 

To begin, for any $n \in \mathbb{N}$ 
we denote by $Gr_n (\mathbb{R}^{2n})$ the Grassmannian comprising
the $n$-dimensional subspaces of $\mathbb{R}^{2n}$, and we let 
$\mathpzc{g}$ denote an element of $Gr_n (\mathbb{R}^{2n})$. The
space $\mathpzc{g}$ can be
spanned by a choice of $n$ linearly independent vectors in 
$\mathbb{R}^{2n}$, and we will generally find it convenient to collect
these $n$ vectors as the columns of a $2n \times n$ matrix $\mathbf{G}$, 
which we will refer to as a {\it frame} for $\mathpzc{g}$.  
We specify a metric on $Gr_n (\mathbb{R}^{2n})$ in terms of appropriate orthogonal projections. 
Precisely, let $\mathcal{P}_i$ 
denote the orthogonal projection matrix onto $\mathpzc{g}_i \in Gr_n (\mathbb{R}^{2n})$
for $i = 1,2$. I.e., if $\mathbf{G}_i$ denotes a frame for $\mathpzc{g}_i$,
then $\mathcal{P}_i = \mathbf{G}_i (\mathbf{G}_i^* \mathbf{G}_i)^{-1} \mathbf{G}_i^*$.
We take our metric $d$ on $Gr_n (\mathbb{R}^{2n})$ to be defined 
by 
\begin{equation*}
d (\mathpzc{g}_1, \mathpzc{g}_2) := \|\mathcal{P}_1 - \mathcal{P}_2 \|,
\end{equation*} 
where $\| \cdot \|$ can denote any matrix norm. We will say 
that a path of Grassmannian subspaces 
$\mathpzc{g}: [a, b] \to \Lambda (n)$ is continuous provided it is 
continuous under the metric $d$. 

Given a continuous path of Grassmannian subspaces 
$\mathpzc{g}: [a, b] \to Gr_n (\mathbb{R}^{2n})$ 
and a fixed {\it target} space $\mathpzc{q} \in Gr_n (\mathbb{R}^{2n})$,
the generalized Maslov index of \cite{BCCJM2022} 
(under some additional conditions discussed below)
provides a means of counting intersections
between the the subspaces $\mathpzc{g} (t)$ and $\mathpzc{q}$
as $t$ increases from $a$ to $b$, counted
with direction, but not with multiplicity. (By multiplicity, we mean the 
dimension of the intersection; direction will be discussed in 
detail in Section \ref{maslov-section}). In order to understand how this
works, we first recall the notion of a kernel for a skew-symmetric
$n$-linear map $\omega$. 

\begin{definition}
For a skew-symmetric $n$-linear map
$\omega: \mathbb{R}^{2n} \times \dots \times \mathbb{R}^{2n} \to \mathbb{R}$
($\mathbb{R}^{2n}$ appearing $n$ times), we define the kernel, $\ker \omega$,
to be the subset of $\mathbb{R}^{2n}$,
\begin{equation*}
    \ker \omega := \{v \in \mathbb{R}^{2n}: \omega (v, v_1, \dots, v_{n-1}) = 0,
    \quad \forall \, v_1, v_2, \dots, v_{n-1} \in \mathbb{R}^{2n}\}.
\end{equation*}
\end{definition}

Given a target space $\mathpzc{q} \in Gr_n (\mathbb{R}^{2n})$, we first identify 
a skew-symmetric $n$-linear map $\omega_1$ so that $\mathpzc{q} = \ker \omega_1$.
For example, if we let $\{q_i\}_{i=1}^n$ denote a basis for $\mathpzc{q}$, then 
we can set 
\begin{equation*}
    \omega_1 (g_1, \dots, g_n)
    := \det (g_1 \,\, \dots \,\, g_n \,\, q_1 \dots q_n).
\end{equation*}
Next, we let $\omega_2$ denote any skew-symmetric $n$-linear map for which 
$\ker \omega_2 \ne \mathpzc{q}$, and we set 
\begin{equation*}
    \mathcal{H}_{\omega_i} 
    := \{\mathpzc{g} \in Gr_n (\mathbb{R}^{2n}): \mathpzc{g} \cap \ker \omega_i \ne \{0\}\},
    \quad i = 1, 2.
\end{equation*}
Then according to Definition 1.3 in \cite{BCCJM2022}, the set 
\begin{equation} \label{hyperplane-ma}
    \mathcal{M} := Gr_n (\mathbb{R}^{2n}) \backslash (\mathcal{H}_{\omega_1} \cap \mathcal{H}_{\omega_2})
\end{equation}
is a {\it hyperplane Maslov-Arnold space}.

\begin{definition} \label{invariance-definition}
We say that the flow $t \mapsto \mathpzc{g} (t)$
is {\it invariant} on $[a, b]$ with respect to $\omega_1$
and $\omega_2$ provided the values
\begin{equation*}
    \omega_1 (g_1 (t), \dots, g_n (t))
    \quad \text{and} \quad
    \omega_2 (g_1 (t), \dots, g_n (t))
\end{equation*}
do not simultaneously vanish at any $t \in [a, b]$ (i.e., 
$\mathpzc{g} (t) \in \mathcal{M}$ for all $t \in [a, b]$). For brevity, 
we say that the triple $(\mathpzc{g} (\cdot), \omega_1, \omega_2)$
is invariant on $[a, b]$. 
Likewise, we say that a map 
$\mathpzc{g}: [a,b] \times [c, d] \to Gr_n (\mathbb{R}^{2n})$
is invariant on $[a,b] \times [c, d]$ with respect to $\omega_1$
and $\omega_2$ provided the values 
\begin{equation} \label{quantities}
    \omega_1 (g_1 (s, t), \dots, g_n (s, t))
    \quad \text{and} \quad
    \omega_2 (g_1 (s, t), \dots, g_n (s, t))
\end{equation}
do not simultaneously vanish at any 
$(s, t) \in [a,b] \times [c, d]$ (i.e., 
$\mathpzc{g} (s, t) \in \mathcal{M}$ for all $(s,t) \in [a, b] \times [c, d]$). 
For brevity, we say that the triple $(\mathpzc{g} (\cdot, \cdot), \omega_1, \omega_2)$
is invariant on $[a, b] \times [c, d]$. Finally, we will say 
that a map $\mathpzc{g}: [a,b] \times [c, d] \to Gr_n (\mathbb{R}^{2n})$
is invariant on the boundary of $[a,b] \times [c, d]$ with respect to $\omega_1$
and $\omega_2$ provided the values in (\ref{quantities}) do not 
simultaneously vanish at any point $(s, t)$ on the boundary of 
$[a,b] \times [c, d]$. 
\end{definition}

\begin{remark} The terminology ``invariant" is taken 
from \cite{BCCJM2022}, where it arises naturally as the 
condition that a path in $P (\bigwedge^n (\mathbb{R}^{2n}))$
(i.e., the projective space of all one-dimensional subspaces
of the wedge space $\bigwedge^n (\mathbb{R}^{2n})$) 
associated to the flow $t \mapsto \mathpzc{g} (t)$ lies 
entirely in the {\it Maslov-Arnold space} introduced 
in \cite{BCCJM2022}. While this notion of the Maslov-Arnold
space is critical to the development of \cite{BCCJM2022},
we will only use it indirectly here, and so will omit a precise
definition. 
\end{remark}

In the event that the flow $t \mapsto \mathpzc{g} (t)$
is invariant on $[a, b]$ with respect to $\omega_1$
and $\omega_2$, the generalized Maslov index of \cite{BCCJM2022} can be computed 
as the winding number in projective space $\mathbb{R}P^1$ of the map 
\begin{equation} \label{extended-maslov}
    t \mapsto [\omega_1 (g_1 (t), \dots, g_n (t)) : \omega_2 (g_1 (t), \dots, g_n (t))]
\end{equation}
through $[0 : 1]$ (with appropriate conventions taken for counting arrivals and 
departures; see Section \ref{maslov-section} below). 
Following the convention of \cite{BCCJM2022}, we denote 
the generalized Maslov index as $\ind(\cdots)$, though our specific notation 
is adapted from \cite{HS2018, HS2021}, leading to
$\ind (\mathpzc{g} (\cdot), \mathpzc{q}; [a, b])$; i.e., 
$\ind (\mathpzc{g} (\cdot), \mathpzc{q}; [a, b])$ is a directed count 
of the number of times the subspace $\mathpzc{g} (t)$ has non-trivial 
intersection with $\mathpzc{q}$, counted without multiplicity, as $t$ 
increases from $a$ to $b$. 

For many applications, we would like to compute the generalized Maslov index
associated with a pair of evolving spaces $\mathpzc{g}, \mathpzc{h}: [a, b] \to Gr_n (\mathbb{R}^{2n})$,
or more generally (as in the current setting) a pair of evolving 
spaces $\mathpzc{g}: [a, b] \to Gr_m (\mathbb{R}^{n})$ and 
$\mathpzc{h}: [a, b] \to Gr_{n-m} (\mathbb{R}^{n})$, where
$m \in \{1, 2, \dots, n-1\}$. Following the approach of Section 3.5
in \cite{Furutani2004}, we can proceed by specifying an evolving 
subspace $\mathpzc{f}: [a, b] \to Gr_{n} (\mathbb{R}^{2n})$ 
with frame 
\begin{equation*}
    \mathbf{F} (t)
    := 
    \begin{pmatrix}
        \mathbf{G} (t) & \mathbf{0}_{n \times (n - m)} \\
        \mathbf{0}_{n \times m} & \mathbf{H} (t)
    \end{pmatrix},
\end{equation*}
and taking as the (fixed) target space the subspace 
$\tilde{\Delta} \in Gr_n (\mathbb{R}^{2n})$ with frame 
$\mathbf{\tilde{\Delta}} = {-I_n \choose I_n}$.
(Here, $\mathbf{G} (t)$ and $\mathbf{H} (t)$ are respectively 
frames for $\mathpzc{g} (t)$ and $\mathpzc{h} (t)$.)
We then specify the generalized Maslov index for the 
pair $\mathpzc{g}, \mathpzc{h}: [a, b] \to Gr_n (\mathbb{R}^{2n})$
to be 
\begin{equation} \label{extended-pairs}
    \ind (\mathpzc{g} (\cdot), \mathpzc{h} (\cdot); [a, b])
    := \ind (\mathpzc{f} (\cdot), \tilde{\Delta}; [a, b]),
\end{equation}
where the right-hand side is computed precisely as 
specified above (i.e., as in \cite{BCCJM2022}). 

\begin{remark} \label{frames-remark} Here, and throughout, we will
be as consistent as possible with the following notational 
conventions: we will 
express Grassmannian subspaces with script letters such as $\mathpzc{g}$,
and we will denote a choice of basis elements for $\mathpzc{g}$
by $\{g_i\}_{i=1}^m$. We will also collect these basis elements 
into an associated frame 
\begin{equation*}
    \mathbf{G} = (g_1, \,\, g_2, \,\, \dots, \,\, g_m).
\end{equation*}
\end{remark}

Returning to (\ref{nonhammy}), we begin by letting 
$\mathbf{G} (x; \lambda) \in \mathbb{R}^{n \times m}$ 
denote a matrix solution of the system 
\begin{equation} \label{g-frame}
    \mathbf{G}' = A(x; \lambda) \mathbf{G},
    \quad \mathbf{G} (0; \lambda) = \mathbf{P},
\end{equation}
where $\mathbf{P}$ denotes any frame for the 
subspace $\mathpzc{p}$ from (\ref{bc}), and likewise
we let $\mathbf{H} (x; \lambda) \in \mathbb{R}^{n \times (n-m)}$ 
denote a matrix solution of the system 
\begin{equation} \label{h-frame}
    \mathbf{H}' = A(x; \lambda) \mathbf{H},
    \quad \mathbf{H} (1; \lambda) = \mathbf{Q},
\end{equation}
where $\mathbf{Q}$ denotes any frame for the 
subspace $\mathpzc{q}$ from (\ref{bc}), and we 
emphasize that $\mathbf{H} (x; \lambda)$ is initialized
at $x=1$. 
Correspondingly, we let $\mathpzc{g} (x; \lambda)$
denote the $m$-dimensional subspace of $\mathbb{R}^n$
with frame $\mathbf{G} (x; \lambda)$, and we let 
$\mathpzc{h} (x; \lambda)$
denote the $(n-m)$-dimensional subspace of $\mathbb{R}^n$
with frame $\mathbf{H} (x; \lambda)$. 

Next, we fix any interval $[\lambda_1, \lambda_2] \subset I$,
$\lambda_1 < \lambda_2$, and for any $\lambda \in [\lambda_1, \lambda_2]$,
we set 
\begin{equation} \label{f-frame}
    \mathbf{F} (x; \lambda)
    := 
    \begin{pmatrix}
        \mathbf{G} (x; \lambda) & \mathbf{0}_{n \times (n - m)} \\
        \mathbf{0}_{n \times m} & \mathbf{H} (x; \lambda_2)
    \end{pmatrix} \in \mathbb{R}^{2n \times n},
\end{equation}
and correspondingly let $\mathpzc{f} (x; \lambda)$
denote the $n$-dimensional subspace of $\mathbb{R}^{2n}$
with frame $\mathbf{F} (x; \lambda)$. (We note that 
in the specification of $\mathbf{F} (x; \lambda)$, 
the frame $\mathbf{H}$ is evaluated at $(x, \lambda_2)$.)

In order to compute the generalized Maslov index specified in 
(\ref{extended-pairs}), we introduce the skew-symmetric 
$n$-linear map
\begin{equation} \label{omega1}
    \omega_1 (f_1, f_2, \dots, f_n)
    := \det (\mathbf{F} \,\,\, \tilde{\mathbf{\Delta}}),
\end{equation}
where $\{f_j\}_{j=1}^n \subset \mathbb{R}^{2n}$ 
comprise the columns of the $2n \times n$ matrix 
$\mathbf{F}$. With this specification, it's clear 
that $\ker \omega_1 = \tilde{\Delta}$. In order to use 
the development of \cite{BCCJM2022}, we additionally 
need to introduce any skew-symmetric $n$-linear map
$\omega_2$ for which $\ker \omega_2 \ne \tilde{\Delta}$. 
In principle, we have considerable freedom in the 
selection of $\omega_2$, but in practice we would 
like to choose $\omega_2$ in a specific way so that 
all crossing points for the generalized Maslov index
will have the same direction. Toward this end, we
specify $\omega_2$ in the following way. 

\medskip
{\it Specification of $\omega_2$}. 
Recalling that we denote by $\{a_{ij} (x; \lambda)\}_{i,j = 1}^n$ 
the components of the matrix $A(x; \lambda)$ from (\ref{nonhammy}),
we let $\tilde{A} (\lambda)$ denote the 
real-valued $n \times n$ matrix with entries 
\begin{equation*}
    \tilde{a}_{i j} (\lambda) :=  
    \begin{cases}
    0 & i = j \\
    a_{ij} (0; \lambda) & i \ne j
    \end{cases},
\end{equation*}
and set 
\begin{equation} \label{mathbb-A-tilde}
    \tilde{\mathbb{A}} (\lambda_1, \lambda_2)
    := 
    \begin{pmatrix}
    \tilde{A} (\lambda_1) & 0 \\
    0 & \tilde{A} (\lambda_2)   
    \end{pmatrix}.
\end{equation}
Then we define the skew-symmetric $n$-linear map
\begin{equation} \label{omega2-defined}
    \omega_2 (f_1, \dots, f_n)
    := \sum_{k=1}^n \omega_1 (f_1, \dots, \tilde{\mathbb{A}}(\lambda_1, \lambda_2) f_k, \dots, f_n).
\end{equation}
\medskip

Given $\omega_1$ as specified in (\ref{omega1}), and $\omega_2$ such that 
$\ker \omega_2 \ne \tilde{\Delta}$ (not necessarily as in (\ref{omega2-defined})), 
we will be particularly interested in 
computing the generalized Maslov index along the boundary of 
$[0, 1] \times [\lambda_1, \lambda_2]$ (see Figure \ref{box-figure}, 
below, in which we follow a long-standing convention of taking the axis
associated with the spectral parameter to be horizontal). 
Following the notation of \cite{BCCJM2022}, we will denote 
this quantity $\mathfrak{m}$, and precisely it follows from path 
additivity of the generalized Maslov index (as discussed in Section
\ref{maslov-section}) that 
\begin{equation*}
\begin{aligned}
    \mathfrak{m} &= \ind (\mathpzc{g} (0; \cdot), \mathpzc{h} (0; \lambda_2); [\lambda_1, \lambda_2])
    + \ind (\mathpzc{g} (\cdot; \lambda_2), \mathpzc{h} (\cdot; \lambda_2); [0, 1]) \\
    & \quad - \ind (\mathpzc{g} (1; \cdot), \mathpzc{h} (x; \lambda_2); [\lambda_1, \lambda_2])
    - \ind (\mathpzc{g} (\cdot; \lambda_1), \mathpzc{h} (\cdot; \lambda_2); [0, 1]).
\end{aligned}
\end{equation*}
In the event that the triple $(\mathpzc{f} (\cdot; \cdot), \omega_1, \omega_2)$ is invariant
on the entirety of $[0, 1] \times [\lambda_1, \lambda_2]$ it follows by a homotopy argument 
that $\mathfrak{m} = 0$, but this need not be the case in general. 

We are now in a position to state our main theorem. 

\begin{theorem} \label{main-theorem}
For (\ref{nonhammy})-(\ref{bc}), suppose Assumptions {\bf (A)} hold for some interval 
$[\lambda_1, \lambda_2] \subset I$, $\lambda_1 < \lambda_2$,
and for each $(x, \lambda) \in [0, 1] \times [\lambda_1, \lambda_2]$,
let $\mathpzc{g} (x; \lambda)$, $\mathpzc{h} (x; \lambda)$, and 
$\mathpzc{f} (x; \lambda)$ be linear spaces with frames 
respectively specified in (\ref{g-frame}), (\ref{h-frame}),
and (\ref{f-frame}). In addition, let $\omega_1$ denote 
the skew-symmetric $n$-linear map specified in (\ref{omega1}).
If $\omega_2$ is any skew-symmetric $n$-linear map 
for which the triple $(\mathpzc{f} (\cdot; \cdot), \omega_1, \omega_2)$
is invariant on the boundary of $[0, 1] \times [\lambda_1, \lambda_2]$,
then 
\begin{equation} \label{main-theorem-inequality}
    \mathcal{N}_{\#} ([\lambda_1, \lambda_2])
    \ge |\ind (\mathpzc{g} (\cdot; \lambda_1), \mathpzc{h} (\cdot; \lambda_2); [0, 1]) + \mathfrak{m}|.
\end{equation}
If we additionally assume {\bf (B)}, and let $\omega_2$ be the particular 
skew-symmetric $n$-linear map specified in (\ref{omega2-defined}), 
then 
\begin{equation*}
    \ind (\mathpzc{g} (\cdot; \lambda_1), \mathpzc{h} (\cdot; \lambda_2); [0, 1])
    = \# \{x \in (0, 1]: \mathpzc{g} (x; \lambda_1) \cap \mathpzc{h} (x; \lambda_2) \ne \{0\} \},
\end{equation*}
where the right-hand side of this final relation indicates a direct count of the 
(necessarily) discrete number of values $x \in (0, 1]$ at which the subspaces
$\mathpzc{g} (x; \lambda_1)$ and $\mathpzc{h} (x; \lambda_2)$ intersect
non-trivially. 
\end{theorem}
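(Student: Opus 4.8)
The plan is to prove the two conclusions separately: the inequality follows from path additivity of the generalized Maslov index together with the ODE structure of the frames, while the monotonicity statement follows from a direct differentiation computation tailored to the special choice of $\omega_2$ in (\ref{omega2-defined}).

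For the inequality, I would first record the eigenvalue characterization that underlies everything: since a solution $y$ of (\ref{nonhammy}) with $y(0)\in\mathpzc{p}$ satisfies $y(x)\in\colspan\mathbf{G}(x;\lambda)=\mathpzc{g}(x;\lambda)$ for all $x$, and a solution with $y(1)\in\mathpzc{q}$ satisfies $y(x)\in\mathpzc{h}(x;\lambda)$ for all $x$, a value $\lambda$ is an eigenvalue of (\ref{nonhammy})--(\ref{bc}) of geometric multiplicity $d$ if and only if $\dim\bigl(\mathpzc{g}(1;\lambda)\cap\mathpzc{q}\bigr)=d$. Because $\mathpzc{h}(1;\lambda_2)=\colspan\mathbf{Q}=\mathpzc{q}$, the ``top'' edge term $\ind(\mathpzc{g}(1;\cdot),\mathpzc{h}(1;\lambda_2);[\lambda_1,\lambda_2])$ appearing in $\mathfrak{m}$ is then a directed count, without multiplicity, of the eigenvalues lying in $[\lambda_1,\lambda_2]$, so its absolute value is bounded by the number of distinct such eigenvalues, hence by $\mathcal{N}_{\#}([\lambda_1,\lambda_2])$ (the non-discrete case being trivial by the stated convention). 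Next I would argue that the other two edge terms in the path-additivity formula for $\mathfrak{m}$ displayed just before the theorem vanish. Along the ``bottom'' edge $\mathpzc{g}(0;\lambda)\equiv\mathpzc{p}$ is $\lambda$-independent, so $\mathpzc{f}(0;\lambda)$, and hence the projective curve $[\omega_1:\omega_2]$, is constant and the index is $0$. Along the ``right'' edge $\mathpzc{g}(x;\lambda_2)$ and $\mathpzc{h}(x;\lambda_2)$ solve the \emph{same} linear system $\mathbf{M}'=A(x;\lambda_2)\mathbf{M}$, so if $\Psi$ denotes the corresponding fundamental matrix one has $\mathpzc{g}(x;\lambda_2)\cap\mathpzc{h}(x;\lambda_2)=\Psi(x)\bigl(\mathpzc{p}\cap\Psi(1)^{-1}\mathpzc{q}\bigr)$, a space of $x$-independent dimension; the right-edge path therefore either never meets $\tilde{\Delta}$ or remains on it, and in either case its index is $0$. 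Substituting these vanishings into the decomposition of $\mathfrak{m}$ gives $\ind(\mathpzc{g}(\cdot;\lambda_1),\mathpzc{h}(\cdot;\lambda_2);[0,1])+\mathfrak{m}=-\ind(\mathpzc{g}(1;\cdot),\mathpzc{h}(1;\lambda_2);[\lambda_1,\lambda_2])$, and (\ref{main-theorem-inequality}) follows on taking absolute values.

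For the second conclusion I would analyze the left-edge path $x\mapsto\mathpzc{f}(x;\lambda_1)$, whose frame $\mathbf{F}(x;\lambda_1)=\operatorname{diag}\bigl(\mathbf{G}(x;\lambda_1),\mathbf{H}(x;\lambda_2)\bigr)$ solves $\mathbf{F}'=\mathbb{A}(x)\mathbf{F}$ with $\mathbb{A}(x)=\operatorname{diag}\bigl(A(x;\lambda_1),A(x;\lambda_2)\bigr)$. By Jacobi's formula $\tfrac{d}{dx}\omega_1(\mathbf{F}(x;\lambda_1))=\sum_k\omega_1(f_1,\dots,\mathbb{A}(x)f_k,\dots,f_n)$, while by definition $\omega_2(\mathbf{F}(x;\lambda_1))=\sum_k\omega_1(f_1,\dots,\tilde{\mathbb{A}}(\lambda_1,\lambda_2)f_k,\dots,f_n)$. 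Assumption {\bf (B)} enters precisely to force $\mathbb{A}(x)-\tilde{\mathbb{A}}(\lambda_1,\lambda_2)=\operatorname{diag}\bigl(D(x),D(x)\bigr)$ for a single matrix $D(x)$: (B) makes the diagonal of $A$ $\lambda$-independent and makes $a_{ij}(x;\lambda)-a_{ij}(0;\lambda)$ (for $i\ne j$) $\lambda$-independent, which is exactly what identifies $A(x;\lambda_1)-\tilde{A}(\lambda_1)$ with $A(x;\lambda_2)-\tilde{A}(\lambda_2)$. The matrix $\mathbb{D}(x):=\operatorname{diag}(D(x),D(x))$ maps $\tilde{\Delta}=\colspan\binom{-I_n}{I_n}$ into itself, and the crux is the elementary identity that whenever a $2n\times 2n$ matrix $\mathbb{M}$ satisfies $\mathbb{M}\tilde{\Delta}\subseteq\tilde{\Delta}$ one has $\sum_k\omega_1(f_1,\dots,\mathbb{M}f_k,\dots,f_n)=\kappa(\mathbb{M})\,\omega_1(\mathbf{F})$ for a scalar $\kappa(\mathbb{M})$ independent of $\mathbf{F}$ --- proved by writing $e^{-t\mathbb{M}}\tilde{\mathbf{\Delta}}=\tilde{\mathbf{\Delta}}N(t)$, deducing $\omega_1(e^{t\mathbb{M}}\mathbf{F})=e^{t\tr\mathbb{M}}\det N(t)\,\omega_1(\mathbf{F})$, and differentiating at $t=0$. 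Consequently $\sum_k\omega_1(f_1,\dots,\mathbb{D}(x)f_k,\dots,f_n)$ vanishes at every $x$ with $\omega_1(\mathbf{F}(x;\lambda_1))=0$, so at each such crossing $x_*$ we obtain $\tfrac{d}{dx}\omega_1(\mathbf{F}(x;\lambda_1))\big|_{x_*}=\omega_2(\mathbf{F}(x_*;\lambda_1))$, which is nonzero by invariance. Hence $\rho(x):=\omega_1(\mathbf{F}(x;\lambda_1))/\omega_2(\mathbf{F}(x;\lambda_1))$ is smooth near $x_*$ with $\rho(x_*)=0$ and $\rho'(x_*)=1$: every crossing is simple and has the same direction. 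It follows that the crossing set is finite and that $\ind(\mathpzc{g}(\cdot;\lambda_1),\mathpzc{h}(\cdot;\lambda_2);[0,1])$ equals its cardinality, with the generalized Maslov index endpoint conventions (recalled in Section~\ref{maslov-section}) excluding a crossing at $x=0$ and including one at $x=1$, i.e.\ counting $x\in(0,1]$, as asserted.

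I expect the principal obstacle to lie in this second part, and within it in the identification --- via (B) together with the kernel-preserving identity for $\mathbb{M}\mapsto\sum_k\omega_1(f_1,\dots,\mathbb{M}f_k,\dots,f_n)$ --- of the derivative of $\omega_1$ along the flow with $\omega_2$ up to a factor that is forced to vanish at crossings, together with a careful check of the endpoint bookkeeping that produces the half-open interval $(0,1]$. By contrast, the inequality is essentially an exercise in path additivity once the eigenvalue characterization and the two edge-vanishings are in hand.
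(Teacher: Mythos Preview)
Your argument is correct and follows the same overall architecture as the paper: a Maslov-box computation for the inequality (bottom and right shelves vanish, top shelf is bounded by the eigenvalue count), followed by a derivative computation showing $\tilde{\omega}_1'(x_*;\lambda_1)=\tilde{\omega}_2(x_*;\lambda_1)$ at every left-shelf crossing. The first part is essentially identical to the paper's Section~\ref{first-claim-section}.

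The monotonicity argument, however, is carried out differently. The paper works with the reduced $n\times n$ determinant $\tilde{\omega}_1=\det(\mathbf{G}\,\,\mathbf{H})$, differentiates row by row, substitutes the ODE relations (\ref{standard-relations}), and performs explicit row operations to isolate a term $(\sum_i a_{ii})\tilde{\omega}_1$ plus a remainder $\sum_i\tilde{\omega}_2^i(x;\lambda,\lambda)$ that is then identified with $\tilde{\omega}_2$ at $\lambda=\lambda_1$ via Assumption~{\bf (B)}. Your route stays at the $2n\times 2n$ level and replaces the hands-on row reduction by a structural observation: {\bf (B)} forces $\mathbb{A}(x)-\tilde{\mathbb{A}}(\lambda_1,\lambda_2)=\operatorname{diag}(D(x),D(x))$, this block-diagonal matrix preserves $\tilde{\Delta}=\ker\omega_1$, and for any $\mathbb{M}$ preserving $\ker\omega_1$ the map $\mathbf{F}\mapsto\sum_k\omega_1(f_1,\dots,\mathbb{M}f_k,\dots,f_n)$ is a scalar multiple of $\omega_1(\mathbf{F})$. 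This is cleaner and makes transparent \emph{why} {\bf (B)} is exactly the right hypothesis---it is precisely what makes the difference $\mathbb{A}-\tilde{\mathbb{A}}$ preserve the target. The paper's explicit computation, on the other hand, yields the more detailed relation (\ref{omega-tilde-prime}) valid for all $\lambda\in[\lambda_1,\lambda_2]$ (not just $\lambda_1$), which the paper later exploits in its invariance framework (Section~\ref{invariance-section}, e.g.\ equation~(\ref{omega1prime-omega2})); your argument gives only what is needed for the theorem itself.
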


\begin{remark} \label{main-theorem-remark}
We note that in this statement we don't require that $\ker \omega_2$ 
be different from $\ker \omega_1$. This is simply because if 
$\ker \omega_2 = \ker \omega_1$, then the triple 
$(\mathpzc{f} (\cdot; \cdot), \omega_1, \omega_2)$ is invariant 
on the boundary of $[0, 1] \times [\lambda_1, \lambda_2]$ if and 
only if $\omega_1 (f_1 (x; \lambda), \cdots, f_{n} (x; \lambda))$
is non-zero for all $(x, \lambda) \in \partial ([0, 1] \times [\lambda_1, \lambda_2])$. 
But in this case, both sides of (\ref{main-theorem-inequality})
must be zero, and so the statement holds trivially. 

In order to understand why $x = 1$ is included in the final count in 
Theorem \ref{main-theorem} while $x = 0$ is not, we note that 
the final assertion of the theorem is established by showing that
the crossing points in the calculation of 
$ \ind (\mathpzc{g} (\cdot; \lambda_1), \mathpzc{h} (\cdot; \lambda_2); [0, 1])$
are monotonically positive. By convention, a positive crossing at the left
endpoint of an interval does not contribute to the count, while a positive crossing 
at the right endpoint of an interval does. This notion of direction is
discussed in Section \ref{maslov-section}. 
\end{remark}

In the remainder of this introduction, we briefly discuss the development 
of renormalized oscillation theory, and set out a plan for the paper. 
For the former, the notion of renormalized oscillation theory was introduced 
in \cite{GST1996} in the context of single Sturm-Liouville
equations, and subsequently was developed in 
\cite{Teschl1996, Teschl1998} for Jacobi operators and 
Dirac operators. More recently, Gesztesy and Zinchenko
have extended these early results to the setting of 
singular Hamiltonian systems in the limit-point case \cite{GZ2017},
and the author and Alim Sukhtayev have shown how the Maslov
index can be used to further extend such results to the 
full range of cases from limit-point to limit-circle 
\cite{HS2018, HS2021}. The primary motivation for the original 
development of \cite{GST1996} seems to have been the prospect 
of counting eigenvalues in gaps between bands of essential 
spectrum (such counts being problematic in the (non-renormalized) 
oscillation case). (See \cite{Simon2005} for an expository discussion.) 
The analyses described above
are all in the context of Hamiltonian systems for which 
the eigenvalues under investigation are discrete, 
possibly in a gap of essential spectrum. Renormalized 
oscillation theory has also been developed in some cases
for which nonlinear dependence on the spectral parameter
$\lambda$ leads to a generalized notion of eigenvalues
introduced in \cite{BKH2012} as {\it finite} eigenvalues. 
For the development in this setting (restricted to the 
Hamiltonian case), see \cite{Elyseeva2020}. 
Finally, we mention that the novel aspect of the current analysis 
is that it seems to be the first effort to extend 
renormalized oscillation results to the 
non-Hamiltonian setting.

{\it Plan of the paper}. In Section \ref{maslov-section}, 
we discuss the generalized Maslov index of \cite{BCCJM2022}, 
with an emphasis on properties that will be necessary for 
our analysis, and in Section \ref{rot-section} we 
discuss the application of renormalized oscillation theory
in the current setting. In Section \ref{proof-section} we prove
Theorem \ref{main-theorem}, and in Section \ref{invariance-section}
we develop a framework for checking the invariance assumption
of Theorem \ref{main-theorem} and computing the value 
$\mathfrak{m}$ in particular cases. In Section
\ref{applications-section}, we consider two families of 
examples, along with specific implementations for 
three particular equations.

\section{Properties of the Generalized Maslov Index}
\label{maslov-section}

In this section, we emphasize properties of the generalized 
Maslov index that will have a role in our analysis, leaving  
a full development of the theory to \cite{BCCJM2022}. 
In particular, a proper discussion of this object requires
some items from algebraic topology that are (1) already 
covered clearly and concisely in \cite{BCCJM2022}; 
and (2) not critical to the development of our results. 
Aside from an occasional clarifying comment for interested 
readers, these items are omitted from the current 
discussion. 

As in the introduction, we let $\mathpzc{g}: [a, b] \to Gr_n (\mathbb{R}^{2n})$ 
denote a continuous path of Grassmannian subspaces, and we let 
$\mathpzc{q} \in Gr_n (\mathbb{R}^{2n})$ denote a fixed {\it target}
subspace. We let $\omega_1$ denote a skew-symmetric $n$-linear map
such that $\ker \omega_1 = \mathpzc{q}$, and we let $\omega_2$ denote
a second skew-symmetric $n$-linear map so that the triple
$(\mathpzc{g} (\cdot), \omega_1, \omega_2)$ satisfies 
the invariance property described in Definition \ref{invariance-definition}
on the interval $[a, b]$ (i.e., $\mathpzc{g} (t) \in \mathcal{M}$
for all $t \in [a, b]$, where $\mathcal{M}$ is as in (\ref{hyperplane-ma})). 
(Here, we note that $\mathpzc{q}$ is not
needed in the triple notation, since $\mathpzc{q}$ is determined by $\omega_1$.) 
Recalling that our notational convention is to fix a choice of 
frames $\mathbf{G} (t)$ for $\mathpzc{g} (t)$ with columns 
$\{g_i (t)\}_{i=1}^n$, we set 
\begin{equation} \label{tilde-omega-again}
    \tilde{\omega}_i (t) 
    := \omega_i (g_1 (t), g_2 (t), \dots, g_n (t)),
    \quad i = 1, 2.
\end{equation}
I.e., $\omega_i$ will consistently denote a skew-symmetric $n$-linear map,
and $\tilde{\omega}_i$ will consistently denote the evaluation of 
$\omega_i$ along a particular path mapping $[a, b]$ to $Gr_n (\mathbb{R}^{2n})$.

The generalized Maslov index $\ind (\mathpzc{g} (\cdot), \mathpzc{q}; [a, b])$
is then computed as described in (\ref{extended-maslov}), with appropriate
conventions for counting arrivals and departures to and from the point 
in projective space $[0:1]$ (described below). In practice, we proceed
by tracking a point $p(t) \in S^1$, which can be precisely specified as 
\begin{equation} \label{p-specified}
p(t) = 
\begin{cases}
\Big( \frac{\tilde{\omega}_2 (t)}{\sqrt{\tilde{\omega}_1 (t)^2 + \tilde{\omega}_2 (t)^2}}, 
  \frac{\tilde{\omega}_1 (t)}{\sqrt{\tilde{\omega}_1 (t)^2 + \tilde{\omega}_2 (t)^2}} \Big) & \tilde{\omega}_2 (t) \le 0 \\
- \Big( \frac{\tilde{\omega}_2 (t)}{\sqrt{\tilde{\omega}_1 (t)^2 + \tilde{\omega}_2 (t)^2}}, 
   \frac{\tilde{\omega}_1 (t)}{\sqrt{\tilde{\omega}_1 (t)^2 + \tilde{\omega}_2 (t)^2}} \Big)  & \tilde{\omega}_2 (t) > 0.
\end{cases}
\end{equation}
In the usual way, we think of mapping $\mathbb{R}P^1$ to the left half
of the unit circle and then closing to $S^1$ by equating the points 
$(0,1)$ and $(0,-1)$. It's clear that $t_*$ is a crossing point of the flow
if and only if $p (t_*) = (-1, 0)$, so the generalized Maslov index is computed 
as a count of the number of times the point $p(t)$ crosses $(-1, 0)$. 
We take crossings in the clockwise direction to be negative and crossings
in the counterclockwise direction to be positive. Regarding
behavior at the endpoints, if $p(t)$
rotates away from $(-1,0)$ in the clockwise direction as $t$ increases
from $0$, then the generalized Maslov index decrements by 1, while if $p(t)$ 
rotates away from $(-1,0)$ in the counterclockwise direction as $t$ increases
from $0$, then the generalized Maslov index does not change. Likewise, 
if $p(t)$ rotates into $(-1,0)$ in the 
counterclockwise direction as $t$ increases
to $1$, then the generalized Maslov index increments by 1, while if 
$p(t)$ rotates into $(-1,0)$ in the clockwise direction as $t$ increases
to $1$, then the generalized Maslov index does not change. Finally, 
it's possible that $p(t)$ will arrive 
at $(-1,0)$ for $t = t_*$ and remain at $(-1,0)$ as $t$ traverses
an interval. In these cases, the generalized
Maslov index only increments/decrements upon arrival or 
departure, and the increments/decrements are determined 
as for the endpoints (departures determined as with $t=0$,
arrivals determined as with $t = 1$). 

\begin{remark} \label{bccjm-p}
In \cite{BCCJM2022}, the authors view $S^1$ as a circle in $\mathbb{C}$, 
and make the specification
\begin{equation*}
    p(t) = (\frac{\tilde{\omega}_1 (t) - i \tilde{\omega}_2 (t)}{|\tilde{\omega}_1 (t) - i \tilde{\omega}_2 (t)|})^2.
\end{equation*}
This choice leads to precisely the same dynamics as those described above, and 
in particular to the same values of the generalized Maslov index. 
\end{remark}

We emphasize, as in the introduction, that in contrast with the
Maslov index in the setting of Lagrangian flow, 
the generalized Maslov index does not keep track of the dimensions 
of the intersections. 

To set some notation, we let $\omega_1$ and $\omega_2$ be as 
above, and denote 
by $\mathcal{P}_{\omega_1, \omega_2} ([a, b])$ 
the collection of all continuous paths $\mathpzc{g}: [a, b] \to Gr_n (\mathbb{R}^{2n})$ 
that are invariant with respect to the skew-symmetric $n$-linear maps
$\omega_1$ and $\omega_2$. The generalized Maslov index of \cite{BCCJM2022} has the following
properties (see Proposition 3.8 in \cite{BCCJM2022}). 

\medskip
\noindent
{\bf (P1)} (Path Additivity) If $\mathpzc{g} \in \mathcal{P}_{\omega_1, \omega_2} ([a, b])$
and $\mathpzc{q} = \ker \omega_1$, then for any $\tilde{a}, \tilde{b}, \tilde{c} \in [a, b]$, 
with $\tilde{a} < \tilde{b} < \tilde{c}$, we have
\begin{equation*}
\ind (\mathpzc{g} (\cdot), \mathpzc{q}; [\tilde{a}, \tilde{c}]) 
= \ind (\mathpzc{g} (\cdot), \mathpzc{q};[\tilde{a}, \tilde{b}]) 
+ \ind (\mathpzc{g} (\cdot), \mathpzc{q}; [\tilde{b}, \tilde{c}]).
\end{equation*}

\medskip
\noindent
{\bf (P2)} (Homotopy Invariance) If 
$\mathpzc{g}, \mathpzc{h} \in \mathcal{P}_{\omega_1, \omega_2} ([a, b])$ 
are homotopic in $\mathcal{M}$ with $\mathpzc{g} (a) = \mathpzc{h} (a)$ and  
$\mathpzc{g} (b) = \mathpzc{h} (b)$ (i.e., if $\mathpzc{g}, \mathpzc{h}$
are homotopic with fixed endpoints) then 
\begin{equation*}
\ind (\mathpzc{g} (\cdot), \mathpzc{q}; [a, b]) = \ind (\mathpzc{h} (\cdot), \mathpzc{q}; [a, b]).
\end{equation*}

\subsection{Direction of Rotation}
\label{direction-section}

One of the advantages of the renormalized oscillation approach
in the linear Hamiltonian setting is that it often leads to monotoncity 
in the calculation of the Maslov index as the independent variable
varies \cite{HS2018, HS2021}. In order to show that the same 
advantage can be obtained in the non-Hamiltonian setting, we
employ the approach of Section 4 in \cite{BCCJM2022} to analyze
the direction of flow. For this, 
our starting point is the observation that for $p(t)$ near $(-1,0)$,
the location of $p(t)$ can be tracked via the angle 
\begin{equation} \label{p-angle}
    \theta(t) = \pi + \tan^{-1} \frac{\tilde{\omega}_1 (t)}{\tilde{\omega}_2 (t)},
\end{equation}
with $\pi$ arising from our convention of placing crossings at $(-1,0)$.
By the monotonicity of $\tan^{-1} x$, the direction of $\theta(t)$ near
a value $t=t_*$ for which $\theta (t_*) = \pi$ is determined by 
the derivative of the ratio $r(t) = \frac{\tilde{\omega}_1 (t)}{\tilde{\omega}_2 (t)}$,
for which $r(t_*) = 0$. Precisely, if 
$r'(t_*) = \frac{\tilde{\omega}'_1 (t_*)}{\tilde{\omega}_2 (t_*)} < 0$
then the rotation of $p(t)$ is clockwise at $t_*$, while if $r'(t_*) > 0$ then 
the rotation is counterclockwise.

\subsection{Invariance and the Computation of $\mathfrak{m}$}
\label{invariance-subsection}

Given a triple $(\mathpzc{g} (\cdot), \omega_1, \omega_2)$, we would like
to be able to check the invariance property of Definition \ref{invariance-definition}
on a given interval $[a, b]$. One strategy for this, employed in \cite{BCCJM2022},
is to show that the quantity $\tilde{\omega}_1 (t)^2 + \tilde{\omega}_2 (t)^2$ is non-zero 
at $t = a$ and to verify by computing its rate of change that it cannot
become 0 at any $t \in [a,b]$. More precisely, the authors of 
\cite{BCCJM2022} introduce scaled variables 
\begin{equation} \label{scaled-variables}
    \psi_1 (t) := \frac{\tilde{\omega}_1 (t)}{d(t)};
    \quad \textrm{and} \quad 
    \psi_1 (t) := \frac{\tilde{\omega}_2 (t)}{d(t)},
\end{equation}
where 
\begin{equation} \label{d-defined}
d (t) = |g_1 (t) \wedge g_2 (t) \wedge \dots \wedge g_n (t)|
= \sqrt{\det \mathbb{G} (t)},
\end{equation}
with $\mathbb{G} (t)$ denoting the Gram matrix; i.e., the matrix
with entries
$(\mathbb{G} (t))_{ij} = (g_i (t), g_j (t))$.  (Here, 
$(\cdot, \cdot)$ denotes the usual Euclidean inner product.)
If we then set 
\begin{equation} \label{rho-defined}
    \rho (t) = \frac{1}{2} (\psi_1 (t)^2 + \psi_2 (t)^2),
\end{equation}
we can proceed similarly as described above, checking that
$\rho (0) > 0$ and verifying that $\rho' (t)$ is sufficiently 
bounded below so that in fact $\rho (t)$ is bounded away from 0
for all $t \in [a, b]$. This calculation clearly depends
critically on the choices of $\tilde{\omega}_1 (t)$ and 
$\tilde{\omega}_2 (t)$.
Details in the setting of our analysis of (\ref{nonhammy}) 
are carried out in Section \ref{invariance-section}. 

\begin{remark} \label{coordinate-free}
An advantage of the variables $\psi_1 (t)$ and $\psi_2 (t)$
from (\ref{scaled-variables}) is that they are invariant 
(up to a possible change of sign) under coordinate 
transformations. Precisely, if $\omega$ denotes
any skew-symmetric $n$-linear map, then the evaluation 
of $\omega$ on the columns of $\mathbf{G}$ (i.e., on the 
basis elements $\{g_i\}_{i=1}^n$ for $\mathpzc{g}$) and the 
evaluation of $\omega$ on the columns of $\mathbf{G} M$ 
for some invertible $n \times n$ matrix $M$ (i.e., on a new basis for 
$\mathpzc{g}$) are related by 
\begin{equation*}
    \omega((\mathbf{G}M)_1, (\mathbf{G}M)_2, \dots, (\mathbf{G}M)_2)
    = (\det M) \omega(g_1, g_2, \dots, g_n).
\end{equation*}
Likewise, 
\begin{equation*}
    |(\mathbf{G}M)_1 \wedge (\mathbf{G}M)_2 \wedge \dots \wedge (\mathbf{G}M)_n|
    = |\det M| |g_1 \wedge g_2 \wedge \dots \wedge g_n|.
\end{equation*}
Combining these observations, we see that if we set 
\begin{equation*}
    \Psi (g_1, g_2, \dots, g_n)
    := \frac{\omega (g_1, g_2, \dots, g_n)}{|g_1 \wedge g_2 \wedge \dots \wedge g_n|},
\end{equation*}
then 
\begin{equation*}
    \Psi((\mathbf{G}M)_1, (\mathbf{G}M)_2, \dots, (\mathbf{G}M)_2)
    = \frac{\det M}{|\det M|} \Psi (g_1, g_2, \dots, g_n).
\end{equation*}
\end{remark}

More generally, suppose $\mathpzc{g}: [a, b] \times [c, d] \to Gr_n (\mathbb{R}^{2n})$
is a continuous map, and for some $\mathpzc{q} \in Gr_n (\mathbb{R}^{2n})$
let $\omega_1$ be as in (\ref{omega1}), with also $\omega_2$ denoting any 
skew-symmetric $n$-linear map with $\ker \omega_2 \ne \mathpzc{q}$. 
In the current generalized setting, it may be the case that the triple 
$(\mathpzc{g} (\cdot, \cdot), \omega_1, \omega_2)$ is invariant on 
the boundary of $[a, b] \times [c, d]$, but not on the entirety of 
its interior. In this case, the generalized Maslov index computed
along the boundary of $[a, b] \times [c, d]$ is well-defined, and as 
in the introduction we denote it $\mathfrak{m}$. 

In \cite{BCCJM2022}, the authors introduce a method that in some cases can 
be used to compute $\mathfrak{m}$ from local information in the interior 
of $[a, b] \times [c, d]$. For rigorous statements, the interested reader is 
referred to Lemmas 4.9 and 4.10 in \cite{BCCJM2022}, but the main ideas
are as follows. Suppose the triple 
$(\mathpzc{g} (\cdot, \cdot), \omega_1, \omega_2)$ loses invariance at 
a point $(s_*, t_*)$ in the interior of $[a, b] \times [c, d]$, so that
in particular we have both $\tilde{\omega}_1 (s_*, t_*) = 0$ and 
$\tilde{\omega}_2 (s_*, t_*) = 0$. In addition, suppose the point 
$(s_*, t_*)$ lies on a spectral curve that can be expressed near 
$(s_*, t_*)$ as a function $s (t)$: i.e., $s(t)$ satisfies 
$\tilde{\omega}_1 (s(t), t) = 0$ for $t$ sufficiently close to 
$t_*$, and also $s (t_*) = s_*$. Upon differentiating the 
relation $\tilde{\omega}_1 (s(t), t) = 0$ with respect to $t$
(in cases in which $s(t)$ is sufficiently smooth to allow it),
we find 
\begin{equation*}
    \frac{\partial \tilde{\omega}_1}{\partial s} (s_*, t_*) s' (t_*)
    + \frac{\partial \tilde{\omega}_1}{\partial t} (s_*, t_*)  = 0.
\end{equation*}
In certain cases, arising both in \cite{BCCJM2022} and the current 
analysis, we have additionally that 
$\frac{\partial \tilde{\omega}_1}{\partial t} (s_*, t_*) = 0$,
and in such cases points $(s_*, t_*)$ at which invariance is lost
can be characterized by the condition that either 
$\frac{\partial \tilde{\omega}_1}{\partial s} (s_*, t_*) = 0$
or $s' (t_*) = 0$ (or both). We will see an illustration of this 
dynamic in Section \ref{example3-section}. 

In order to understand the second observation from \cite{BCCJM2022}
regarding points $(s_*, t_*)$ at which invariance is lost, we observe
that in some cases, again arising both in \cite{BCCJM2022} and the 
current analysis, the flow associated with the generalized Maslov
index will be monotonic on horizontal lines (as in the case of 
\cite{BCCJM2022}) or vertical lines (as in the current setting). 
(This difference between \cite{BCCJM2022} and the current analysis 
is entirely artificial, depending only on different choices of 
orientation of the axes.) For specificity of this discussion, 
we will focus on the case in which the flow is monotonically 
positive on vertical axes as $t$ increases. In this setting, 
suppose $(s_*, t_*)$ is a point in the interior of 
$[a, b] \times [c, d]$ at which invariance fails. Then by 
a homotopy argument we can determine the contribution associated
with the point to the value $\mathfrak{m}$ by considering a sufficiently
small box enclosing $(s_*, t_*)$ and not enclosing any other points
at which invariance is lost (under the assumption that the points
at which invariance is lost form a discrete set). Moreover, we can 
think of selecting boxes sufficiently narrow in the $s$-direction
so that any spectral curves passing through $(s_*, t_*)$ 
necessarily enter and exit the small box through its vertical 
sides (see Figure \ref{invariance-figure}). 
In this way, the contribution associated
with $(s_*, t_*)$ to $\mathfrak{m}$ is entirely determined by 
the manner in which the spectral curves passing through 
$(s_*, t_*)$ cross the vertical shelves of this box. Precisely,
the analogue to Lemma 2.10 in \cite{BCCJM2022} in our setting 
can be loosely stated as follows: if we let $i_-$ denote the 
number of spectral curves that strictly increase as $t$ 
increases {\it to} $t_*$ (i.e., $s(t)$ strictly increases as $t$ 
increases to $t_*$), and we let $i_+$ denote the 
number of spectral curves that strictly increase as $t$ 
increases {\it from} $t_*$, and in addition we assume that all curves
are strictly monotonic as $t$ increases to/from $t_*$, 
then the contribution to $\mathfrak{m}$
associated with $(s_*, t_*)$ will be $2 (i_+ - i_-)$. 
 
\begin{figure}[ht]
\begin{center}
\begin{tikzpicture}

\draw[thick, ->] (-10, -3) -- (-8, -3); 
\draw[thick, ->] (-9.8, -3.2) -- (-9.8, -1.2); 
\node at (-8.1, -3.3) {$s$};
\node at (-10.1, -1.3) {$t$};

\draw[thick, ->] (-6, -2) -- (-5, -2);
\draw[thick] (-5, -2) -- (-4, -2);
\draw[thick, ->] (-4, -2) -- (-4, 0);
\draw[thick] (-4, 0) -- (-4, 2);
\draw[thick, ->] (-4, 2) -- (-5, 2);
\draw[thick] (-5, 2) -- (-6, 2);
\draw[thick, ->] (-6, 2) -- (-6, 0);
\draw[thick] (-6, 0) -- (-6, -2);

\draw[thick] (-7, -1) .. controls (-4.2, 0) .. (-7, 1);
\filldraw[black] (-4.9,0) circle (2pt);
\node at (-5,-1) {$(s_*, t_*)$};
\node at (-5, 2.6) {$i_- = 1, i_+ = 0$};
\node at (-5,-2.6) {$\ind = -2$};

\draw[thick, ->] (-2, -2) -- (-1, -2);
\draw[thick] (-1, -2) -- (0, -2);
\draw[thick, ->] (0, -2) -- (0, 0);
\draw[thick] (0, 0) -- (0, 2);
\draw[thick, ->] (0, 2) -- (-1, 2);
\draw[thick] (-1, 2) -- (-2, 2);
\draw[thick, ->] (-2, 2) -- (-2, 0);
\draw[thick] (-2, 0) -- (-2, -2);

\draw[thick] (1, -1) .. controls (-1.8, 0) .. (1, 1);
\filldraw[black] (-1.1,0) circle (2pt);
\node at (-1,-1) {$(s_*, t_*)$};
\node at (-1, 2.6) {$i_- = 0, i_+ = 1$};
\node at (-1,-2.6) {$\ind = 2$};

\draw[thick, ->] (2, -2) -- (3, -2);
\draw[thick] (3, -2) -- (4, -2);
\draw[thick, ->] (4, -2) -- (4, 0);
\draw[thick] (4, 0) -- (4, 2);
\draw[thick, ->] (4, 2) -- (3, 2);
\draw[thick] (3, 2) -- (2, 2);
\draw[thick, ->] (2, 2) -- (2, 0);
\draw[thick] (2, 0) -- (2, -2);

\draw[thick] (1, -1.5) -- (5, 1.5);
\filldraw[black] (3,0) circle (2pt);
\node at (3,-1) {$(s_*, t_*)$};
\node at (3, 2.6) {$i_- = 1, i_+ = 1$};
\node at (3,-2.6) {$\ind = 0$};

\end{tikzpicture}
\end{center}
\caption{Local contributions to $\mathfrak{m}$.} \label{invariance-figure}
\end{figure}
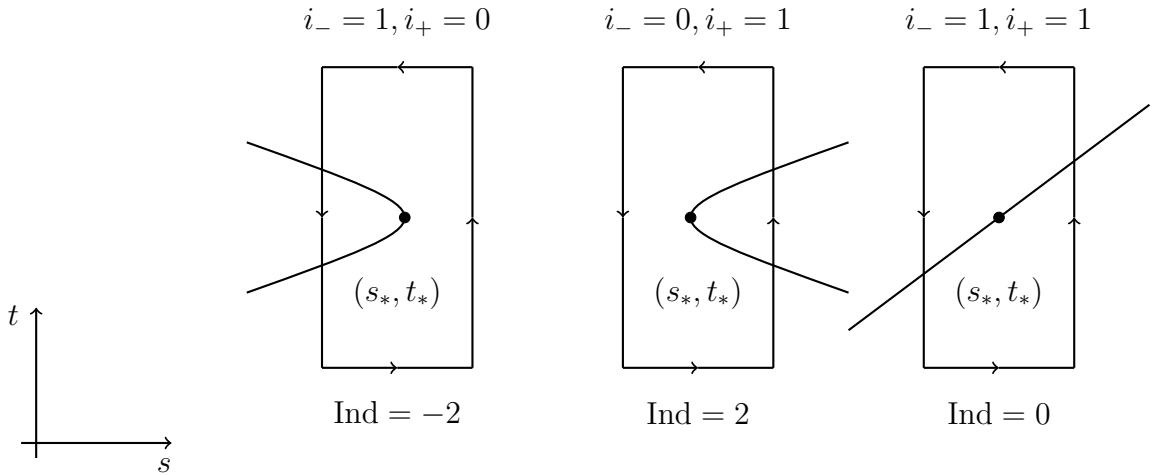 

\section{Oscillation Theory and Renormalized Oscillation Theory} 
\label{rot-section}

In \cite{BCCJM2022}, the authors use their generalized Maslov index to establish 
an oscillation result for systems (\ref{nonhammy}) arising from reaction-diffusion
systems 
\begin{equation} \label{reaction-diffusion}
    u_t = Bu_{xx} + F(u), 
    \quad u(x; t) \in \mathbb{R}^d,
\end{equation}
for which $F$ does not have a gradient structure (i.e., $F$ cannot be expressed
as the gradient of some map $\mathcal{F}: \mathbb{R}^n \to \mathbb{R}$). 
Similarly as with our discussion of (\ref{vcl-eqn}), we can naturally 
associate (\ref{reaction-diffusion}) with the eigenvalue problem 
\begin{equation} \label{reaction-diffusion-evp}
B \phi'' + DF(\bar{u} (x)) \phi = \lambda \phi,
\end{equation}
where $\bar{u} (x)$ denotes a stationary solution to (\ref{reaction-diffusion}). 
Equation (\ref{reaction-diffusion-evp}) can be expressed as (\ref{nonhammy}) with 
\begin{equation} \label{bccjm-matrix}
    A (x; \lambda) 
    = \begin{pmatrix}
        0 & B^{-1} \\
        \lambda I - DF (\bar{u} (x)) & 0 \\
    \end{pmatrix}.
\end{equation}

In order to compare the current approach with that of \cite{BCCJM2022}, we briefly 
summarize the main oscillation theorem from that reference (Theorem 4.1 in \cite{BCCJM2022}).
Considering (\ref{nonhammy}) on the interval $[0, L]$ for some $L > 0$,
with $A (x; \lambda)$ as specified in 
(\ref{bccjm-matrix}), the authors take boundary conditions at the right to be 
Dirichlet, and boundary conditions at the left to be either Dirichlet or 
Robin, where by Robin boundary conditions the authors mean that the space
$\mathpzc{p} \in Gr_n (\mathbb{R}^{2n})$ in (\ref{bc}) has a frame 
${I \choose \Phi}$, where $\Phi$ denotes any $n \times n$ matrix 
with real-valued entries. In order to express this result in the current 
framework and notation, we let $\mathbf{G} (x; \lambda)$ denote a 
$2n \times n$ matrix-valued solution of (\ref{nonhammy})-(\ref{bccjm-matrix}) 
such that $\mathbf{G} (0; \lambda) = \mathbf{P}$ (a frame for $\mathpzc{p}$),
and we let $\mathpzc{g} (x; \lambda)$ denote the evolving subspace with 
frame $\mathbf{G} (x; \lambda)$. With $\mathpzc{q}$ denoting the 
Dirichlet subspace, we specify $\omega_1$ so that $\ker \omega_1 = \mathpzc{q}$,
and set 
\begin{equation} \label{bccjm-omega2}
\omega_2 (g_1, \dots, g_n) 
:= \sum_{j=1}^n \omega_1 (g_1, \dots, A (x; \lambda) g_j, \dots, g_n).      
\end{equation}
Due to the particular form of $A(x; \lambda)$, $\omega_2$ does not 
explicitly depend on either $x$ or $\lambda$. For values $\delta > 0$
sufficiently small and $\lambda_{\infty} > 0$ sufficiently large, 
the authors of \cite{BCCJM2022} assume the triple 
$(\mathpzc{g} (\cdot; \cdot), \omega_1, \omega_2)$ is invariant on the 
boundary of the set $[\delta, L] \times [0, \lambda_{\infty}]$. Under these 
assumptions, the authors are able to conclude that 
\begin{equation*}
    \ind (\mathpzc{g} (L; \cdot), \mathpzc{q}; [0, \lambda_{\infty}])
    = \ind (\mathpzc{g} (\cdot; 0), \mathpzc{q}; [\delta, L]) - \mathfrak{m}. 
\end{equation*}
Here, the index on the left-hand side is a signed count of the number of 
eigenvalues that (\ref{nonhammy})-(\ref{bccjm-matrix}) (with the specified boundary conditions) 
has on the interval 
$[0, \lambda_{\infty}]$, and so cannot exceed a direct count of these eigenvalues; 
i.e., it must be the case that 
\begin{equation*}
    \mathcal{N}_{\#} ([0, \lambda_{\infty}])
    \ge |\ind (\mathpzc{g} (L; \cdot), \mathpzc{q}; [0, \lambda_{\infty}])|. 
\end{equation*}
In addition, the authors' choice of $\omega_2$, given here in (\ref{bccjm-omega2})
ensures that all crossing points for $\ind (\mathpzc{g} (\cdot; 0), \mathpzc{q}; [\delta, L])$
are positively directed, so that 
\begin{equation*}
    \ind (\mathpzc{g} (\cdot; 0), \mathpzc{q}; [\delta, L])
    = \# \{x \in (\delta, L]: \mathpzc{g} (x; 0) \cap \mathpzc{q} \ne \{0\} \}, 
\end{equation*}
where the count on the right-hand side is taken without multiplicity. 
Finally, the value $\lambda_{\infty}$ is taken large enough so that 
(\ref{nonhammy})-(\ref{bccjm-matrix}) (with the specified boundary conditions)
has no eigenvalues on the interval $[\lambda_{\infty}, \infty)$, and 
the value $\delta > 0$ is chosen sufficiently small so that 
\begin{equation*}
    \mathpzc{g} (x; 0) \cap \mathpzc{q} = \{0\},
    \quad \forall \, x \in (0, \delta). 
\end{equation*}
In this way, the conclusion of Theorem 4.1 of \cite{BCCJM2022} can be expressed
as 
\begin{equation*}
\mathcal{N}_{\#} ([0, \infty))
\ge | \# \{x \in (0, L]: \mathpzc{g} (x; 0) \cap \mathpzc{q} \ne \{0\} \} + \mathfrak{m}|.
\end{equation*}

This result is a natural generalization of standard oscillation results
for Sturm-Liouville systems, for which it's well known that in the 
case of a Dirichlet boundary condition on the right-hand side 
all crossing points as the independent variable increases 
will have the same sign. (See, e.g,. 
\cite{Arnold1985, BOPW2020, CB2015, CJLS2016, DDP2008, DJ2011, 
Edwards1964, Howard2021, HS2016, SB2012}). On the other hand, in 
both the Hamiltonian and non-Hamiltonian settings, if the target 
space is not Dirichlet then such monotonicity is not assured.   
As shown in \cite{HS2018, HS2021}, renormalized oscillation 
theory in the case of linear Hamiltonian systems leads naturally 
to a Maslov index for which all crossings as the independent variable
increases have the same sign, 
and so it's natural to ask if the same holds true in the current
non-Hamiltonian setting. The primary observation of 
Theorem \ref{main-theorem} in the current analysis is that it does.

\section{Proof of Theorem \ref{main-theorem}}
\label{proof-section}

We begin by fixing $\lambda_1, \lambda_2 \in I$, $\lambda_1 < \lambda_2$,
and letting $\mathbf{G} (x; \lambda)$ and $\mathbf{H} (x; \lambda_2)$
respectively denote the frames specified in (\ref{g-frame})
and (\ref{h-frame}), noting that $\mathbf{H}$ is evaluated at 
the fixed value $\lambda_2$. If $\mathbf{F} (x; \lambda)$ is 
specified as in (\ref{f-frame}) then $\mathbf{F} (x; \lambda)$
is a matrix solution to the ODE 
\begin{equation} \label{f-equation}
    \mathbf{F}' = \mathbb{A} (x; \lambda, \lambda_2) \mathbf{F},
    \quad  \mathbb{A} (x; \lambda, \lambda_2) 
    := \begin{pmatrix}
        A (x; \lambda) & 0_{n \times n} \\
        0_{n \times n} & A (x; \lambda_2)
    \end{pmatrix},
\end{equation}
though not to any particular initial value problem since $\mathbf{G} (x; \lambda)$
is initialized at $x = 0$ and $\mathbf{H} (x; \lambda_2)$ is initialized 
at $x=1$. Here, for each $(x, \lambda) \in [0,1] \times I$,
$\mathbf{F} (x; \lambda) \in \mathbb{R}^{2n \times n}$ is a 
frame for a subspace $\mathpzc{f} (x; \lambda) \in Gr_n (\mathbb{R}^{2n})$,
allowing us to compute the generalized Maslov index for the pair 
$\mathpzc{g} (x; \lambda)$ and $\mathpzc{h} (x; \lambda_2)$ by 
computing the generalized Maslov index for $\mathpzc{f} (x; \lambda)$
with target $\tilde{\mathbf{\Delta}} = {-I_n \choose I_n}$. 
(The frame $\mathbf{F} (x; \lambda)$ also depends on $\lambda_2$, 
but $\lambda_2$ remains fixed throughout the analysis, so this dependence
is suppressed.) 

As discussed in the introduction, we define the skew-symmetric $n$-linear map
\begin{equation} \label{omega1-form}
    \omega_1 (f_1, f_2, \dots, f_n)
    := \det (\mathbf{F} \,\,\, \tilde{\mathbf{\Delta}}),
    \quad \mathbf{F} = (f_1, f_2, \dots, f_n),
\end{equation}
and recalling our convention described in (\ref{tilde-omega-again}), the associated function 
\begin{equation} \label{tilde-omega1}
\begin{aligned}
    \tilde{\omega}_1 (x; \lambda) 
    &:= \det (\mathbf{F} (x; \lambda) \,\,\, \tilde{\mathbf{\Delta}}) \\
    &= \det \begin{pmatrix}
         \mathbf{G} (x; \lambda) & \mathbf{0}_{n \times (n-m)} & - I_{n} \\
         \mathbf{0}_{n \times m} & \mathbf{H} (x; \lambda_2) & I_{n}
    \end{pmatrix}
    = \det (\mathbf{G} (x; \lambda) \,\,\, \mathbf{H} (x; \lambda_2)).
\end{aligned}
\end{equation}
Next, we let $\omega_2$ denote any skew-symmetric $n$-linear map with 
$\ker \omega_2 \ne \tilde{\Delta}$ (though see Remark \ref{main-theorem-remark}), 
and we set 
\begin{equation} \label{tilde-omega2-defined}
    \tilde{\omega}_2 (x; \lambda)
    := \omega_2 (f_1 (x; \lambda), f_2 (x; \lambda), \dots, f_n (x; \lambda)). 
\end{equation}

\subsection{Proof of Theorem \ref{main-theorem}: First Claim}
\label{first-claim-section}

We will establish the first part of Theorem \ref{main-theorem}
by computing the generalized Maslov index for the pair $\mathpzc{g} (x; \lambda)$
and $\mathpzc{h} (x; \lambda_2)$ along the following sequence of contours,
often referred to as the {\it Maslov box}: (1) fix $x = 0$ and let $\lambda$
increase from $\lambda_1$ to $\lambda_2$ (the {\it bottom shelf}); (2) fix
$\lambda = \lambda_2$ and let $x$ increase from 0 to 1 (the {\it right shelf});
(3) fix $x = 1$ and let $\lambda$ decrease from $\lambda_2$ to $\lambda_1$
(the {\it top shelf}); and (4) fix $\lambda = \lambda_1$ and let $x$ decrease 
from 1 to 0 (the {\it left shelf}). See Figure \ref{box-figure}.

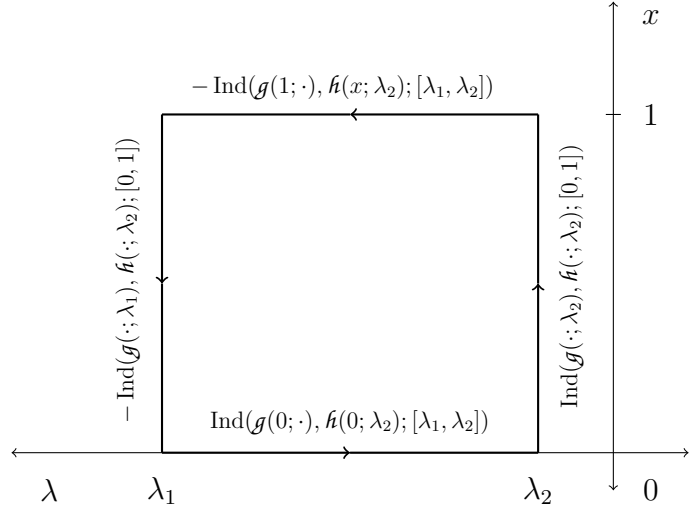
\begin{figure}[ht]
\begin{center}
\begin{tikzpicture}
\draw[<->] (-8,-1.5) -- (1,-1.5);	
\draw[<->] (0,-2) -- (0,4.5);	
\node at (.5,4.3) {$x$};
\node at (-7.5,-2) {$\lambda$};
\node at (-6,-2) {$\lambda_1$};
\node at (-1,-2) {$\lambda_2$};
\node at (.5,3) {$1$};
\draw[-] (-.1,3) -- (.1,3);
\node at (.5,-2) {$0$};
%
\draw[thick, ->] (-6,-1.5) -- (-3.5,-1.5);
\draw[thick] (-3.5,-1.5) -- (-1,-1.5);	
\draw[thick, ->] (-1,-1.5) -- (-1,.75);
\draw[thick] (-1,.75) -- (-1, 3);
\draw[thick,->] (-1,3) -- (-3.5, 3);
\draw[thick] (-3.5,3) -- (-6, 3);
\draw[thick,->] (-6,3) -- (-6, .75);
\draw[thick] (-6,.75) -- (-6, -1.5);
%
\node[scale = .75] at (-3.5, -1.1) {$\ind (\mathpzc{g} (0; \cdot), \mathpzc{h} (0; \lambda_2); [\lambda_1, \lambda_2])$};
\node[scale = .75, rotate=90] at (-.55, .85) {$\ind (\mathpzc{g} (\cdot; \lambda_2), \mathpzc{h} (\cdot; \lambda_2); [0, 1])$};
\node[scale = .75] at (-3.6, 3.35) {$- \ind (\mathpzc{g} (1; \cdot), \mathpzc{h} (x; \lambda_2); [\lambda_1, \lambda_2])$};
\node[scale = .75, rotate=90] at (-6.45, .8) {$- \ind (\mathpzc{g} (\cdot; \lambda_1), \mathpzc{h} (\cdot; \lambda_2); [0, 1])$};
\end{tikzpicture}
\end{center}
\caption{The Maslov Box.} \label{box-figure}
\end{figure}

{\it The right shelf}. We begin with the right shelf, observing that
for any $x \in [0, 1]$, $\omega_1 (x; \lambda_2)$ will be zero  
if and only if $\lambda_2$ is an eigenvalue of (\ref{nonhammy})
(because $\omega_1 (x; \lambda_2)$ will be zero if and only if 
$\mathpzc{g} (x; \lambda_2)$ and $\mathpzc{h} (x; \lambda_2)$
intersect non-trivially). If $\lambda_2$ is not an eigenvalue 
of (\ref{nonhammy}) then there can be no crossings along the right
shelf, and so trivially 
\begin{equation} \label{right-maslov}
    \ind (\mathpzc{g} (\cdot; \lambda_2), \mathpzc{h} (\cdot; \lambda_2); [0, 1]) = 0.
\end{equation}
On the other hand, if $\lambda_2$ is an eigenvalue of (\ref{nonhammy})
then every point on the right shelf is a crossing point. Since the Maslov
index only increases or decreases at arrivals and departures, this 
means that in fact (\ref{right-maslov}) holds in this case as well. We 
emphasize here that by our assumption of invariance along the boundary 
of the Maslov box, 
if $\lambda_2$ is an eigenvalue of (\ref{nonhammy}) so that 
$\omega_1 (x; \lambda_2) = 0$ for all $x \in [0, 1]$, 
then it must be the case that $\omega_2 (x; \lambda_2) \ne 0$
for all $x \in [0, 1]$. 

{\it The bottom shelf}. For the bottom shelf, $\mathbf{G} (0; \lambda) = \mathbf{P}$
for all $\lambda \in [\lambda_1, \lambda_2]$, so $\omega_1 (0; \lambda)$ and 
$\omega_2 (0; \lambda)$ do not vary with $\lambda$. In particular, 
$\omega_1 (0; \lambda)$ and $\omega_2 (0; \lambda)$ can both be evaluated 
at $\lambda = \lambda_2$ for all $\lambda \in [\lambda_1, \lambda_2]$, 
and in this way we see, as in our discussion of the right shelf, that 
if $\lambda_2$ is not an eigenvalue of (\ref{nonhammy}) then no point
on the bottom shelf is a crossing point, while if $\lambda_2$ is an 
eigenvalue of (\ref{nonhammy}) then every point on the bottom shelf is 
a crossing point. In either case, 
\begin{equation*}
    \ind (\mathpzc{g} (0; \cdot), \mathpzc{h} (0; \lambda_2); [\lambda_1, \lambda_2]) = 0.
\end{equation*}

{\it The top shelf}. Each crossing point along the top shelf corresponds with 
an eigenvalue of (\ref{nonhammy}), counted with direction, but not with 
multiplicity. Some crossing points may be positively directed while others are negatively 
directed, so there may be cancellation among these, leading to a value of 
the generalized Maslov index below (never above) the total number of eigenvalues. If 
we let $\mathcal{N}_{\#} ([\lambda_1, \lambda_2])$ denote the total number of 
eigenvalues that (\ref{nonhammy}) has on $[\lambda_1, \lambda_2]$, counted 
without multiplicity, then 
\begin{equation} \label{count-inequality}
    \mathcal{N}_{\#} ([\lambda_1, \lambda_2]) 
    \ge | \ind (\mathpzc{g} (1; \cdot), \mathpzc{h} (1; \lambda_2); [\lambda_1, \lambda_2]) |.
\end{equation}
As discussed in the introduction, we allow for the possibility that 
the left-hand side of (\ref{count-inequality}) is $+ \infty$, in which case 
we take (\ref{count-inequality}) to hold trivially, regardless of the value
of the right-hand side (which cannot be infinite by compactness of $[\lambda_1, \lambda_2]$,
and the observation that the point $p(1; \lambda) \in S^1$ that we track in computing
the generalized Maslov index must complete a full loop of $S^1$ before adding a
contribution to the generalized Maslov index with the same sign as the previous
contribution). 

{\it The left shelf}. For the first part of Theorem \ref{main-theorem}, the 
generalized Maslov 
index along the left shelf appears precisely in the original form 
\begin{equation*}
    \ind (\mathpzc{g} (\cdot; \lambda_1), \mathpzc{h} (\cdot; \lambda_2); [0, 1]),
\end{equation*}
so nothing additional is required until we turn to monotonicity in Section \ref{monotonicity-section} 
below.

Using path additivity, we can compute the generalized Maslov index along all four shelves of
the Maslov box to obtain the sum 
\begin{equation*}
    \mathfrak{m} = - \ind (\mathpzc{g} (1; \cdot), \mathpzc{h} (1; \lambda_2); [\lambda_1, \lambda_2])
    - \ind (\mathpzc{g} (\cdot; \lambda_1), \mathpzc{h} (\cdot; \lambda_2); [0, 1]),
\end{equation*}
Upon combining this relation with (\ref{count-inequality}), we immediately 
obtain the first claim of Theorem \ref{main-theorem},
\begin{equation*} 
    \mathcal{N}_{\#} ([\lambda_1, \lambda_2]) 
    \ge | \ind (\mathpzc{g} (\cdot; \lambda_1), \mathpzc{h} (\cdot; \lambda_2); [0, 1]) + \mathfrak{m}|.
\end{equation*}

\subsection{Proof of Theorem \ref{main-theorem}: Monotonicity}
\label{monotonicity-section}

Using the development of Section \ref{direction-section}, we see 
that the direction associated with a crossing point $x_*$ on the 
left shelf of the Maslov box is determined by the sign of 
$\frac{\tilde{\omega}_1' (x_*; \lambda_1)}{\tilde{\omega}_2 (x_*; \lambda_1)}$. 
Following the strategy of \cite{BCCJM2022}, we can ensure 
monotonicity of crossings by using our freedom with 
$\omega_2$ to choose it in such a way that $\tilde{\omega}_1' (x_*; \lambda_1)$
and $\tilde{\omega}_2 (x_*; \lambda_1)$ have the same sign for each 
crossing point $x_* \in [0, 1]$. As a starting point toward making such a selection,
we observe that for any $\lambda \in [\lambda_1, \lambda_2]$,
we have the relation
\begin{equation} \label{suppressed}
        \partial_x \omega_1 (f_1 (x; \lambda), \dots, f_n (x; \lambda)) 
        = \partial_x \det (g_1 (x; \lambda), \dots, g_m (x; \lambda), h_1 (x; \lambda_2), \dots, h_{n-m} (x; \lambda_2)).
\end{equation}

\begin{remark} \label{determinant-convention}
Here, and in subsequent calculations, notation such as 
\begin{equation*}
    \det (f_1 (x; \lambda), \dots, f_n (x; \lambda))
\end{equation*}
will indicate the determinant of the matrix comprising the vectors
$\{f_i (x; \lambda)\}_{i=1}^n$ as its columns in the indicated order. 
\end{remark}

Our approach to calculating derivatives of determinants of 
$n \times n$ matrices will primarily be to 
sum the $n$ terms obtained by putting a derivative on each of the $n$ 
different rows. For notational convenience we will write 
\begin{equation*}
\partial_x \omega_1 (f_1 (x; \lambda), \dots, f_n (x; \lambda)) 
= \sum_{i = 1}^n D_i (x; \lambda, \lambda_2),
\end{equation*}
where $D_i (x; \lambda, \lambda_2)$ is the determinant of the 
matrix obtained by replacing the $i^\textrm{th}$ row of 
$(\mathbf{G} (x; \lambda) \,\, \mathbf{H} (x; \lambda_2))$
with the associated row of derivatives (in $x$),
\begin{equation*}
      \begin{pmatrix}
        g_{i1}' (x; \lambda) & \cdots & g_{im}' (x; \lambda) & h_{i1}' (x; \lambda_2) & \cdots & h_{i(n-m)}' (x; \lambda_2) 
      \end{pmatrix}.
\end{equation*}

For calculations of this type, we will make use of the relations 
\begin{equation} \label{standard-relations}
\begin{aligned}
    g_{ij}' (x; \lambda) &= a_{ik} (x; \lambda) g_{kj} (x; \lambda) \\
    h_{ij}' (x; \lambda_2) &= a_{ik} (x; \lambda_2) g_{kj} (x; \lambda_2), 
\end{aligned}
\end{equation}
where we've streamlined notation slightly by assuming summation over the 
repeated index $k$. This allows us to replace the $i^\textrm{th}$ row of 
$(\mathbf{G} (x; \lambda) \,\, \mathbf{H} (x; \lambda_2))$ with 
\begin{equation} \label{Di-continued1}
        \begin{pmatrix}
        a_{ik} g_{k1} (x; \lambda) & \cdots & a_{ik} g_{km} (x; \lambda) 
        & a_{ik} h_{k1} (x; \lambda_2) & \cdots & a_{ik} h_{k(n-m)} (x; \lambda_2) 
        \end{pmatrix}.
\end{equation}
where we've made the additional reduction of notation $a_{ik} (x; \lambda) g_{k1} (x \lambda) = a_{ik} g_{k1} (x; \lambda)$,
and similarly for the other sums.
We can now use row operations to eliminate from column $j$, $j = 1, 2, \dots, m$, the 
sums $\sum_{k\ne i} a_{ik} (x; \lambda) g_{kj} (x; \lambda)$. For the remaining 
columns $j = m+1, \dots, n$ these row operations will lead to difference expressions, 
and combining these observations we can express 
$D_i (x; \lambda, \lambda_2)$ as the determinant of the 
matrix obtained by replacing the $i^\textrm{th}$ row of 
$(\mathbf{G} (x; \lambda) \,\, \mathbf{H} (x; \lambda_2))$
with 
\begin{equation} \label{Di-continued2}
        \begin{pmatrix}
        a_{ii} g_{i1} & \cdots & a_{ii} g_{im} 
        & a_{ii} h_{i1} + \mathcal{S}_i (\lambda, \lambda_2) h_1 & \cdots & a_{ii} h_{i(n-m)} + \mathcal{S}_i (\lambda, \lambda_2) h_{n-m} 
        \end{pmatrix},
\end{equation}
where dependence on $\lambda$ and $\lambda_2$ has been suppressed for typesetting
purposes (each term $a_{ii} g_{ij}$ is evaluated at $(x;\lambda)$ and each term
$a_{ii} h_{ij}$ is evaluated
at $(x; \lambda_2)$), and additionally we have introduced the notation
\begin{equation} \label{difference-sums}
    \mathcal{S}_i (\lambda,\lambda_2) h_j 
    := \sum_{k \ne i} (a_{ik} (x; \lambda_2) - a_{ik} (x; \lambda)) h_{k j},
    \quad i \in \{1, \cdots, n\}, 
    \quad j \in \{1, \dots, n-m\}.
\end{equation}
We note that in (\ref{Di-continued2}) triply-repeated indices {\it do not} 
indicate summation

Under Assumption {\bf{(B)}}, the entries $a_{ii} (x; \lambda)$ and 
$a_{ii} (x; \lambda_2)$ agree for each $i \in \{1, 2, \dots, n\}$,
and additionally the differences
$a_{ik} (x; \lambda) - a_{ik} (x; \lambda_2)$, 
$i, k \in \{1, 2, \dots, n\}$, $j \ne k$, in the specification of 
$\mathcal{S}_i (\lambda, \lambda_2) h_j$, $j = 1,2,\dots, n-m$ are independent of $x$.
These considerations allow us to write 
\begin{equation} \label{D1-defined}
    D_i (x; \lambda, \lambda_2) 
    := a_{ii} (x; \lambda) \tilde{\omega}_1 (x; \lambda)
    + \tilde{\omega}_2^i (x; \lambda, \lambda),
\end{equation}
where the slightly more general function $\tilde{\omega}_2^i (x; \lambda, \nu)$ is the determinant of 
the matrix obtained by replacing the $i^\textrm{th}$ row of 
$(\mathbf{G} (x; \lambda) \,\, \mathbf{H} (x; \lambda_2))$
with 
\begin{equation} \label{omega2general}
    \begin{pmatrix}
    0 & 0 & \cdots & 0 
    & \mathcal{S}_i (\nu, \lambda_2) h_1  & \dots & \mathcal{S}_i (\nu, \lambda_2) h_{n-m} \\
    \end{pmatrix}.
\end{equation}
Recalling our notation 
$\tilde{\omega}_1 (x; \lambda) := \omega_1 (f_1 (x; \lambda), \dots, f_n (x; \lambda))$,
we see that 
\begin{equation} \label{omega-tilde-prime}
   \tilde{\omega}_1' (x; \lambda)
   = \sum_{i = 1}^n D_i (x; \lambda, \lambda_2)
   = (\sum_{i=1}^n a_{i i} (x; \lambda)) \tilde{\omega}_1 (x; \lambda)
   + \sum_{i=1}^n \tilde{\omega}_2^i (x; \lambda, \lambda).
\end{equation}
At a crossing point $x_*$, $\tilde{\omega}_1 (x_*; \lambda) = 0$,
so that 
\begin{equation} \label{at-crossing}
    \tilde{\omega}_1' (x_*; \lambda) 
    = \sum_{i=1}^n \tilde{\omega}_2^i (x_*; \lambda, \lambda).
\end{equation}

Focusing now on the left shelf (i.e., $\lambda = \lambda_1$), 
in order to fix the sign of $\frac{\tilde{\omega}_1' (x_*; \lambda_1)}{\tilde{\omega}_2 (x_*; \lambda_1)}$,
we would like to choose $\omega_2$ based on the right-hand side of 
(\ref{at-crossing}) (with $\lambda = \lambda_1$), but we need to take care that $\omega_2$ 
is a properly defined skew-symmetric $n$-linear map. For this, 
we specify $\omega_2$ precisely as in (\ref{omega2-defined}), and we additionally 
set 
\begin{equation*}
    \tilde{\omega}_2 (x; \lambda) 
    := \omega_2 (f_1 (x; \lambda), \dots, f_n (x; \lambda)). 
\end{equation*}

We emphasize here the important point that $\omega_2$ has no explicit dependence 
on either $x$ or $\lambda$. Nonetheless, computing as above, except with 
$\mathbb{A}(\lambda, \lambda_2)$ replaced by $\tilde{\mathbb{A}}(\lambda_1, \lambda_2)$
(from (\ref{mathbb-A-tilde})),
we find that if $\{f_j (x; \lambda)\}_{j=1}^n$ are columns of the matrix $\mathbf{F} (x; \lambda)$ 
specified in (\ref{f-frame}) then (using Assumption {\bf (B)})
\begin{equation} \label{tilde-omega2}
    \tilde{\omega}_2 (x; \lambda)
    = \sum_{i=1}^n \tilde{\omega}_2^i (x; \lambda, \lambda_1).
\end{equation}
Combining this last relation with (\ref{at-crossing}), we see that 
with $\omega_1, \tilde{\omega}_1$, $\omega_2$, and $\tilde{\omega}_2$ as specified above, we have 
\begin{equation*}
    \frac{\tilde{\omega}_1' (x_*; \lambda_1)}{\tilde{\omega}_2 (x_*; \lambda_1)} = 1,
\end{equation*}
providing the claimed monotonicity. It follows from this monotonicity that the 
generalized Maslov index
$\ind (\mathpzc{g} (\cdot; \lambda_1), \mathpzc{h} (\cdot; \lambda_2); [0, 1])$
is a monotonic (positive) count of the number of times the subspaces $\mathpzc{g} (x; \lambda_1)$
and $\mathpzc{h} (x; \lambda_2)$ intersect (counted without multiplicity) as $x$ increases
from $0$ to $1$. This count can be expressed as 
\begin{equation*}
    \# \{x \in (0, 1]: \mathpzc{g} (x; \lambda_1) \cap \mathpzc{h} (x; \lambda_2) \ne \{0\} \},
\end{equation*}
where the omission of $x=0$ in the interval $(0,1]$ is because positively-oriented
crossing points don't increment the Maslov index on departures. This
completes the proof of Theorem \ref{main-theorem}.

\section{Invariance Framework}
\label{invariance-section}

Before turning to applications, we develop a framework for checking 
the invariance specified in Definition \ref{invariance-definition} 
(and assumed in the statement of Theorem \ref{main-theorem}). 
Here, we distinguish between invariance assumed along the boundary 
of the Maslov box (as in the statement of Theorem \ref{main-theorem})
and invariance throughout the interior of the Maslov box 
(which implies $\mathfrak{m} = 0$). This latter condition provides
substantially more information, and so it will be our primary 
focus. 

With the vector functions $\{f_j (x; \lambda)\}_{j=1}^n$ continuing 
to denote the columns of the frame $\mathbf{F} (x; \lambda)$ specified
in (\ref{f-frame}), we introduce the normalization factor 
\begin{equation} \label{f-normalization}
    d(x; \lambda) := |f_1 (x; \lambda) \wedge \dots \wedge f_n (x; \lambda)|
    = \sqrt{\det \mathbb{F} (x; \lambda)},
\end{equation}
where $\mathbb{F} (x; \lambda)$ denotes the Gram matrix with entries 
$(\mathbb{F} (x; \lambda))_{i j} = (f_i (x; \lambda), f_j (x; \lambda))$.
Due to the specific form of $\mathbf{F} (x; \lambda)$, we see that 
\begin{equation} \label{d-product}
    d(x; \lambda) = d_g (x; \lambda) d_h (x; \lambda_2),
\end{equation}
where 
\begin{equation} \label{gh-normalizations}
    \begin{aligned}
    d_g (x; \lambda) &:= |g_1 (x; \lambda) \wedge \dots \wedge g_m (x; \lambda)|
    = \sqrt{\det \mathbb{G} (x; \lambda)}, \\
    d_h (x; \lambda_2) &:= |h_1 (x; \lambda_2) \wedge \dots \wedge h_{n-m} (x; \lambda_2)|
    = \sqrt{\det \mathbb{H} (x; \lambda_2)},
    \end{aligned}
\end{equation}
with $\mathbb{G} (x; \lambda)$ denoting the Gram matrix with entries 
$(\mathbb{G} (x; \lambda))_{i j} = (g_i (x; \lambda), g_j (x; \lambda))$,
and $\mathbb{H} (x; \lambda_2)$ denoting the Gram matrix with entries 
$(\mathbb{H} (x; \lambda_2))_{i j} = (h_i (x; \lambda_2), h_j (x; \lambda_2))$. 
Since the elements $\{g_j (x; \lambda)\}_{j=1}^m$ are linearly independent, 
we have $d_g (x; \lambda) > 0$ for all $(x, \lambda) \in [0,1] \times [\lambda_1, \lambda_2]$,
and similarly for $d_h (x; \lambda_2)$ for all $x \in [0, 1]$. 
As a measure of how far these values remain bounded away from 0, we introduce the constants
\begin{equation} \label{lowercase-constants}
    \begin{aligned}
    c_g &:= \min_{\underset{\lambda \in [\lambda_1, \lambda_2]}{x \in [0, 1]}}
    \frac{d_g (x; \lambda)}{|g_1 (x; \lambda)| \cdots |g_m (x; \lambda)|} \\
    c_h &:= \min_{x \in [0, 1]}
    \frac{d_h (x; \lambda_2)}{|h_1 (x; \lambda_2)| \cdots |h_{n-m} (x; \lambda_2)|}. \\
    \end{aligned}    
\end{equation}
The values of $c_h$ can reasonably be obtained by computation, but we would generally like
to estimate $c_g$ by other means.

\begin{remark} \label{only-use}
For our applications, our point of view will be that the generalized Maslov index 
\begin{equation*}
\ind (\mathpzc{g} (\cdot; \lambda_1), \mathpzc{h} (\cdot; \lambda_2); [0, 1])    
\end{equation*}
is to be obtained by computation (possibly analytic, but more generally 
numerical), and so for most of this discussion 
we take $\mathbf{G} (x; \lambda_1)$ and $\mathbf{H} (x; \lambda_2)$
to be effectively known for all $x \in [0, 1]$. For invariance throughout 
the Maslov box, this
leaves the problem of understanding $\mathbf{G} (x; \lambda)$
for all $(x, \lambda) \in [0, 1] \times (\lambda_1, \lambda_2]$.
\end{remark}

Following the set-up in Section \ref{invariance-subsection}, we specify the normalized
functions 
\begin{equation} \label{normalized-variables}
    \psi_i (x; \lambda) := \frac{\tilde{\omega}_i (x; \lambda)}{d(x; \lambda)},
    \quad i = 1, 2.
\end{equation}
In order to establish invariance, we will set 
\begin{equation} \label{rho-defined-again}
    \rho (x; \lambda) := \frac{1}{2} (\psi_1 (x; \lambda)^2 + \psi_2 (x; \lambda)^2),    
\end{equation}
and our goal is to show that for all $(x,\lambda) \in [0,1] \times [\lambda_1, \lambda_2]$
we have $\rho (x; \lambda) > 0$. Following \cite{BCCJM2022}, our approach is to check
that $\rho (0; \lambda) > 0$, and to show that $\rho_x (x; \lambda)$ is bounded
below such that $\rho (x; \lambda)$ can never become 0. As noted in Remark 
\ref{only-use}, our aim is to use only values generated for the evaluation of 
$\ind (\mathpzc{g} (\cdot; \lambda_1), \mathpzc{h} (\cdot; \lambda_2); [0, 1])$;
i.e., values of $\mathbf{G} (x; \lambda_1)$ and $\mathbf{H} (x; \lambda_2)$ 
for all $x \in [0, 1]$. See Remark \ref{normalization-remark} below regarding the 
advantage of introducing the values $d (x; \lambda)$ for this part of the 
analysis. 

To start, we observe that, by construction, neither $\mathbf{G} (0; \lambda)$ nor 
$\mathbf{H} (0; \lambda_2)$ depends on $\lambda$, so $\rho (0; \lambda)$ is 
constant for all $\lambda \in [\lambda_1, \lambda_2]$. In particular,  
for all $\lambda \in [\lambda_1, \lambda_2]$, $\rho (0; \lambda)$ can be 
computed from the frames $\mathbf{G} (0; \lambda) = \mathbf{P}$ 
and $\mathbf{H} (0; \lambda_2)$, the 
latter of which will generally be obtained by computation. 

Turning to $\rho_x (x; \lambda)$, we can write 
\begin{equation*}
    \rho_x = \psi_1 \partial_x \psi_1 + \psi_2 \partial_x \psi_2,
\end{equation*}
from which we see that we need to understand 
\begin{equation} \label{psi-i-prime}
    \partial_x \psi_j (x; \lambda) 
    = \frac{\tilde{\omega}_j' (x; \lambda)}{d(x; \lambda)} - \frac{d'(x; \lambda)}{d(x; \lambda)} \psi_j (x; \lambda),
    \quad j = 1, 2.
\end{equation}
For $j=1$, we would like to use (\ref{omega-tilde-prime}) to
relate $\tilde{\omega}_1' (x; \lambda)$ to $\tilde{\omega}_2 (x; \lambda)$, but we must
take care in this, because the former includes a sum of values $\tilde{\omega}_2^i (x; \lambda, \lambda)$
and the latter a sum of values $\tilde{\omega}_2^i (x; \lambda, \lambda_1)$. We will 
see that in many important cases, including those arising from eigenvalue problems 
such as (\ref{general-form}), we have the straightforward relation 
\begin{equation} \label{straightforward-relation}
\tilde{\omega}_2^i (x; \lambda, \lambda)
= \frac{\lambda_2 - \lambda}{\lambda_2 - \lambda_1} \tilde{\omega}_2^i (x; \lambda, \lambda_1),
\end{equation}
for all $i = 1, 2, \dots, n$. In this case (i.e., when (\ref{straightforward-relation}) holds), 
we have the useful relation
(combining (\ref{omega-tilde-prime}) and (\ref{tilde-omega2}))
\begin{equation} \label{omega1prime-omega2}
    \tilde{\omega}_1' (x; \lambda) = \Big(\sum_{i=1}^n a_{ii} (x; \lambda) \Big) \tilde{\omega}_1 (x; \lambda)
    + \frac{\lambda_2 - \lambda}{\lambda_2 - \lambda_1} \tilde{\omega}_2 (x; \lambda).
\end{equation}
We will assume (\ref{straightforward-relation}) holds throughout this section (and it will hold for our 
applications). We note, however, that (\ref{straightforward-relation}) is not a requirement 
of Theorem \ref{main-theorem}, but rather characterizes a family of cases for which 
invariance is more readily verified. 

\begin{proposition} \label{invariance-proposition1}
For (\ref{nonhammy})-(\ref{bc}), let Assumptions {\bf (A)} and {\bf (B)} hold, and with 
$\{\tilde{\omega}^i_2 (x; \lambda, \nu)\}_{i=1}^n$ specified via (\ref{omega2general})
suppose (\ref{straightforward-relation}) holds. In addition, set 
\begin{equation} \label{ca-defined}
    C_a := \max_{x \in [0, 1]} \Big| \sum_{i=1}^n a_{i i} (x) \Big|,
\end{equation}
and let $C_d$, $\delta$, and $C$ be constants so that 
\begin{equation*}
    \begin{aligned}
    \max_{\underset{\lambda \in [\lambda_1, \lambda_2]}{x \in [0, 1]}}    
    \Big| \frac{d' (x; \lambda)}{d (x; \lambda)} \Big| &\le C_d \\
    \max_{\underset{\lambda \in [\lambda_1, \lambda_2]}{x \in [0, 1]}}   
    \Big| \frac{\partial_x \tilde{\omega}_2 (x; \lambda)}{d (x; \lambda)} \Big| &\le \delta \\
    C := 2C_d + \max\{2C_a, 1\} &+ 1.
    \end{aligned}
\end{equation*}
If 
\begin{equation} \label{invariance-criterion}
    \rho(0; \lambda) > \frac{\delta^2}{2C} (e^C - 1),
\end{equation}
then $\rho (x; \lambda) > 0$ for all 
$(x, \lambda) \in [0, 1] \times [\lambda_1, \lambda_2]$. In particular, 
the triple $(\mathpzc{f} (\cdot; \cdot), \omega_1, \omega_2)$ is 
invariant on $[0, 1] \times [\lambda_1, \lambda_2]$ in the 
sense of Definition \ref{invariance-definition}.
\end{proposition}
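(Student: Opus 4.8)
The plan is to run a Gr\"onwall-type argument on $\rho(x;\lambda)$, showing that whatever decay is forced on it by the normalization factor $d(x;\lambda)$ is dominated by the lower bound supplied by hypothesis (\ref{invariance-criterion}). First I would fix $\lambda \in [\lambda_1,\lambda_2]$ (everything is uniform in $\lambda$, and $\rho(0;\lambda)$ is $\lambda$-independent by construction) and compute $\rho_x = \psi_1 \partial_x \psi_1 + \psi_2 \partial_x \psi_2$ using (\ref{psi-i-prime}). For the $\psi_2$ term I would simply bound $|\partial_x \psi_2|$ by $|\partial_x \tilde\omega_2 / d| + |d'/d|\,|\psi_2| \le \delta + C_d |\psi_2|$. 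For the $\psi_1$ term I would substitute the key identity (\ref{omega1prime-omega2}), which is available because (\ref{straightforward-relation}) is assumed: this gives $\partial_x \psi_1 = \big(\sum_i a_{ii}\big)\psi_1 + \tfrac{\lambda_2-\lambda}{\lambda_2-\lambda_1}\psi_2 - (d'/d)\psi_1$, so that $|\psi_1 \partial_x \psi_1| \le C_a \psi_1^2 + |\psi_1||\psi_2| + C_d \psi_1^2$, using $0 \le \tfrac{\lambda_2-\lambda}{\lambda_2-\lambda_1} \le 1$.

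Next I would assemble these bounds and convert everything into an inequality purely in $\rho$. Using $|\psi_1||\psi_2| \le \tfrac12(\psi_1^2+\psi_2^2) = \rho$ and $\psi_i^2 \le 2\rho$ and $|\psi_2| \le \sqrt{2\rho} \le 1+\rho$ (or, more cleanly, absorbing the $\delta|\psi_2|$ term via $\delta|\psi_2| \le \tfrac{\delta^2}{2} + \tfrac{\psi_2^2}{2} \le \tfrac{\delta^2}{2}+\rho$), one obtains something of the shape
\begin{equation*}
    |\rho_x| \le \big(2C_d + \max\{2C_a,1\} + 1\big)\rho + \frac{\delta^2}{2} = C\rho + \frac{\delta^2}{2},
\end{equation*}
with $C$ exactly the constant defined in the statement. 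In particular $\rho_x \ge -C\rho - \delta^2/2$ pointwise wherever $\rho$ is defined.

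Then I would integrate this differential inequality. The function $x \mapsto e^{Cx}\big(\rho(x;\lambda) + \tfrac{\delta^2}{2C}\big)$ has nonnegative derivative from the bound above, so $\rho(x;\lambda) + \tfrac{\delta^2}{2C} \ge e^{-Cx}\big(\rho(0;\lambda)+\tfrac{\delta^2}{2C}\big)$, hence
\begin{equation*}
    \rho(x;\lambda) \ge e^{-Cx}\rho(0;\lambda) - \frac{\delta^2}{2C}\big(1 - e^{-Cx}\big) \ge e^{-C}\rho(0;\lambda) - \frac{\delta^2}{2C}\big(1-e^{-C}\big)
\end{equation*}
for all $x \in [0,1]$. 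This last quantity is strictly positive precisely when $\rho(0;\lambda) > \tfrac{\delta^2}{2C}(e^C - 1)$, which is hypothesis (\ref{invariance-criterion}). Since $\rho(x;\lambda)>0$ for all $(x,\lambda)$ is exactly $\psi_1^2 + \psi_2^2 > 0$, and the $\psi_i$ differ from the $\tilde\omega_i$ only by the nonvanishing factor $d(x;\lambda)>0$, this says $\tilde\omega_1$ and $\tilde\omega_2$ never vanish simultaneously, i.e.\ the triple $(\mathpzc{f}(\cdot;\cdot),\omega_1,\omega_2)$ is invariant on $[0,1]\times[\lambda_1,\lambda_2]$ in the sense of Definition \ref{invariance-definition}.

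There is one genuine subtlety to address rather than a hard step: the functions $\psi_i$, and hence $\rho$, are only guaranteed smooth (indeed the $d'/d$ term makes sense) as long as $d(x;\lambda)>0$, which holds everywhere here since $\{f_j\}$ stay linearly independent; so the differential inequality is valid on all of $[0,1]$ and no continuity/bootstrap argument to extend past a first zero is needed — $\rho$ simply cannot reach $0$. The main thing to get right is the bookkeeping that produces the constant $C$ exactly as stated: one must be careful that the $\delta|\psi_2|$ cross term is split so as to contribute $+1$ to the coefficient of $\rho$ (via $\tfrac{\psi_2^2}{2}\le\rho$) and $\tfrac{\delta^2}{2}$ to the constant, that the diagonal-trace term contributes $2C_a$ after using $\psi_1^2\le 2\rho$, that $|\psi_1||\psi_2|\le\rho$ contributes the factor multiplying $\max\{2C_a,1\}$ appropriately, and that the two $d'/d$ terms contribute $2C_d$; tracking which bound ($\psi_i^2 \le 2\rho$ versus $|\psi_1||\psi_2|\le\rho$) is applied where is the only place an error is likely to creep in.
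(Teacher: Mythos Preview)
Your proposal is correct and follows essentially the same route as the paper's proof: compute $\rho'$ from (\ref{psi-i-prime}) and (\ref{omega1prime-omega2}), bound the cross terms via $|\psi_1\psi_2|\le\rho$ and $\delta|\psi_2|\le\tfrac{\delta^2}{2}+\tfrac{\psi_2^2}{2}$, combine $C_a\psi_1^2+\tfrac12\psi_2^2\le\max\{2C_a,1\}\rho$ to arrive at $\rho'\ge -C\rho-\tfrac{\delta^2}{2}$, and then integrate. The paper writes the last step as $(e^{Cx}\rho)'\ge -\tfrac{\delta^2}{2}e^{Cx}$ while you phrase it as monotonicity of $e^{Cx}\big(\rho+\tfrac{\delta^2}{2C}\big)$, but these are algebraically equivalent and yield the same final inequality.
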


\begin{proof}
First, combining (\ref{psi-i-prime}) (with $j=1$) and (\ref{omega1prime-omega2}), we see that 
\begin{equation*}
    \psi_1 (x; \lambda) \partial_x \psi_1 (x; \lambda)
    = \Big( (\sum_{i=1}^n a_{i i} (x; \lambda)) - \frac{d' (x; \lambda)}{d (x; \lambda)} \Big) \psi_1 (x; \lambda)^2
    + \frac{\lambda_2 - \lambda}{\lambda_2 - \lambda_1} \psi_1 (x; \lambda) \psi_2 (x; \lambda),
\end{equation*}
and we can also write 
\begin{equation*}
    \psi_2 (x; \lambda) \partial_x \psi_2 (x; \lambda)
    = \psi_2 (x; \lambda) \frac{\tilde{\omega}_2' (x; \lambda)}{d(x; \lambda)}
    - \frac{d'(x; \lambda)}{d(x; \lambda)} \psi_2 (x; \lambda)^2.
\end{equation*}
Combining these observations, we arrive at the relation
\begin{equation} \label{psi-derivative}
    \rho' (x; \lambda) = (\sum_{i=1}^n a_{i i} (x; \lambda)) \psi_1^2
    - \frac{d'}{d} (\psi_1^2 + \psi_2^2) + \frac{\lambda_2 - \lambda}{\lambda_2 - \lambda_1} \psi_1 \psi_2 
    + \frac{\tilde{\omega}_2'}{d} \psi_2.
\end{equation}
Using the estimates 
\begin{equation*}
    \begin{aligned}
    \Big|\frac{\lambda_2 - \lambda}{\lambda_2 - \lambda_1} \psi_1 \psi_2 \Big| &\le \rho \\
    |\frac{\tilde{\omega}_2'}{d} \psi_2| &\le \frac{1}{2} (\delta^2 + \psi_2^2),
    \end{aligned}
\end{equation*}
both holding for all 
$(x, \lambda) \in [0, 1] \times [\lambda_1, \lambda_2]$,
along with the definitions of $C_a$ and $C_d$
we obtain the differential inequality 
\begin{equation}
    \begin{aligned}
    \rho' &\ge - C_a \psi_1^2 - (2C_d + 1) \rho - \frac{1}{2} \delta^2 - \frac{1}{2} \psi_2^2 \\
    & \ge - (2C_d + \max \{2C_a, 1 \} + 1) \rho - \frac{1}{2} \delta^2, 
    \end{aligned}
\end{equation}
which we can express as 
\begin{equation*}
    \rho' \ge - C \rho - \frac{1}{2} \delta^2.
\end{equation*}
Upon expressing this final inequality as 
$(e^{Cx} \rho)' \ge - \frac{1}{2} \delta^2 e^{Cx}$ and integrating 
both sides on $[0, x]$, we obtain the relation 
\begin{equation*}
    e^{Cx} \rho (x; \lambda) \ge \rho (0; \lambda) - \frac{\delta^2}{2C} (e^{Cx} - 1)
    \ge \rho (0; \lambda) - \frac{\delta^2}{2C} (e^{C} - 1).
\end{equation*}
The claim follows immediately. 
\end{proof}

We see from Proposition \ref{invariance-proposition1} that invariance can be 
established from the three values $\rho (0; \lambda)$, $C_d$, 
and $\delta$ (along with the easily obtained value $C_a$). 
We have already seen that the value of $\rho (0; \lambda)$
can be obtained in a natural way by computation of $\mathbf{H} (0; \lambda_2)$, 
so we turn next to the value $C_d$, for which we first observe from (\ref{d-product})
the relation
\begin{equation} \label{d-sum}
    \frac{d' (x; \lambda)}{d (x; \lambda)}
    = \frac{d_g' (x; \lambda)}{d_g (x; \lambda)}
    + \frac{d_h' (x; \lambda_2)}{d_h (x; \lambda_2)}.
\end{equation}
Taking a maximum on both sides of this relation leads to the inequality
\begin{equation*}
    \max_{\underset{\lambda \in [\lambda_1, \lambda_2]}{x \in [0, 1]}} \Big|  
    \frac{d' (x; \lambda)}{d (x; \lambda)} \Big| 
    \le \max_{\underset{\lambda \in [\lambda_1, \lambda_2]}{x \in [0,1]}} 
    \Big| \frac{d_g' (x; \lambda)}{d_g (x; \lambda)} \Big| 
    +  \max_{x \in [0, 1]} \Big| \frac{d_h' (x; \lambda_2)}{d_h (x; \lambda_2)} \Big|.
\end{equation*}
We have the following proposition. 

\begin{proposition} \label{invariance-proposition2}
For (\ref{nonhammy})-(\ref{bc}), let Assumptions {\bf (A)} and {\bf (B)} hold, and with 
$\{\tilde{\omega}^i_2 (x; \lambda, \nu)\}_{i=1}^n$ specified via (\ref{omega2general})
suppose (\ref{straightforward-relation}) holds. In addition, set 
\begin{equation} \label{cap-a-defined}
C_A := \max_{\underset{\lambda \in [\lambda_1, \lambda_2]}{x \in [0,1]}}
\|A (x; \lambda)\|.
\end{equation}
Then 
\begin{equation*}
    \begin{aligned}
    \max_{\underset{\lambda \in [\lambda_1, \lambda_2]}{x \in [0,1]}} 
    \Big| \frac{d_g' (x; \lambda)}{d_g (x; \lambda)} \Big| 
    &\le \frac{m! C_A}{c_g^2} \\
    \max_{x \in [0, 1]} \Big| \frac{d_h' (x; \lambda_2)}{d_h (x; \lambda_2)} \Big|
    &\le \frac{(n-m)!}{c_h^2} \max_{x \in [0, 1]} \|A (x; \lambda_2)\|,
    \end{aligned}
\end{equation*}
and also 
\begin{equation*}
    d (x; \lambda) \ge c_g c_h |g_1 (x; \lambda)| \cdots |g_m (x; \lambda)| 
    |h_1 (x; \lambda_2)| \cdots |h_{n-m} (x; \lambda_2)|, 
\end{equation*}
for all $(x, \lambda) \in [0, 1] \times [\lambda_1, \lambda_2]$. 
\end{proposition}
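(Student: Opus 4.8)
My plan is to dispatch the three assertions in sequence; each is an elementary consequence of differentiating a Gram determinant together with a Hadamard-type inequality, and the constants $c_g$, $c_h$ enter precisely to absorb the frame-dependent normalization.

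For the first estimate I would start from $d_g(x;\lambda)^2 = \det\mathbb{G}(x;\lambda)$ (see (\ref{gh-normalizations})), so that
\[
\frac{d_g'(x;\lambda)}{d_g(x;\lambda)} \;=\; \frac{\partial_x\det\mathbb{G}(x;\lambda)}{2\,\det\mathbb{G}(x;\lambda)} .
\]
Next I would differentiate $\det\mathbb{G}$ row by row and use $g_j'(x;\lambda)=A(x;\lambda)g_j(x;\lambda)$ from (\ref{g-frame}) to rewrite the differentiated entries; each resulting summand is, up to sign, the determinant of a Gram-type matrix $\big(\langle v_i,g_j\rangle\big)$ in which exactly one vector has been replaced by $A(x;\lambda)g_i$. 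Expanding such a determinant by the Leibniz formula and estimating each inner product by Cauchy--Schwarz (equivalently, by Hadamard's inequality) bounds it by $\|A(x;\lambda)\|$ times $\prod_{j=1}^m |g_j(x;\lambda)|^2$, up to the combinatorial factor that becomes the $m!$ in the statement. Summing over the $m$ rows, dividing by $\det\mathbb{G}(x;\lambda)=d_g(x;\lambda)^2$, and invoking the defining lower bound $d_g(x;\lambda)\ge c_g\,|g_1(x;\lambda)|\cdots|g_m(x;\lambda)|$ from (\ref{lowercase-constants}), the products of norms cancel and the uniform bound $m!\,C_A/c_g^2$ survives after taking the maximum over the compact box.

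The second estimate is the same computation with the parameter frozen at $\lambda_2$ throughout, so that $\mathpzc{h}$, $n-m$, $\mathbb{H}$, and $c_h$ replace $\mathpzc{g}$, $m$, $\mathbb{G}$, and $c_g$; since $A$ is now only ever evaluated at $\lambda_2$, the bound is phrased with $\max_{x\in[0,1]}\|A(x;\lambda_2)\|$ rather than the full $C_A$ of (\ref{cap-a-defined}). The third assertion is then immediate: (\ref{d-product}) gives $d(x;\lambda)=d_g(x;\lambda)\,d_h(x;\lambda_2)$, and applying the two defining lower bounds from (\ref{lowercase-constants}), namely $d_g(x;\lambda)\ge c_g\prod_{j=1}^m |g_j(x;\lambda)|$ and $d_h(x;\lambda_2)\ge c_h\prod_{j=1}^{n-m}|h_j(x;\lambda_2)|$, to the two factors yields the claimed inequality.

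I expect the only step demanding genuine care — hence the main (if modest) obstacle — to be the determinant bookkeeping in the first estimate: one must check that after differentiating $\det\mathbb{G}$ and substituting $g_j'=A g_j$ the factor extracted is really $\|A(x;\lambda)\|$ and that every remaining factor is one of the $|g_j(x;\lambda)|$, so that it cancels cleanly against $d_g^2 \ge c_g^2\prod_{j=1}^m |g_j(x;\lambda)|^2$ and leaves a bound independent of the chosen frame. Everything else is routine: continuity of $A$ on $[0,1]\times[\lambda_1,\lambda_2]$ (Assumption {\bf (A)}) makes all the maxima defining $C_A$ and its $\lambda_2$-analogue finite, and linear independence of the frames makes $d_g,d_h>0$, so the divisions above are legitimate.
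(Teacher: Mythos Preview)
Your proposal is correct and follows essentially the same approach as the paper: both express $d_g'/d_g$ as half the logarithmic derivative of $\det\mathbb{G}$, bound the differentiated Gram entries via $|(\mathbb{G}')_{ij}|\le 2\|A\|\,|g_i|\,|g_j|$, and then divide through by $d_g^2\ge c_g^2\prod_j|g_j|^2$ to obtain a frame-independent bound. The only cosmetic difference is organizational: the paper invokes Jacobi's formula $d_g'/d_g=\tfrac12\tr(\mathbb{G}^{-1}\mathbb{G}')$ and bounds the adjugate entries $|m_{ik}|\le (m-1)!\prod_{j}|g_j|^2/(|g_i||g_k|)$ directly, whereas you phrase the same computation as row-by-row differentiation followed by a Leibniz expansion; these are two packagings of the identical estimate, and your handling of the second and third assertions matches the paper exactly.
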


\begin{proof}
Beginning with $d_g (x; \lambda)$, we recall (\ref{gh-normalizations})
and use Jacobi's formula to compute
\begin{equation*}
    \begin{aligned}
    d_g' (x; \lambda) 
    &= \frac{1}{2 d_g (x; \lambda)} \frac{\partial}{\partial x} \det \mathbb{G} (x; \lambda) \\
    &= \frac{1}{2} d_g (x; \lambda) \tr (\mathbb{G} (x; \lambda)^{-1} \mathbb{G}' (x; \lambda)),
    \end{aligned}
\end{equation*}
from which we see that 
\begin{equation} \label{dg-prime}
    \frac{d_g' (x; \lambda)}{d_g (x; \lambda)}
    = \frac{1}{2} \tr (\mathbb{G} (x; \lambda)^{-1} \mathbb{G}' (x; \lambda)).
\end{equation}
Likewise, 
\begin{equation} \label{dh-prime}
    \frac{d_h' (x; \lambda_2)}{d_h (x; \lambda_2)}
    = \frac{1}{2} \tr (\mathbb{H} (x; \lambda_2)^{-1} \mathbb{H}' (x; \lambda_2)),
\end{equation}
and so our goal becomes to estimate values for the constants 
\begin{equation} \label{uppercase-constants}
    \begin{aligned}
    C_g &:= \max_{\underset{\lambda \in [\lambda_1, \lambda_2]}{x \in [0,1]}}
    \frac{1}{2} \Big| \tr (\mathbb{G} (x; \lambda)^{-1} \mathbb{G}' (x; \lambda)) \Big| \\
    C_h &:= \max_{x \in [0, 1]} 
    \frac{1}{2} \Big| \tr (\mathbb{H} (x; \lambda_2)^{-1} \mathbb{H}' (x; \lambda_2)) \Big|.
    \end{aligned}
\end{equation}
As with $c_h$, the value of $C_h$ can reasonably obtained by computation, 
but we would generally like to estimate $C_g$ by other means. 

Toward this end, we begin by recalling that 
$(\mathbb{G} (x; \lambda))_{i j} = (g_i (x; \lambda), g_j (x;\lambda))$,
and so 
\begin{equation*}
    (\mathbb{G}' (x; \lambda))_{i j} = (A (x; \lambda) g_i (x; \lambda), g_j (x;\lambda))
    + (g_i (x; \lambda), A (x; \lambda) g_j (x;\lambda)),
\end{equation*}
from which we see that for all $i, j \in \{1, 2, \dots, n\}$
\begin{equation} \label{gram-prime-estimates}
    |(\mathbb{G}' (x; \lambda))_{i j}| \le 2 \|A (x; \lambda)\| |g_i (x; \lambda)| |g_j (x; \lambda)|,
\end{equation}
for all $(x, \lambda) \in [0,1] \times [\lambda_1, \lambda_2]$. Next, if we let 
$M (x; \lambda) = (m_{i j} (x; \lambda))$ denote the adjugate matrix for
$\mathbb{G} (x; \lambda)$, then $\mathbb{G} (x; \lambda)^{-1} = \frac{M(x; \lambda)}{d_g (x; \lambda)^2}$,
and we can bound the entries of $M$ as follows: for any collection of distinct 
indices $\{i_{k}\}_{k=1}^m$ 
\begin{equation} \label{adjugate-estimates}
    \begin{aligned}
    |m_{i_1 i_1} (x; \lambda)| &\le (m-1)! |g_{i_2} (x; \lambda)|^2 \cdots |g_{i_m} (x; \lambda)|^2 \\
     |m_{i_1 i_2} (x; \lambda)| &\le (m-1)! |g_{i_1} (x; \lambda)| |g_{i_2} (x; \lambda)| |g_{i_3} (x; \lambda)|^2 \cdots |g_{i_m} (x; \lambda)|^2.
    \end{aligned}
\end{equation}

We can compute 
\begin{equation*}
    \begin{aligned}
    (\mathbb{G} (x; \lambda))^{-1} \mathbb{G}' (x; \lambda))_{i i} 
    &= \sum_{k=1}^m ((\mathbb{G} (x; \lambda))^{-1})_{i k} (\mathbb{G}' (x; \lambda))_{k i} \\
    &= \frac{1}{d_g (x; \lambda)^2} 
    \sum_{k=1}^m m_{i k} (x; \lambda) (\mathbb{G}' (x; \lambda))_{k i},
    \end{aligned}
\end{equation*}
and combining (\ref{gram-prime-estimates}) and (\ref{adjugate-estimates}) we can conclude 
that for all $i, k \in \{1, 2, \dots, m\}$
\begin{equation*}
    |m_{i k} (x; \lambda) (\mathbb{G}' (x; \lambda))_{k i}|
    \le 2 (m-1)! \|A (x; \lambda)\| |g_1 (x; \lambda)|^2 \cdots |g_m (x; \lambda)|^2.
\end{equation*}
In this way, we see that  
\begin{equation}
\begin{aligned}
    |\frac{d_g' (x; \lambda)}{d_g (x; \lambda)}|
    &\le \frac{(m-1)!}{d_g (x; \lambda)^2} \sum_{k=1}^m \|A (x; \lambda)\| |g_1 (x; \lambda)|^2 \cdots |g_n (x; \lambda)|^2 \\
    &= \frac{m!}{d_g (x; \lambda)^2} \|A (x; \lambda)\| |g_1 (x; \lambda)|^2 \cdots |g_n (x; \lambda)|^2.
\end{aligned}
\end{equation}
According to the specification of $c_g$ in (\ref{lowercase-constants}), we obtain the estimate 
\begin{equation*}
    |\frac{d_g' (x; \lambda)}{d_g (x; \lambda)}|
    \le \frac{m! \|A (x; \lambda)\|}{c_g^2},
\end{equation*}
for all $(x, \lambda) \in [0, 1] \times [\lambda_1, \lambda_2]$, allowing us to write 
\begin{equation} \label{cg-inequality}
    C_g \le \frac{m! C_A}{c_g^2}.
\end{equation}

The estimate on $\frac{d_h' (x; \lambda_2)}{d_h (x; \lambda_2)}$ follows by 
an essentially identical calculation, and the final inequality in Proposition 
\ref{invariance-proposition2} is an immediate consequence of 
(\ref{d-product}) and (\ref{lowercase-constants}). 
\end{proof}

These considerations still leave the critical term 
$\frac{\tilde{\omega}_2' (x; \lambda)}{d (x; \lambda)}$ 
to be evaluated. In general, the evaluation of this ratio 
is quite cumbersome, so we will only analyze it in detail 
for the two specific classes of equations addressed in our section
on applications.

\section{Applications} 
\label{applications-section}

Our development, including monotonicity, is widely applicable to any 
system of form (\ref{nonhammy}) for which Assumptions {\bf (A)} and 
{\bf (B)} hold, with one substantial caveat: invariance is often problematic
to check. Nonetheless, we start with an important family of examples for which 
invariance is especially tractable.

\subsection{Single Higher Order Equations}
\label{higher-order-section}

In this section, we consider eigenvalue problems with the form 
\begin{equation} \label{higher-order-equation}
    (\alpha_n (x; \kappa_n) \phi^{(n-1)})' + \sum_{j=2}^{n-1} \alpha_j (x; \kappa_j) \phi^{(j)} 
    + \alpha_1 (x) \phi' + \alpha_0 (x) \phi = \lambda \phi, 
\end{equation}
$x \in (0, 1)$, $\phi (x; \lambda) \in \mathbb{R}$,
for some integer $n \ge 2$, and for which we assume 
$\alpha_0, \alpha_1 \in C ([0, 1], \mathbb{R})$,
$\{\alpha_j (\cdot; \kappa_j)\}_{j=2}^{n-1} \subset C ([0, 1], \mathbb{R})$,
and $\alpha_n (\cdot; \kappa_n) \in C^1 ([0, 1], \mathbb{R})$, with 
$\alpha_n (x; \kappa_n) \ge \alpha_n^0 > 0$ for all $x \in [0, 1]$ for some 
fixed value $\alpha_n^0$. Here, $\phi^{(j)}$ denotes the $j^{\rm th}$ 
derivative of $\phi$ with respect to $x$, and 
the non-zero parameters $\{\kappa_j\}_{j=2}^n$ have been introduced 
in anticipation of our discussion of invariance, and can be
viewed as fixed values for other parts of the discussion.
Generally, for each 
$j \in \{2, 3, \dots, n\}$, we view $\kappa_j$ as capturing the size
of the coefficient $\alpha_j (x; \kappa_j)$; often, we have in 
mind $\alpha_j (x; \kappa_j) = \kappa_j$ for at least some indices
$j \in \{2, \dots, n\}$. The analysis does not require flexibility 
in adjusting the sizes of $a_0 (x)$ and $a_1 (x)$, so no constants
are incorporated into those terms. 

Our interest in such equations is particularly 
motivated by the linearization of dispersive--diffusive PDE such 
as 
\begin{equation} \label{dispersive-diffusive}
    u_t + f(u)_x = (b(u)u_x)_x + (c (u) u_{xx})_x 
\end{equation}
about stationary solutions $\bar{u} (x)$, and similarly 
for fourth-order equations of generalized Cahn-Hilliard form 
\begin{equation} \label{generalized-ch}
u_t = (b(u)u_x)_x - (c(u) u_{xxx})_x 
\end{equation}
(primarily on unbounded domains in both cases). See, e.g., 
\cite{HZ2000} for a discussion of the former, \cite{Howard2007} 
for a discussion of the latter, and \cite{PW1992} for a broader 
view of the spectral analysis of nonlinear waves arising 
in single equations of higher order (via the Evans function rather
than the Maslov index).  

We express (\ref{higher-order-equation}) as a first order system 
by introducing a vector function $y \in \mathbb{R}^n$ with coordinates
$y_1 = \phi$, $y_2 = \kappa_2 \phi'$, $y_3 = \kappa_3 \phi''$, ..., 
$y_{n-1} = \kappa_{n-1} \phi^{(n-2)}$, $y_n = \alpha_n (x; \kappa_n) \phi^{(n-1)}$.
In this way, we obtain (\ref{nonhammy}) with 
\begin{equation} \label{higher-order-A}
    A (x; \lambda) 
    = \begin{pmatrix}
    0 & \frac{1}{\kappa_2} & 0 & 0 & \dots & 0 & 0 \\
    0 & 0 & \frac{\kappa_2}{\kappa_3} & 0 & \dots & 0 & 0 \\
     0 & 0 & 0 & \frac{\kappa_3}{\kappa_4} & \dots & 0 & 0 \\
    \vdots & \vdots & \vdots & \vdots & \vdots & \vdots & \vdots \\
    0 & 0 & 0 & 0 & \dots & 0 & \frac{\kappa_{n-1}}{\alpha_n (x; \kappa_n)} \\
    \lambda - \alpha_0 (x) & - \frac{\alpha_1 (x)}{\kappa_2} & - \frac{\alpha_2 (x; \kappa_2)}{\kappa_3} & - \frac{\alpha_3 (x; \kappa_3)}{\kappa_4} 
    & \dots & - \frac{\alpha_{n-2} (x; \kappa_{n-2})}{\kappa_{n-1}} & - \frac{\alpha_{n-1} (x; \kappa_{n-1})}{\alpha_n (x; \kappa_n)}
     \end{pmatrix},
\end{equation}
for which we immediately see that Assumption {\bf (A)} is satisfied. 
(Here, we recognize that expressions such as (\ref{higher-order-A}) are quite cumbersome, but in 
certain places they seem to provide greater clarity than their counterpart forms expressed
with more compact notation.)
In addition, it's clear by inspection that we have the relations
\begin{equation*}
    a_{i i} (x; \lambda) 
    = \begin{cases}
    0 & i \in \{1, 2, \dots, n - 1\} \\
    - \frac{\alpha_{n-1} (x; \kappa_{n-1})}{\alpha_n (x; \kappa_n)} & i = n,
    \end{cases} 
\end{equation*}
and 
\begin{equation*}
    a_{i j} (x; \lambda_2) - a_{i j} (x; \lambda) 
    = \begin{cases}
    \lambda_2 - \lambda & (i,j) = (n,1) \\
    0 & \textrm{otherwise},
    \end{cases}
\end{equation*}
and we can conclude that Assumption {\bf (B)} holds as well. It follows that we can apply
Theorem \ref{main-theorem} as long as we can check the invariance condition
of Definition \ref{invariance-definition}. Following our general 
discussion of invariance in Section \ref{invariance-section}, the main 
thing we have left to understand is the ratio 
$\frac{\tilde{\omega}_2' (x; \lambda)}{d (x; \lambda)}$.

In order to understand $\tilde{\omega}_2' (x; \lambda)$, we begin 
by observing from the definition of $ \mathcal{S}_i (\lambda, \lambda_2) h_j$
in (\ref{difference-sums}) that in this case 
\begin{equation*}
     \mathcal{S}_i (\lambda,\lambda_2) h_j 
     = \begin{cases}
     (\lambda_2 - \lambda) h_{1 j} & (i, j) \in \{n\} \times \{1, \dots, n - m\}, \\
     0 & \textrm{otherwise},
     \end{cases}
\end{equation*}
where $m$ is specified from the boundary conditions 
(\ref{bc}). It's now clear from (\ref{omega2general}) that
$\tilde{\omega}_2^i (x; \lambda, \lambda_1) \equiv 0$ for all 
$i \in \{1, 2, \dots, n-1\}$ so that (from (\ref{tilde-omega2}))
$\tilde{\omega}_2 (x; \lambda) = \tilde{\omega}_2^n (x; \lambda, \lambda_1)$,
where $\tilde{\omega}_2^n (x; \lambda, \lambda_1)$ is the determinant 
of the matrix obtained by replacing the final row of 
$(\mathbf{G} (x; \lambda) \,\, \mathbf{H} (x; \lambda_2))$ with 
\begin{equation*}
   \begin{pmatrix}
    0 & \dots & 0
    & (\lambda_2 - \lambda_1) h_{11} (\lambda_2) & \dots & (\lambda_2 - \lambda_1) h_{1 (n-m)} (\lambda_2)
    \end{pmatrix}.
\end{equation*}
With this characterization of $\tilde{\omega}_2^n (x; \lambda, \lambda_1)$
it's clear that condition (\ref{straightforward-relation}) holds. 

Upon differentiating this last determinant, we obtain a sum of $n$
determinants, each with a derivative on all the entries in 
exactly one row. It's straightforward to see that the 
first $n-2$ summands will be 0, leaving only the 
final two, namely 
\begin{equation} \label{higher-det1}
    \frac{(\lambda_2 - \lambda_1) \kappa_{n-1}}{\alpha_n (x; \kappa_n)}
    \det
    \begin{pmatrix}
    g_{11} (\lambda) & \dots & g_{1m} (\lambda) & h_{11} (\lambda_2) & \dots & h_{1(n-m)} (\lambda_2) \\
    g_{21} (\lambda) & \dots & g_{2m} (\lambda) & h_{21} (\lambda_2) & \dots & h_{2(n-m)} (\lambda_2) \\
    \vdots & \vdots & \vdots & \vdots & \vdots & \vdots \\
    g_{(n-2)1} (\lambda) & \dots & g_{(n-2)m} (\lambda) & h_{(n-2)1} (\lambda_2) & \dots & h_{(n-2)(n-m)} (\lambda_2) \\
    g_{n1} (\lambda) & \dots & g_{nm} (\lambda) & h_{n1} (\lambda_2) & \dots & h_{n(n-m)} (\lambda_2) \\
    0 & \dots & 0 & h_{11} (\lambda_2) & \dots & h_{1(n-m)} (\lambda_2) 
    \end{pmatrix},
\end{equation}
and
\begin{equation} \label{higher-det2}
    \frac{\lambda_2 - \lambda_1}{\kappa_2}
    \det
    \begin{pmatrix}
    g_{11} (\lambda) & \dots & g_{1m} (\lambda) & h_{11} (\lambda_2) & \dots & h_{1(n-m)} (\lambda_2) \\
    g_{21} (\lambda) & \dots & g_{2m} (\lambda) & h_{21} (\lambda_2) & \dots & h_{2(n-m)} (\lambda_2) \\
    \vdots & \vdots & \vdots & \vdots & \vdots & \vdots \\
    g_{(n-1)1} (\lambda) & \dots & g_{(n-1)m} (\lambda) & h_{(n-1)1} (\lambda_2) & \dots & h_{(n-1)(n-m)} (\lambda_2) \\
    0 & \dots & 0 & h_{21} (\lambda_2) & \dots & h_{2(n-m)} (\lambda_2)
    \end{pmatrix}.
\end{equation}

Applying Hadamard's inequality for the determinant of a matrix to each
of these last two determinants, we obtain the estimate 
\begin{equation} \label{higher-omega2-derivative}
    |\tilde{\omega}_2' (x; \lambda)|
    \le (\lambda_2 - \lambda_1) \{\frac{\kappa_{n-1}}{\alpha_n (x; \kappa_n)} + \frac{1}{\kappa_2}\} 
    |g_1 (x; \lambda)| \cdots |g_m (x; \lambda)|
    |h_1 (x; \lambda_2)| \cdots |h_{n-m} (x; \lambda_2)|,
\end{equation}
for all $(x, \lambda) \in [0,1] \times [\lambda_1, \lambda_2]$. 
Combining (\ref{higher-omega2-derivative}) with the final assertion of Proposition
\ref{invariance-proposition2}, 
we obtain the estimate 
\begin{equation} \label{higher-omega2-derivative2}
  |\frac{\tilde{\omega}_2' (x; \lambda)}{d (x; \lambda)}| 
  \le \frac{\lambda_2 - \lambda_1}{c_g c_h}
  \{\frac{\kappa_{n-1}}{\alpha_n (x; \kappa_n)} + \frac{1}{\kappa_2}\}. 
\end{equation}

\begin{remark} \label{normalization-remark} In the absence of 
normalization by $d(x; \lambda)$, we would need to obtain 
estimates directly on (\ref{higher-omega2-derivative}), which 
is problematic since we prefer to avoid computing the values 
of $\{g_i (x; \lambda)\}_{i=1}^m$. The use of normalization 
allows us to use the right-hand side of (\ref{higher-omega2-derivative2})
as our estimate. 
\end{remark}

\subsubsection{The Case $m = 1$}
\label{mequals1-section}

The case $m = 1$ is especially amenable to analysis, because in 
that case we have simply $\mathbb{G} (x; \lambda) = |g_1 (x; \lambda)|^2$,
from which it follows immediately from (\ref{lowercase-constants}) that 
$c_g = 1$, and (from (\ref{cg-inequality})) $C_g \le C_A$ (with 
$C_A$ as defined in Proposition \ref{invariance-proposition2}). 
Combining these observations, we 
see that in this case, the constants $C$ and $\delta$ from 
Proposition \ref{invariance-proposition1} can be taken to be 
\begin{equation} \label{mequals1}
    \begin{aligned}
    C &= 2 (C_A + C_h)
    + \max \{ 2 \max_{x \in [0, 1]} \Big|\frac{\alpha_{n-1} (x; \kappa_{n-1})}{\alpha_n (x; \kappa_n)}\Big|, 1 \} + 1 \\
    \delta &= \frac{\lambda_2 - \lambda_1}{c_h} \{ \max_{x \in [0, 1]} \frac{\kappa_{n-1}}{\alpha_n (x; \kappa_n)} + \frac{1}{\kappa_2}\}. 
    \end{aligned}
\end{equation}
Each of these values can be determined by computation along
the left shelf (see Section \ref{example1-section} for a 
detailed example case).

\subsubsection{The Case $m > 1$}
\label{m-greater1-section}

In the case $m > 1$, determination of the value 
$c_g$ becomes substantially more challenging. 
Nonetheless, we can make a general observation, adapted 
from \cite{BCCJM2022}. It's clear from (\ref{higher-omega2-derivative2}) 
that by taking  $\frac{\kappa_{n-1}}{\alpha_n (x; \kappa_n)}$ and $\frac{1}{\kappa_2}$ 
small, we can reduce 
$\delta$ as long as $c_g$ and $c_h$ remain uniformly bounded 
away from 0. As $\alpha_n (x; \kappa_n)$ becomes large relative to 
the other coefficients, (\ref{higher-order-equation})
is approximated by 
\begin{equation*}
    (\alpha_n (x; \kappa_n) \phi^{(n-1)})' = 0, 
\end{equation*}
allowing us to employ regular perturbation theory to show that indeed
$c_g$ and $c_h$ can be uniformly bounded away from 0. If, in addition,
$\rho (0; \lambda)$ remains uniformly bounded away from 0, we can conclude
invariance. We record the details of this observation in the following 
proposition, in which  
$\mathpzc{f} (x; \lambda)$ denotes the Grassmannian 
subspace with frame $\mathbf{F} (x; \lambda)$ specified
in (\ref{f-frame}), $\omega_1$ is specified in 
(\ref{omega1}), and $\omega_2$ is specified in 
(\ref{omega2-defined}).

\begin{proposition} \label{m-bigger-than-one-proposition}
Let $\lambda_1, \lambda_2 \in \mathbb{R}$, $\lambda_1 < \lambda_2$ be fixed.
In (\ref{higher-order-equation}), assume 
$\alpha_0, \alpha_1 \in C ([0, 1], \mathbb{R})$, and that for 
each $j \in \{2, 3, \dots, n\}$, 
$\alpha_j (x; \kappa_j) = \kappa_j \tilde{\alpha}_j (x; \kappa_j)$,
with $\tilde{\alpha}_j (\cdot; \kappa_j) \in C ([0, 1], \mathbb{R})$.
In addition, assume there exist constants $\{C_j\}_{j=2}^n$, along
with a constant $c_n > 0$, all independent of the values of 
$\{\kappa_j\}_{j=2}^n$, so that 
\begin{equation*}
    \max_{x \in [0, 1]} |\tilde{\alpha}_j (x; \kappa_j)| \le C_j
    \quad \forall \,\, j \in \{2, \dots, n\},
    \quad {\it and} \quad
    \max_{x \in [0, 1]} |\tilde{\alpha}_n (x; \kappa_n)| \ge c_n,
\end{equation*}
for all $\{\kappa_j\}_{j=2}^n$ for which 
\begin{equation}
    r := \max \{\frac{1}{\kappa_2}, \frac{\kappa_2}{\kappa_3}, \dots, \frac{\kappa_{n-1}}{\kappa_n}\}
\end{equation}
is sufficiently small. For boundary frames 
\begin{equation} \label{tri-frames}
    \mathbf{P} = 
    \begin{pmatrix}
    P_1 \\ \tilde{P} \\ P_n
    \end{pmatrix},
    \quad
    \mathbf{Q} = 
    \begin{pmatrix}
    Q_1 \\ \tilde{Q} \\ Q_n
    \end{pmatrix},
\end{equation}
with $P_1, P_n \in \mathbb{R}^{1 \times m}$, $\tilde{P} \in \mathbb{R}^{(n-2) \times m}$,
and likewise 
$Q_1, Q_n \in \mathbb{R}^{1 \times (n-m)}$, $\tilde{Q} \in \mathbb{R}^{(n-2) \times (n-m)}$,
suppose either 
\begin{equation} \label{bc-conditions1}
\det
\begin{pmatrix}
P_1 & Q_1 \\ \tilde{P} & \tilde{Q} \\P_n & Q_n - (\int_0^1 (\lambda_2 - \alpha_0 (\xi)) d \xi) Q_1
\end{pmatrix}    
\ne 0, \quad
{\it or} \quad
\det
\begin{pmatrix}
P_1 & Q_1 \\ \tilde{P} & \tilde{Q} \\0 & Q_1
\end{pmatrix}    
\ne 0.
\end{equation}
Then there exists a value $r_0 > 0$ sufficiently small so that for 
any values $\{\kappa_j\}_{j=2}^n$ for which $r \le r_0$
we have $\rho (x; \lambda) > 0$ for all $(x, \lambda) \in [0, 1] \times [\lambda_1, \lambda_2]$.
In particular,
the invariance condition specified in Definition \ref{invariance-definition}
is satisfied for the triple $(\mathpzc{f} (\cdot; \cdot), \omega_1, \omega_2)$
on $[0,1] \times [\lambda_1, \lambda_2]$, so $\mathfrak{m} = 0$ in 
Theorem \ref{main-theorem}. 
\end{proposition}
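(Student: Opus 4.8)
The plan is to realize the system as a regular perturbation, in the small parameter $r$, of an explicitly solvable limiting system, and then to invoke Proposition~\ref{invariance-proposition1} (recalling that relation (\ref{straightforward-relation}) holds for (\ref{higher-order-equation}), as established in Section~\ref{higher-order-section}, so that Propositions~\ref{invariance-proposition1} and~\ref{invariance-proposition2} apply). Write $A(x;\lambda) = A_0(x;\lambda) + B_r(x;\lambda)$, where $A_0(x;\lambda)$ is the $n\times n$ matrix all of whose entries vanish except the $(n,1)$ entry, which equals $\lambda - \alpha_0(x)$. Inspecting (\ref{higher-order-A}) and using the stated bounds on the $\tilde\alpha_j$ (in particular that $\tilde\alpha_n$ is bounded below away from $0$), every remaining entry of $A(x;\lambda)$ --- the superdiagonal entries $1/\kappa_2,\kappa_2/\kappa_3,\dots,\kappa_{n-1}/(\kappa_n\tilde\alpha_n)$ and the bottom-row entries $\alpha_1/\kappa_2,\kappa_2\tilde\alpha_2/\kappa_3,\dots,\kappa_{n-1}\tilde\alpha_{n-1}/(\kappa_n\tilde\alpha_n)$ --- is $O(r)$, so $\max_{[0,1]\times[\lambda_1,\lambda_2]}\|B_r(x;\lambda)\| \le Kr$ for a constant $K$ independent of the $\kappa_j$. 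In particular $C_A$ from (\ref{cap-a-defined}), and hence $C_a$ from (\ref{ca-defined}), remain bounded as $r\to 0$.

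Next I would solve the limiting problems. With $c(x;\lambda):=\int_0^x(\lambda-\alpha_0(\xi))\,d\xi$, the solution of $\mathbf{G}_0' = A_0(\cdot;\lambda)\mathbf{G}_0$, $\mathbf{G}_0(0;\lambda)=\mathbf{P}$, is $\mathbf{G}_0(x;\lambda)=(I+c(x;\lambda)E_{n1})\mathbf{P}$, with $E_{n1}$ the matrix unit: its first $n-1$ rows are those of $\mathbf{P}$ and its last row is $P_n+c(x;\lambda)P_1$. Likewise the solution of $\mathbf{H}_0'=A_0(\cdot;\lambda_2)\mathbf{H}_0$, $\mathbf{H}_0(1;\lambda_2)=\mathbf{Q}$, has first $n-1$ rows those of $\mathbf{Q}$ and last row $Q_n-(\int_x^1(\lambda_2-\alpha_0(\xi))\,d\xi)Q_1$. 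Because $I+c\,E_{n1}$ is unimodular, these limiting frames have full column rank at every point of the compact box, so the volume ratios $d_g/(|g_1|\cdots|g_m|)$ and $d_h/(|h_1|\cdots|h_{n-m}|)$ associated with them are bounded below by a fixed positive constant. A Gronwall estimate based on $\|B_r\|\le Kr$ and the (uniform) bound on $\|A_0\|$ shows $\mathbf{G}(\cdot;\cdot)\to\mathbf{G}_0$ and $\mathbf{H}(\cdot;\lambda_2)\to\mathbf{H}_0(\cdot;\lambda_2)$ uniformly on $[0,1]\times[\lambda_1,\lambda_2]$ as $r\to 0$; with compactness this gives that $c_g$ and $c_h$ from (\ref{lowercase-constants}) are bounded below by a fixed positive constant for all small $r$. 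Feeding this into Proposition~\ref{invariance-proposition2} and (\ref{d-sum}) shows $C_d$ is bounded, hence the constant $C$ of Proposition~\ref{invariance-proposition1} stays bounded; feeding it into (\ref{higher-omega2-derivative2}) shows the constant $\delta$ of Proposition~\ref{invariance-proposition1} is $O(r)$, so $\delta\to 0$.

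It remains to bound $\rho(0;\lambda)$ from below uniformly in $r$. As noted in Section~\ref{proof-section}, $\rho(0;\lambda)$ is independent of $\lambda$; moreover $\tilde\omega_1(0;\lambda)=\det(\mathbf{P}\,\,\mathbf{H}(0;\lambda_2))$, while the characterization of $\tilde\omega_2$ in Section~\ref{higher-order-section} via (\ref{omega2general}) gives that $\tilde\omega_2(0;\lambda)$ equals $(\lambda_2-\lambda_1)$ times the determinant of $(\mathbf{P}\,\,\mathbf{H}(0;\lambda_2))$ with its last row replaced by $(0,\dots,0,h_{11}(0;\lambda_2),\dots,h_{1(n-m)}(0;\lambda_2))$. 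Letting $r\to 0$ and using that the rows of $\mathbf{H}_0(0;\lambda_2)$ are $Q_1$, $\tilde Q$, and $Q_n-(\int_0^1(\lambda_2-\alpha_0(\xi))\,d\xi)Q_1$, these two quantities converge respectively to the first determinant, and to $(\lambda_2-\lambda_1)$ times the second determinant, appearing in (\ref{bc-conditions1}). Since $d(0;\lambda)=d_g(0;\lambda)\,d_h(0;\lambda_2)$ is bounded above (the frames are bounded), hypothesis (\ref{bc-conditions1}) forces $\rho(0;\lambda)=\rho(0)\ge\rho_0>0$ for all $r$ below some threshold, with $\rho_0$ independent of $r$. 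Then, for $r$ sufficiently small, the invariance criterion (\ref{invariance-criterion}) holds, because its right-hand side is $O(\delta^2)=O(r^2)\to 0$ while its left-hand side is $\ge\rho_0$; Proposition~\ref{invariance-proposition1} then gives $\rho(x;\lambda)>0$ throughout $[0,1]\times[\lambda_1,\lambda_2]$, i.e.\ invariance of the triple $(\mathpzc{f}(\cdot;\cdot),\omega_1,\omega_2)$ there, so $\mathfrak{m}=0$ by the homotopy argument noted just before the statement of Theorem~\ref{main-theorem}.

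The main obstacle is the second step: extracting, purely from the structure of (\ref{higher-order-A}) and the hypotheses on the $\tilde\alpha_j$ and on $\mathbf{P},\mathbf{Q}$, bounds that are genuinely uniform in $\lambda\in[\lambda_1,\lambda_2]$ and independent of the individual $\kappa_j$ for $r$ small --- namely the $O(r)$ bound on $\|B_r\|$, the positive lower bounds on $c_g$ and $c_h$, and the identification of the $r\to 0$ limits of $\tilde\omega_1(0;\lambda)$ and $\tilde\omega_2(0;\lambda)$ with the two determinants in (\ref{bc-conditions1}). Once these are in place, the remainder is routine bookkeeping of constants through Propositions~\ref{invariance-proposition1} and~\ref{invariance-proposition2}.
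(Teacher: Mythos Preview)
Your proposal is correct and follows essentially the same approach as the paper: decompose $A = A_0 + B_r$ with $A_0$ carrying only the $(n,1)$ entry $\lambda-\alpha_0(x)$, solve the limiting system explicitly to obtain the leading-order frames, use the resulting uniform lower bounds on $c_g,c_h$ together with (\ref{higher-omega2-derivative2}) and Proposition~\ref{invariance-proposition2} to force $\delta\to 0$ while keeping $C$ bounded, and identify the $r\to 0$ limits of $\tilde\omega_1(0;\lambda)$ and $\tilde\omega_2(0;\lambda)$ with the two determinants in (\ref{bc-conditions1}) to bound $\rho(0;\lambda)$ below. The paper's argument is organized identically; your only notable addition is making the Gronwall step and the upper bound on $d(0;\lambda)$ explicit.
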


\begin{proof}
Under our assumptions, we can apply regular perturbation theory to 
see that with $A (x; \lambda)$ specified as in (\ref{higher-order-A}) 
solutions to (\ref{nonhammy}) will satisfy 
\begin{equation*}
    y(x; \lambda) = y_0 (x; \lambda) + \mathbf{O} (r), 
\end{equation*}
where $y_0 (x; \lambda)$ solves the system $y_0' = A_0 (x; \lambda) y_0$, 
with $A_0 (x; \lambda)$ the $n \times n$ matrix with only a single non-zero entry, 
$a_{n1} (x; \lambda) = \lambda - a_0 (x)$ and $\mathbf{O} (r)$ uniform for 
$x \in [0,1]$. If we express a generic 
initial vector as $p = (p_1, \tilde{p}, p_n)^T$ with $p_1 \in \mathbb{R}$,
$\tilde{p} \in \mathbb{R}^{n-2}$, and $p_n \in \mathbb{R}$, and solve 
$y_0' = A_0 (x; \lambda) y_0$ subject to $y_0 (0) = p$,
we find $y(x; \lambda) = (p_1, \tilde{p}, p_n + p_1 \int_0^x (\lambda - a_0 (\xi)) d\xi)$.
Using this, and proceeding similarly for $y_0' = A_0 (x; \lambda_2) y_0$ 
initialized at $x = 1$ with $y_0 (1) = q = (q_1, \tilde{q}, q_n)^T$, 
we find that our frames $\mathbf{G} (x; \lambda)$
and $\mathbf{H} (x; \lambda_2)$ specified respectively in (\ref{g-frame})
and (\ref{h-frame}) satisfy the relations 
\begin{equation}
\begin{aligned}
    \mathbf{G} (x; \lambda)
    &= \begin{pmatrix}
    P_1 \\ \tilde{P} \\ P_n + (\int_0^x (\lambda - a_0 (\xi)) d \xi) P_1
\end{pmatrix} + \mathbf{O} (r), \\
    \mathbf{H} (x; \lambda_2)
    &= \begin{pmatrix}
    Q_1 \\ \tilde{Q} \\ Q_n - (\int_x^1 (\lambda_2 - a_0 (\xi)) d \xi) Q_1
\end{pmatrix} + \mathbf{O} (r).
\end{aligned}
\end{equation}

Since the lowest order frames are independent of $r$, we see 
that the constants $c_g$ and $c_h$ specified in (\ref{lowercase-constants})
can be bounded below for $r$ sufficiently small by positive constants
independent of the values $\{\kappa_j\}_{j=2}^n$. With this 
observation, along with (\ref{higher-omega2-derivative2}), we see
that we can make $\delta$ as small as we like by choosing $r_0$ 
sufficiently small. In addition, using the estimates from Proposition
\ref{invariance-proposition2}, we see that the value of the constant 
$C$ in Proposition \ref{invariance-proposition1} can be bounded above, independently 
of $r$ (as long as $r \le r_0$). In order to conclude that (\ref{invariance-criterion})
from Proposition \ref{invariance-proposition1} holds, we need only show that 
$\rho (0; \lambda)$ can be bounded below, again independently of 
$r$. For this, we can write 
\begin{equation*}
    \tilde{\omega}_1 (0; \lambda)
    = \det
    \begin{pmatrix}
    P_1 & Q_1 \\ \tilde{P} & \tilde{Q} \\P_n & Q_n - (\int_0^1 (\lambda_2 - a_0 (\xi)) d \xi) Q_1
\end{pmatrix}  
+ \mathbf{O} (r),
\end{equation*}
and 
\begin{equation*}
    \tilde{\omega}_2 (0; \lambda)
    = (\lambda_2 - \lambda_1) \det
    \begin{pmatrix}
    P_1 & Q_1 \\ \tilde{P} & \tilde{Q} \\ 0 & Q_1
\end{pmatrix}  
+ \mathbf{O} (r).
\end{equation*}
The conditions stated in the proposition are precisely that 
at least one of these determinants is non-zero, ensuring that 
$\tilde{\omega}_1 (0; \lambda)^2 + \tilde{\omega}_2 (0; \lambda)^2 > 0$.
In addition, since the columns of the lowest order 
matrices in $\mathbf{G} (0; \lambda)$ and $\mathbf{H} (0; \lambda_2)$
are necessarily linearly independent and independent of 
the values $\{\kappa_j\}_{j=2}^n$, we can conclude that 
the values $d_g (0; \lambda)$ and $d_h (0; \lambda_2)$ are 
both bounded below independently of the values $\{\kappa_j\}_{j=2}^n$. 
The necessary bound below on $\rho (0; \lambda)$ follows, and this 
completes the proof. 
\end{proof}

\begin{remark}
Condition (\ref{bc-conditions1}) in Proposition \ref{m-bigger-than-one-proposition}
is easily seen to hold in many important cases. As a specific family of examples, 
suppose $n$ is even and the boundary frames are $\mathbf{P} = {0 \choose I}$ 
and $\mathbf{Q} = {I \choose \Phi}$ for some $\frac{n}{2} \times \frac{n}{2}$
matrix $\Phi$. Then 
\begin{equation} \label{remark-eq}
    \begin{pmatrix}
P_1 & Q_1 \\ \tilde{P} & \tilde{Q} \\P_n & Q_n - \int_0^1 (\lambda_2 - a_0 (\xi)) d \xi Q_1
\end{pmatrix}    
= \begin{pmatrix}
0 & I \\
I & \cdots
\end{pmatrix},
\end{equation}
where the dots indicate that the lower right $\frac{n}{2} \times \frac{n}{2}$ matrix
is irrelevant for this calculation. Since the determinant of the right-hand 
side of (\ref{remark-eq}) is non-zero, condition (\ref{bc-conditions1})
is satisfied in this case. On the other hand, it's clear that if 
the boundary frames $\mathbf{P}$ and $\mathbf{Q}$ are both Dirichlet 
(i.e., $\mathbf{P} = \mathbf{Q} = {0 \choose I}$), then both determinants in 
(\ref{bc-conditions1}) are 0, and the condition is not satisfied. 
\end{remark}

\subsubsection{Example Case}
\label{example1-section}


As a specific example case, we consider the single third-order equation 
\begin{equation} \label{third-order}
    \alpha_3 \phi''' + \alpha_2 \phi'' + \alpha_1 (x) \phi' + \alpha_0 (x) \phi = \lambda \phi,
\end{equation}
with coefficient values
\begin{equation} \label{a0-a3}
    \alpha_0 (x) = .2 \cos (10x) - .5 \cos (x/10); 
    \quad \alpha_1 (x) = 2 \sin (5x); 
    \quad \alpha_2 = 10; 
    \quad \alpha_3 = 60, 
\end{equation}
and boundary conditions 
\begin{equation*}
    \phi' (0) = 0; \quad \phi'' (0) = 0; \quad \phi''(1) = 0. 
\end{equation*}
(This example is purely for purposes of illustration and doesn't correspond
with any particular physical problem.)
In this case, it's natural to take $\kappa_i = \alpha_i$,
$i = 2,3$,
and we see from (\ref{higher-order-A}) that 
\begin{equation*}
    A(x; \lambda) =
    \begin{pmatrix}
    0 & \frac{1}{\alpha_2} & 0 \\
    0 & 0 & \frac{\alpha_2}{\alpha_3} \\
    \lambda - \alpha_0 (x) & - \frac{\alpha_1 (x)}{\alpha_2} & - \frac{\alpha_2}{\alpha_3}
    \end{pmatrix}.
\end{equation*}

Referring to our general framework, this corresponds with the case 
$m = 1$, and we can take the frames for $\mathpzc{p}$ 
and $\mathpzc{q}$ to respectively be 
\begin{equation} \label{example1-initial-frames}
    \mathbf{P}
    = \begin{pmatrix}
    1 \\ 0 \\ 0
    \end{pmatrix}
    \quad {\rm and} \quad 
     \mathbf{Q}
    = \begin{pmatrix}
    1 & 0 \\ 0 & 1 \\ 0 & 0
    \end{pmatrix}.
\end{equation}
We search for eigenvalues on the interval $[\lambda_1, \lambda_2] = [-1, 0]$.

In order to check the invariance condition of Lemma 
\ref{invariance-proposition1}, we compute $C$ and $\delta$ using 
(\ref{mequals1}), along with 
$\rho (0; \lambda)$. For this, we need values for $C_A$ (from 
(\ref{cap-a-defined})), $C_h$ (from (\ref{uppercase-constants})),
and $c_h$ (from (\ref{lowercase-constants})). The value $C_A$
can be determined directly (i.e., without solving (\ref{third-order})), 
and we find $C_A = .7481$. The 
values $C_h$ and $c_h$ are both computed by numerical evaluation 
of the frame $\mathbf{H} (x; \lambda_2)$, and we find 
$C_h = .2621$ and $c_h = .9975$. With these values, we compute 
\begin{equation*}
        C = 2 (C_A + C_h) + \max\{2 \frac{\alpha_2}{\alpha_3},1\} + 1 
        = 2 (.7481 + .2621) + 1 + 1 = 4.0202,
\end{equation*}
and 
\begin{equation*}
    \delta = \frac{\lambda_2 - \lambda_1}{c_h} \Big{\{} \frac{1}{6} + \frac{1}{10} \Big{\}}
    = \frac{1}{.9975} \cdot \frac{4}{15} = .2673.
\end{equation*}

We evaluate $\rho (0; \lambda)$ from the exact frame $\mathbf{G} (0; \lambda)$
and the numerically generated frame $\mathbf{H} (0; \lambda_2)$, and we 
find $\rho (0; \lambda) = .5000$. It follows that 
\begin{equation*}
    \rho (0; \lambda) - \frac{\delta^2}{2C} (e^C - 1) = .0136 > 0,
\end{equation*}
verifying that our invariance criterion is satisfied. 

We are now justified in using Theorem \ref{main-theorem} with $\mathfrak{m} = 0$ 
to compute a lower bound on the number of eigenvalues that (\ref{third-order})
has on the interval $[-1, 0]$. We proceed by numerically computing 
the generalized Maslov index $\ind (\mathpzc{g} (\cdot; -1), \mathpzc{h} (\cdot; 0); [0, 1])$.
The flow is necessarily monotonic, and we find a single crossing 
point at about $x = .535$ (with a stepsize in the computation of $.005$). 
We can conclude that (\ref{third-order}) has at least one eigenvalue 
on the interval $[-1, 0]$. Although this conclusion requires only a 
computation along the left shelf, the entire Maslov box for this example
is depicted in Figure \ref{eg1_figure}. In this case, we see that 
(\ref{third-order}) has only a single eigenvalue on the interval
$[-1, 0]$, located at about $\lambda = -.513$ (with a 
stepsize in the computation of $.001$). 

\begin{figure}[ht] 
\begin{center}\includegraphics[%
  width=12cm,
  height=8cm]{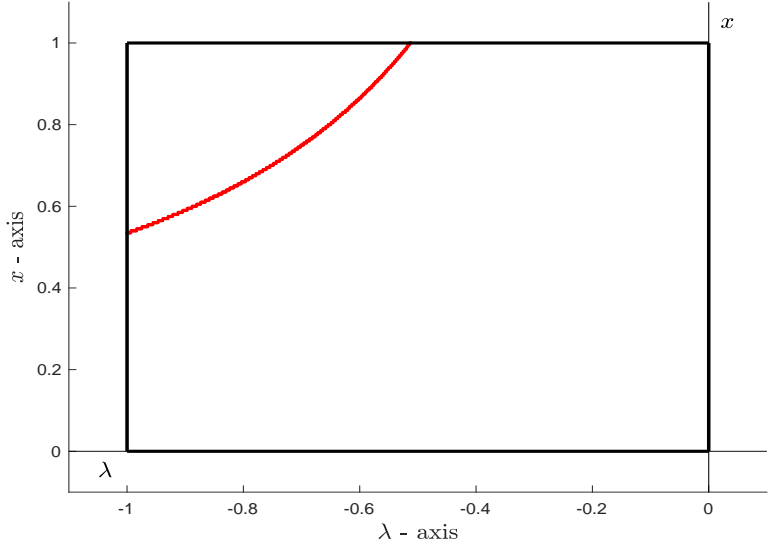}
\end{center}
\caption{Full Maslov Box and spectral curve for (\ref{third-order}).  
\label{eg1_figure}}
\end{figure}

\subsection{Second-Order Systems} 
\label{second-order-section}

In this section, we consider eigenvalue problems of the general form 
\begin{equation} \label{second-order-equation}
   -B \phi '' + W(x) \phi' + V(x) \phi = \lambda \phi,
   \quad x \in (0, 1), \quad \phi (x; \lambda) \in \mathbb{R}^l,
   \quad l \in \mathbb{N},
\end{equation}
for which we take $W, V \in C([0,1],\mathbb{R}^{l \times l})$
and assume for simplicity of the invariance verification 
that $B$ is a constant diagonal matrix
with positive diagonal entries $\{b_i\}_{i=1}^l$. As noted in the 
introduction, such equations arise naturally when 
we linearize a viscous conservation law (\ref{vcl-eqn}) 
about a viscous profile $\bar{u} (x - st)$. 

In order to place this system in the setting of 
(\ref{nonhammy}), we write $y = {y_1 \choose y_2}$
with $y_1 = \phi$ and $y_2 = B \phi'$, giving 
(\ref{nonhammy}) with $n = 2l$ and
\begin{equation} \label{second-order-A}
    A(x; \lambda)
    = \begin{pmatrix}
    0 & B^{-1} \\
    V(x) - \lambda I & W(x) B^{-1}
    \end{pmatrix},
\end{equation}
from which it's clear that our Assumption {\bf (A)} holds
in this case. Computing directly, we see that 
\begin{equation*}
    a_{i j} (x; \lambda_2) - a_{i j} (x; \lambda)
    = \begin{cases}
    - (\lambda_2 - \lambda) & (i, j) = (l+k,k), \,\, k \in \{1, \dots, l\} \\
    0 & \textrm{otherwise},
    \end{cases}
\end{equation*}
allowing us to conclude that {\bf (B)} holds as well. It follows that we can 
apply Theorem \ref{main-theorem} as long as we can verify the 
invariance condition specified in Definition \ref{invariance-definition}.

Following our general development, we fix any $m \in \{1, 2, \dots, 2l - 1\}$ 
and let 
$\mathbf{G} (x; \lambda) \in \mathbb{R}^{2l \times m}$ 
and $\mathbf{H} (x; \lambda) \in \mathbb{R}^{2l \times (2l-m)}$
be as specified respectively in (\ref{g-frame})
and (\ref{h-frame}). Then 
\begin{equation*}
    \tilde{\omega}_1 (x; \lambda) 
    = \det (\mathbf{G} (x; \lambda) \,\,\, \mathbf{H} (x; \lambda_2)),
\end{equation*}
and 
\begin{equation*}
    \tilde{\omega}_2 (x; \lambda)
    = \sum_{i=1}^{2l} \tilde{\omega}^i_2 (x; \lambda, \lambda_1),
\end{equation*}
where the functions $\{\tilde{\omega}^i_2 (x; \lambda, \lambda_1)\}_{i=1}^{2l}$ 
are as in (\ref{omega2general}). 

For invariance, we will focus on the case $m = l$, for which 
the boundary spaces $\mathpzc{p}$ and $\mathpzc{q}$ from 
(\ref{bc}) both have the same dimension $l$, and we will consider 
two cases of boundary conditions. For this we will refer to 
$\mathpzc{p}$ or $\mathpzc{q}$ as a Robin space if it has 
a frame of the form ${I \choose \Phi}$ for some $l \times l$
matrix $\Phi$. 

In the following proposition,
$\mathpzc{f} (x; \lambda)$ denotes the Grassmannian 
subspace with frame $\mathbf{F} (x; \lambda)$ specified
in (\ref{f-frame}), $\omega_1$ is specified in 
(\ref{omega1}), and $\omega_2$ is specified in 
(\ref{omega2-defined}).

\begin{proposition} \label{proposition2}
In (\ref{second-order-equation}), assume $W, V \in C([0,1],\mathbb{R}^{l \times l})$
and that $B$ is a constant diagonal matrix
with positive diagonal entries $\{b_i\}_{i=1}^l$. For boundary 
spaces $\mathpzc{p}$ and $\mathpzc{q}$ as specified in 
(\ref{bc}), suppose $\mathpzc{p}$ is Dirichlet and 
$\mathpzc{q}$ is Robin, or alternatively suppose 
$\mathpzc{q}$ is Dirichlet and 
$\mathpzc{p}$ is Robin. Then there exists a value 
$r_0 > 0$ sufficiently small so that for any values 
$\{b_i\}_{i=1}^l$ satisfying 
\begin{equation*}
    r := \max \{\frac{1}{b_1}, \frac{1}{b_2}, \dots, \frac{1}{b_n}\} \le r_0,
\end{equation*}
we have $\rho (x; \lambda) > 0$ for all 
$(x, \lambda) \in [0, 1] \times [\lambda_1, \lambda_2]$. In particular,
the invariance condition specified in Definition \ref{invariance-definition}
is satisfied for the triple $(\mathpzc{f} (\cdot; \cdot), \omega_1, \omega_2)$
on $[0,1] \times [\lambda_1, \lambda_2]$, so $\mathfrak{m} = 0$ in 
Theorem \ref{main-theorem}.  
\end{proposition}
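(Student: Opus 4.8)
The plan is to adapt the perturbative argument already used in the proof of Proposition~\ref{m-bigger-than-one-proposition}, the small parameter now being $r = \max_k b_k^{-1}$. Writing $y = (y_1,y_2)$ with $y_1,y_2 \in \mathbb{R}^l$, the system (\ref{nonhammy})--(\ref{second-order-A}) reads $y_1' = B^{-1}y_2$, $y_2' = (V(x)-\lambda I)y_1 + W(x)B^{-1}y_2$, so it is an $\mathbf{O}(r)$ perturbation, uniform in $(x,\lambda)$, of the limiting system $y_1' = 0$, $y_2' = (V(x)-\lambda I)y_1$, whose coefficient matrix is bounded independently of $r$. Regular perturbation theory then produces $\mathbf{G}(x;\lambda)$ and $\mathbf{H}(x;\lambda_2)$ as $\mathbf{O}(r)$ perturbations, in the $C^1$-in-$x$ topology and uniformly in $\lambda$, of the frames obtained by integrating the limiting system from the boundary data. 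First I would record that the limiting system keeps $y_1$ constant and makes $y_2$ affine in $x$, so a Dirichlet frame $\binom{0}{I}$ is left exactly invariant while a Robin frame $\binom{I}{\Phi}$ evolves with its lower block affine in $x$; with $m = l$ as fixed above, in either case of the hypothesis the limiting frames therefore have the block forms $\binom{0}{I}$ and $\binom{I}{\Psi(x)}$ (in the order dictated by which of $\mathpzc{p}$, $\mathpzc{q}$ is Dirichlet). Their columns are linearly independent with norms and Gram determinants bounded away from $0$ and $\infty$ uniformly in $r$, so for $r$ small the constants $c_g$, $c_h$ of (\ref{lowercase-constants}) and the normalization $d(x;\lambda)$ of (\ref{f-normalization}) are bounded below, and $C_A$ of (\ref{cap-a-defined}), $C_a$ of (\ref{ca-defined}), and hence the constant $C$ of Proposition~\ref{invariance-proposition1} (estimated through Proposition~\ref{invariance-proposition2}), are bounded above, all uniformly in $r$.

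Next I would pin down the two quantities that genuinely depend on $r$. From the expression for $a_{ij}(x;\lambda_2)-a_{ij}(x;\lambda)$ recorded above one gets $\mathcal{S}_i(\nu,\lambda_2)h_j = -(\lambda_2-\nu)h_{kj}$ when $i = l+k$ and $\mathcal{S}_i(\nu,\lambda_2)h_j = 0$ for $i \le l$, so each $\tilde{\omega}_2^i(x;\lambda,\nu)$ of (\ref{omega2general}) is $(\lambda_2-\nu)$ times a determinant independent of $\nu$; this yields (\ref{straightforward-relation}) and, after summing on $i$, the identity $\tilde{\omega}_2(x;\lambda) = -(\lambda_2-\lambda_1)\sum_{k=1}^l M_k(x;\lambda)$, where $M_k$ is the determinant of $(\mathbf{G}(x;\lambda)\;\mathbf{H}(x;\lambda_2))$ with its $(l+k)$-th row replaced by the $k$-th row of the upper ($y_1$) block of $\mathbf{H}(x;\lambda_2)$, padded with zeros in the $\mathbf{G}$-columns. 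The crucial point is that on the limiting frames $M_k \equiv 0$ in $x$: if $\mathpzc{q}$ is Dirichlet the upper block of $\mathbf{H}$ vanishes, so the replacement row is zero; if $\mathpzc{p}$ is Dirichlet, the limiting matrix has block form $\begin{pmatrix} 0 & I \\ I & \Psi \end{pmatrix}$ and the replacement sends the bottom-left block to a matrix with a zero row. Since $\tilde{\omega}_2$ is a fixed polynomial in the frame entries vanishing identically on the limiting frames, and the frames are $\mathbf{O}(r)$-perturbations of these in $C^1_x$, both $\tilde{\omega}_2(x;\lambda)$ and $\partial_x\tilde{\omega}_2(x;\lambda)$ are $\mathbf{O}(r)$ uniformly, so the constant $\delta$ of Proposition~\ref{invariance-proposition1} may be taken to be $\mathbf{O}(r)$, hence as small as desired. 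For the other quantity: since $\mathbf{G}(0;\lambda) = \mathbf{P}$ and $\mathbf{H}(0;\lambda_2)$ do not depend on $\lambda$, $\rho(0;\lambda)$ is $\lambda$-independent, and on the limiting frames $\tilde{\omega}_1(0)$ is, up to sign, the determinant of a block matrix with identity blocks on the anti-diagonal, hence $\pm 1 \neq 0$; so $\rho(0;\lambda)$ is bounded below uniformly in $r$.

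Putting these together: $\rho(0;\lambda)$ bounded below, $C$ bounded above, and $\delta = \mathbf{O}(r)$, all uniformly in $r$; so for $r_0$ small enough the criterion (\ref{invariance-criterion}) of Proposition~\ref{invariance-proposition1} holds whenever $r \le r_0$, giving $\rho(x;\lambda) > 0$ on $[0,1]\times[\lambda_1,\lambda_2]$. This is exactly invariance of $(\mathpzc{f}(\cdot;\cdot),\omega_1,\omega_2)$ on the Maslov box, so $\mathfrak{m} = 0$ in Theorem~\ref{main-theorem}.

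The step I expect to be the main obstacle is establishing the identity ``$\tilde{\omega}_2 \equiv 0$ on the limiting frames,'' since that is precisely what forces $\delta = \mathbf{O}(r)$ rather than merely $\delta = \mathbf{O}(1)$; carrying it out cleanly requires unwinding the definition (\ref{omega2general}) of $\tilde{\omega}_2^i$ together with the block structure of the Dirichlet/Robin frames. A secondary, purely technical, matter is to make the perturbative bounds uniform in $r$ down to $r = 0$ and to check that $c_g$, $c_h$, $d$ remain bounded below there; this is routine, but must be handled with some care, since $C_A$ does \emph{not} vanish as $r \to 0$ (the $(V-\lambda I)$ block survives), so it is the smallness of $\delta$, not of $C$, that drives the argument.
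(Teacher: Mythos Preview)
Your proposal is correct and follows the same overall perturbative scaffold as the paper (regular perturbation of frames as $r \to 0$, uniform bounds on $c_g, c_h, d, C_A, C$, positivity of $\rho(0;\lambda)$ via $\tilde{\omega}_1(0) = \pm 1$ on the limiting block structure, then Proposition~\ref{invariance-proposition1}). Where you genuinely diverge is in the control of $\delta$. The paper does not argue that $\tilde{\omega}_2$ vanishes on the limiting frames; instead it differentiates $\tilde{\omega}_2^{l+1}$ (and by analogy the remaining $\tilde{\omega}_2^i$) row by row, and for each of the $2l$ resulting determinants either (i) picks up an explicit factor $1/b_j$ via the relation $g_{kj}' = b_k^{-1} g_{(l+k)j}$ and bounds by Hadamard, or (ii) reduces, after row operations, to a determinant that is seen to be $\mathbf{O}(r)$ by evaluating on the lowest-order frames. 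Your route is shorter and more conceptual: you observe once that $\tilde{\omega}_2 \equiv 0$ on the limiting frames (because either the replacement row is zero when $\mathpzc{q}$ is Dirichlet, or two rows coincide---equivalently, the bottom-left block acquires a zero row and the block determinant collapses---when $\mathpzc{p}$ is Dirichlet), and then invoke $C^1_x$ convergence of the frames to get $\partial_x\tilde{\omega}_2 = \mathbf{O}(r)$ in one stroke. What the paper's approach buys is an explicit, term-by-term structure that could in principle be turned into quantitative constants; what yours buys is a clean argument that handles both Dirichlet/Robin configurations symmetrically without the lengthy determinant bookkeeping.
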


\begin{proof}
Since the analysis is similar for each case, we carry out details 
only for the case in which (\ref{second-order-equation})
has Dirichlet boundary conditions at $x = 1$ and Robin boundary 
conditions at $x = 0$. 

Following the general development of Section \ref{invariance-section}, we 
see immediately that the values $C_a$ and $C_A$ can be bounded
independently of the values $\{b_i\}_{i=1}^l$ (for $r \le r_0$). In order to 
apply our general framework, we additionally need to verify 
that the values $c_g$ and $c_h$ specified in (\ref{lowercase-constants})
are bounded below uniformly as the values $\{b_i\}_{i=1}^l$
grow, and that by choosing the values $\{b_i\}_{i=1}^l$
sufficiently large we can make $\frac{\tilde{\omega}_2' (x; \lambda)}{d(x; \lambda)}$
as small as we like (without increasing the value of $C$). 

Beginning with the values $c_g$ and $c_h$, we notice that by 
regular perturbation theory for large values of $\{b_i\}_{i=1}^l$ 
the lowest order expression in a perturbation expansion for 
solutions of (\ref{nonhammy}) with (\ref{second-order-A}) solves 
the equation
\begin{equation*}
    y_0' = A_0 (x; \lambda) y_0,
    \quad 
    A_0 (x; \lambda)
    = 
    \begin{pmatrix}
        0 & 0 \\
        V(x) - \lambda I & 0
    \end{pmatrix}.
\end{equation*}
For $\mathbf{G} (x; \lambda)$, we take the boundary condition
$\mathbf{G} (0; \lambda) = {I \choose \Theta}$, and if we 
let $\mathbf{G}_0 (x; \lambda) = {G_0 (x; \lambda) \choose G_{00} (x; \lambda)}$
denote the lowest order term in a perturbation expansion for 
$\mathbf{G} (x; \lambda)$, then 
\begin{equation*} 
    G_0' (x; \lambda) = 0; \quad G_0 (0; \lambda) = I; 
    \quad \quad G_{00}' (x; \lambda) = (V(x) - \lambda I) G_0 (x; \lambda); 
    \quad G_{00} (0; \lambda) = \Theta.
\end{equation*}
Solving this system by integration, we conclude that 
\begin{equation} \label{lowest-order-G}
    \mathbf{G}_0 (x; \lambda)
    = \begin{pmatrix}
    I \\
    \mathcal{G} (x; \lambda)
    \end{pmatrix},
    \quad 
    \mathcal{G} (x; \lambda) = \Theta + \int_0^x (V(\xi) - \lambda I) d\xi,
\end{equation}
for all $x \in [0, 1]$. 

Likewise, for $\mathbf{H} (x; \lambda_2)$, we take the boundary condition
$\mathbf{H} (1; \lambda_2) = {0 \choose I}$, and if we 
let $\mathbf{H}_0 (x; \lambda_2) = {H_0 (x; \lambda_2) \choose H_{00} (x; \lambda_2)}$
denote the lowest order term in a perturbation expansion for 
$\mathbf{H} (x; \lambda_2)$, then 
\begin{equation*} 
    H_0' (x; \lambda_2) = 0; \quad H_0 (0; \lambda_2) = 0; 
    \quad \quad H_{00}' (x; \lambda_2) = (V(x) - \lambda_2 I) H_0 (x; \lambda_2); 
    \quad H_{00} (0; \lambda_2) = I.
\end{equation*}
Solving this system by integration, we conclude that 
\begin{equation} \label{lowest-order-H}
    \mathbf{H}_0 (x; \lambda_2)
    = \begin{pmatrix}
    0 \\
    I
    \end{pmatrix},
\end{equation}
for all $x \in [0, 1]$. 

Similarly as in the proof of Proposition \ref{m-bigger-than-one-proposition}, 
we can conclude that the values $c_g$ and $c_h$, viewed as functions 
of the values $\{b_i\}_{i=1}^l$, can be bounded below by 
positive constants that are independent of the value $r$ specified 
in Proposition \ref{proposition2} (as long as $r \le r_0$), 
and likewise we can conclude
that there exists a value $d_{\min} > 0$, independent 
of the values $\{b_i\}_{i=1}^n$, so that 
\begin{equation*}
    d(x; \lambda) \ge d_{\min},
    \quad \forall \,\, (x, \lambda, r) \in [0, 1] \times [\lambda_1, \lambda_2] \times [0, r_0]. 
\end{equation*}

Turning now to the ratio $\frac{\tilde{\omega}_2' (x; \lambda)}{d (x; \lambda)}$,
we first observe that in this case, 
\begin{equation} \label{mathcal-S-second-order}
    \mathcal{S}_i (\lambda,\lambda_2) h_j 
    := \begin{cases}
    - (\lambda_2 - \lambda) h_{i - l, j} & (i, j) \in \{l+1, \dots, 2l\} \times \{1, \dots, l\} \\
    0 & \textrm{otherwise},
    \end{cases}
\end{equation}
from which we immediately see from (\ref{omega2general}) that
$\tilde{\omega}_2^i (x; \lambda, \lambda_1) \equiv 0$ 
for all $i \in \{1, 2, \dots, l\}$. In order to understand the remaining
functions $\{\tilde{\omega}^i_2 (x; \lambda, \lambda_1)\}_{i=l+1}^{2l}$ 
in this case, we focus on $i = l + 1$ for which 
(from (\ref{omega2general})) $\tilde{\omega}_2^{l+1} (x; \lambda, \lambda_1)$
is the determinant of the matrix obtained by replacing the 
$(l+1)^{\textrm{th}}$ row of $(\mathbf{G} (x; \lambda) \,\, \mathbf{H} (x; \lambda_2))$
with 
\begin{equation*}
    \begin{aligned}
    &\begin{pmatrix}
    0 & \dots & 0
    & \mathcal{S}_{l+1} (\lambda_1, \lambda_2) h_1 (\lambda_2)  & \dots & \mathcal{S}_{l+1} (\lambda_1, \lambda_2) h_l (\lambda_2) \\
    \end{pmatrix} \\
    & = 
    \begin{pmatrix}
    0 & \dots & 0
    & - (\lambda_2 - \lambda_1) h_{11} (x; \lambda_2)  & \dots & - (\lambda_2 - \lambda_1) h_{1l} (x; \lambda_2) 
    \end{pmatrix}.
    \end{aligned}
\end{equation*}
From this relation, and similar relations for $\{\tilde{\omega}_2^i (x; \lambda, \lambda_1)\}_{i = l + 2}^{2l}$
and $\{\tilde{\omega}_2^i (x; \lambda, \lambda)\}_{i = l + 1}^{2l}$, we see that 
(\ref{straightforward-relation}) is satisfied. 

As in previous calculations along these lines, we compute 
the derivative of $\tilde{\omega}_2^{l+1} (x; \lambda, \lambda_1)$
as the sum of $2l$ determinants, each with a derivative on 
each entry in exactly one row. The first of these determinants is 
\begin{equation} \label{omega2-second-ellp1-prime1}
    \det
    \begin{pmatrix}
    g_{11}' (\lambda) & \dots & g_{1l}' (\lambda) 
    & h_{11}' (\lambda_2) & \dots & h_{1l}' (\lambda_2) \\
     \vdots & \vdots & \vdots & \vdots & \vdots & \vdots \\
    g_{l1} (\lambda) & \dots & g_{ll} (\lambda) 
    & h_{l1} (\lambda_2) & \dots & h_{l l} (\lambda_2) \\
    0 & \dots & 0
    & - (\lambda_2 - \lambda_1) h_{11} (\lambda_2)  & \dots & - (\lambda_2 - \lambda_1) h_{1l} (\lambda_2) \\
    g_{(l+2)1} (\lambda) & \dots & g_{(l+2)l} (\lambda) 
    & h_{(l+2)1} (\lambda_2) & \dots & h_{(l+2) l} (\lambda_2) \\ 
     \vdots & \vdots & \vdots & \vdots & \vdots & \vdots \\
    g_{(2l)1} (\lambda) & \dots & g_{(2l)l} (\lambda) 
    & h_{(2l)1} (\lambda_2) & \dots & h_{(2l)l} (\lambda_2)
    \end{pmatrix}.
\end{equation}
In this case, 
\begin{equation*}
    g_{1j}' = \frac{1}{b_1} g_{(l+1)j}, 
    \quad  h_{1j}' = \frac{1}{b_1} h_{(l+1)j},
    \quad \forall \, j \in \{1, 2, \dots, l\},
\end{equation*}
and we see that (\ref{omega2-second-ellp1-prime1}) becomes
\begin{equation} \label{omega2-second-ellp1-prime2}
    \frac{(\lambda_2 - \lambda_1)}{b_1} \det
    \begin{pmatrix}
    g_{(l+1)1} (\lambda) & \dots & g_{(l+1)l} (\lambda) 
    & h_{(l+1)1} (\lambda_2) & \dots & h_{(l+1) l} (\lambda_2) \\
     \vdots & \vdots & \vdots & \vdots & \vdots & \vdots \\
      g_{l1} (\lambda) & \dots & g_{ll} (\lambda) 
    & h_{l1} (\lambda_2) & \dots & h_{l l} (\lambda_2) \\
    0 & \dots & 0
    & - h_{11} (\lambda_2)  & \dots & - h_{1l} (\lambda_2) \\
    g_{(l+2)1} (\lambda) & \dots & g_{(l+2)l} (\lambda) 
    & h_{(l+2)1} (\lambda_2) & \dots & h_{(l+2) l} (\lambda_2) \\ 
     \vdots & \vdots & \vdots & \vdots & \vdots & \vdots \\
    g_{(2l)1} (\lambda) & \dots & g_{(2l)l} (\lambda) 
    & h_{(2l)1} (\lambda_2) & \dots & h_{(2l)l} (\lambda_2)
    \end{pmatrix}.
\end{equation}
Using Hadamard's inequality for determinants, we can bound this 
term by 
\begin{equation} \label{hadamard-b1}
\frac{\lambda_2 - \lambda_1}{b_1} |g_1 (x; \lambda)| \cdots |g_l (x; \lambda)|
|h_1 (x; \lambda_2)| \cdots |h_l (x; \lambda_2)|.
\end{equation}

For the next $(l-1)$ summands of $\partial_x \tilde{\omega}_2^{l+1} (x; \lambda, \lambda_1)$,
we similarly start with a derivative on the $j^{\rm th}$ row ($j \in \{2, \dots, l\}$) of
the matrix under determinant in $\tilde{\omega}_2^{l+1} (x; \lambda, \lambda_1)$.
In each of these cases, the $j^{\rm th}$ row becomes linearly dependent with the 
$(l+j)^{\rm th}$ row, and the resulting determinant is 0. This brings us to the 
summand obtained by differentiating the $(l+1)^{\rm st}$ row of
the matrix under determinant in $\tilde{\omega}_2^{l+1} (x; \lambda, \lambda_1)$,
and it's straightforward to see that this term can again be estimated by 
(\ref{hadamard-b1}). 

In order to understand the determinants with derivatives on rows $l+2$ through 
$2l$, we focus on the first. For this, we have 
\begin{equation*}
    \begin{aligned}
    \det &
    \begin{pmatrix}
    g_{11} (\lambda) & \dots & g_{1l} (\lambda) 
    & h_{11} (\lambda_2) & \dots & h_{1l} (\lambda_2) \\
     \vdots & \vdots & \vdots & \vdots & \vdots & \vdots \\
    g_{l1} (\lambda) & \dots & g_{ll} (\lambda) 
    & h_{l1} (\lambda_2) & \dots & h_{ll} (\lambda_2) \\ 
    0 & \dots & 0
    & - (\lambda_2 - \lambda_1) h_{11} (\lambda_2)  & \dots & - (\lambda_2 - \lambda_1) h_{1l} (\lambda_2) \\
    g_{(l+2)1}' (\lambda) & \dots & g_{(l+2)l}' (\lambda) 
    & h_{(l+2)1}' (\lambda_2) & \dots & h_{(l+2) l}' (\lambda_2) \\ 
     \vdots & \vdots & \vdots & \vdots & \vdots & \vdots \\
    g_{(2l)1} (\lambda) & \dots & g_{(2l)l} (\lambda) 
    & h_{(2l)1} (\lambda_2) & \dots & h_{(2l)l} (\lambda_2)
    \end{pmatrix} \\
    &= \det
    \begin{pmatrix}
    g_{11} (\lambda) & \dots & g_{1l} (\lambda) 
    & h_{11} (\lambda_2) & \dots & h_{1l} (\lambda_2) \\
     \vdots & \vdots & \vdots & \vdots & \vdots & \vdots \\
    g_{l1} (\lambda) & \dots & g_{ll} (\lambda) 
    & h_{l1} (\lambda_2) & \dots & h_{ll} (\lambda_2) \\ 
    0 & \dots & 0
    & - (\lambda_2 - \lambda_1) h_{11} (\lambda_2)  & \dots & - (\lambda_2 - \lambda_1) h_{1l} (\lambda_2) \\
    a_{(l+2)k} g_{k1} (\lambda) & \dots & a_{(l+2)k} g_{kl} (\lambda) 
    & a_{(l+2)k} h_{k1} (\lambda_2) & \dots & a_{(l+2)k} h_{kl} (\lambda_2) \\ 
     \vdots & \vdots & \vdots & \vdots & \vdots & \vdots \\
    g_{(2l)1} (\lambda) & \dots & g_{(2l)l} (\lambda) 
    & h_{(2l)1} (\lambda_2) & \dots & h_{(2l)l} (\lambda_2)
    \end{pmatrix}, 
    \end{aligned}
\end{equation*}
where for typesetting considerations we're using the convention of 
summing over any index appearing twice in an expression. E.g., 
written out in full
\begin{equation*}
    a_{(l+2)k} g_{k1} (\lambda)
    = \sum_{k=1}^{2l} a_{(l+2)k} (x; \lambda) g_{k1} (x; \lambda),
\end{equation*}
and similarly for other such entries. 

We can use row operations to eliminate all except two of the summands
involving components of $\mathbf{G} (x; \lambda)$ in 
row $(l+1)$. In particular, we can eliminate all summands {\it except} 
\begin{equation*}
    a_{(l+2)(l+1)} (x; \lambda) g_{(l+1)j} (x; \lambda)
    + a_{(l+2)(l+2)} (x; \lambda) g_{(l+2)j} (x; \lambda),
    \quad j = 1, 2, \dots, l.
\end{equation*}
For summands involving components of $\mathbf{H} (x; \lambda_2)$, we correspondingly 
obtain sums of the form 
\begin{equation*}
\begin{aligned}
    a_{(l+2)(l+1)} &(x; \lambda_2) h_{(l+1)j} (x; \lambda_2)
    + a_{(l+2)(l+2)} (x; \lambda_2) h_{(l+2)j} (x; \lambda_2) \\
    &+ \sum_{k \notin \{(l+1), (l+2)\}} (a_{(l+2)k} (x; \lambda_2) - a_{(l+2)k} (x; \lambda)) h_{k1} (x; \lambda_2),
    \quad j = 1, 2, \dots, l.
\end{aligned}
\end{equation*}

Using (\ref{second-order-A}), we see that
\begin{equation} \label{first-terms}
    a_{(l+2)(l+1)} (x; \lambda) = (W(x) B^{-1})_{21}
    = \frac{W_{21} (x)}{b_1}, \quad
    a_{(l+2)(l+2)} (x; \lambda) = (W(x) B^{-1})_{22}
    = \frac{W_{22} (x)}{b_2},
\end{equation}
and similarly 
\begin{equation} \label{second-terms}
    \sum_{k \notin \{(l+1), (l+2)\}} (a_{(l+2)k} (x; \lambda_2) - a_{(l+2)k} (x; \lambda)) h_{kj} (x; \lambda_2) 
    = - (\lambda_2 - \lambda) h_{2j}, \quad j = 1, 2, \dots, l.
\end{equation}
The terms (\ref{first-terms}) lead to an estimate by 
\begin{equation} \label{first-estimate}
    \Big(\frac{|W_{21} (x)|}{b_1} + \frac{|W_{22} (x)|}{b_2} \Big)
    |g_1 (x; \lambda)| \cdots |g_l (x; \lambda)|
    |h_1 (x; \lambda_2)| \cdots |h_l (x; \lambda_2)|,
\end{equation}
while the remaining terms (\ref{second-terms}) lead to the determinant 
\begin{equation} \label{full-determinant}
    \det
    \begin{pmatrix}
    g_{11} (\lambda) & \dots & g_{1l} (\lambda) 
    & h_{11} (\lambda_2) & \dots & h_{1l} (\lambda_2) \\
     \vdots & \vdots & \vdots & \vdots & \vdots & \vdots \\
    g_{l1} (\lambda) & \dots & g_{ll} (\lambda) 
    & h_{l1} (\lambda_2) & \dots & h_{ll} (\lambda_2) \\ 
    0 & \dots & 0
    & - (\lambda_2 - \lambda_1) h_{11} (\lambda_2)  & \dots & - (\lambda_2 - \lambda_1) h_{1l} (\lambda_2) \\
     0 & \dots & 0
    & - (\lambda_2 - \lambda) h_{21} (\lambda_2)  & \dots & - (\lambda_2 - \lambda) h_{2l} (\lambda_2) \\
     g_{(l+3)1} (\lambda) & \dots & g_{(l+3)l} (\lambda) 
    & h_{(l+3)1} (\lambda_2) & \dots & h_{(l+3)l} (\lambda_2) \\ 
     \vdots & \vdots & \vdots & \vdots & \vdots & \vdots \\
    g_{(2l)1} (\lambda) & \dots & g_{(2l)l} (\lambda) 
    & h_{(2l)1} (\lambda_2) & \dots & h_{(2l)l} (\lambda_2)
    \end{pmatrix}.
\end{equation}

To lowest order in $r$, we can compute this determinant with 
$\mathbf{G} (x; \lambda)$ and $\mathbf{H} (x; \lambda_2)$ 
respectively approximated by $\mathbf{G}_0 (x; \lambda)$ 
and $\mathbf{H}_0 (x; \lambda_2)$ as in (\ref{lowest-order-G})
and (\ref{lowest-order-H}). In this way, we obtain a determinant of 
the form 
\begin{equation*}
    \det
    \begin{pmatrix}
    I & 0 \\
    \tilde{\mathcal{G}} (x; \lambda) & \tilde{I}
    \end{pmatrix}
    = \det(\tilde{I}).
\end{equation*}
where the temporary notation $\tilde{\mathcal{G}} (x; \lambda)$
signifies the matrix obtained by taking the first two rows
of $\mathcal{G} (x; \lambda)$ to be identically zero while leaving 
all other rows unchanged, and 
likewise $\tilde{I}$ signifies the matrix obtained by taking the first two rows
of $I$ to be identically zero while leaving 
all other rows unchanged. Since $\det (\tilde{I}) = 0$, we can 
conclude that the full determinant (\ref{full-determinant}) is 
order $r$. 

These details have been carried out for the single term 
$\tilde{\omega}_2^{l+1} (x; \lambda, \lambda_1)$, and only
for the cases in which derivatives appear on one of the 
first $l+2$ rows. However, the analysis of the terms
with derivatives on the remaining rows, and the analysis 
of the remaining terms $\{\tilde{\omega}_2^{i} (x; \lambda, \lambda_1)\}_{i=l+2}^{2l}$
introduces no additional complications, and we can conclude that there 
exists a constant $K > 0$, independent of the values $\{b_i\}_{i=1}^l$,
so that 
\begin{equation*}
    \tilde{\omega}_2 ' (x; \lambda) \le K r,
\end{equation*}
and consequently 
\begin{equation*}
    |\frac{\tilde{\omega}_2 ' (x; \lambda)}{d (x; \lambda)}| \le \frac{K r}{d_{\min}}.
\end{equation*}

In our general invariance relation (\ref{invariance-criterion}), 
we can now take $C$ as specified in Lemma \ref{invariance-proposition1},
with 
\begin{equation*}
\begin{aligned}
    C_d &= \frac{l!C_A}{c_g^2} + \frac{l!}{c_h^2} \max_{x \in [0, 1]} \|A (x; \lambda_2)\| \\
    \delta &= \frac{K r}{d_{\min}}
\end{aligned}
\end{equation*}
keeping in mind that $C_a$ and $C_A$ can both be bounded independently
of the values $\{b_i\}_{i=1}^l$. Since $C$ can be taken 
independent of the values $\{b_i\}_{i=1}^l$, and $\delta$
can be taken as small as we like by decreasing 
$r$, we can ensure (\ref{invariance-criterion}) holds
so long as we can show that $\rho (0; \lambda)$ remains 
bounded away from 0 as $r$ decreases. 

For this final point, we recall that $\rho (0; \lambda)$
can be expressed as 
\begin{equation*}
    \rho (0; \lambda) = \frac{1}{2 d (0; \lambda)^2} (\tilde{\omega}_1 (0; \lambda)^2 + \tilde{\omega}_2 (0; \lambda)^2).
\end{equation*}
We can compute this value to lowest order in $r$ by using 
the frames $\mathbf{G} (0; \lambda) = {I \choose \Theta}$ and 
$\mathbf{H} (0; \lambda_2) = {0 \choose I}$. We see immediately
that 
\begin{equation*}
    \tilde{\omega}_1 (0; \lambda) 
    = \det
    \begin{pmatrix}
    I & 0 \\
    \Theta & I
    \end{pmatrix} = \det I = 1,
\end{equation*}
from which we can conclude that to lowest order 
in $r$
\begin{equation*}
    \rho (0; \lambda)
    \ge \frac{1}{2 d_{\min}^2}.
\end{equation*}
\end{proof}

\subsubsection{Example Case with Invariance}
\label{example2-section}

In this section, we will apply Theorem \ref{main-theorem} to 
(\ref{second-order-equation}) with $l = 2$, taking specifically 
$B$ to be the $2 \times 2$ identity matrix and
\begin{equation} \label{example2-coefficients}
    V (x) = \begin{pmatrix}
    10 \sin(10x) \cos(10x) & 25 \sin (10x) \\
    x (1-x) & 10 \cos (10x)
    \end{pmatrix},
    \quad
    W (x) = \begin{pmatrix}
    5x (1-x) & 0 \\
    0 & 5x (1-x)
    \end{pmatrix},
\end{equation}
along with Neumann boundary conditions at both $x = 0$ and $x = 1$.
For Neumann conditions, it's natural to take the frames for $\mathpzc{p}$ 
and $\mathpzc{q}$ to both be ${I \choose 0}$. For this example, we 
are not taking the entries of $B$ to be large, and in addition we are not 
using boundary conditions allowed by Proposition \ref{proposition2}, so 
we do not have an a priori guarantee of 
invariance. Nonetheless, we will (numerically) check 
invariance by computing $\rho (x; \lambda)$ throughout (a grid on)
the full Maslov box (including the interior). Indeed, one of our goals with this example is 
to illustrate that there is an enormous gap between systems for 
which we have rigorously verified invariance and systems for which 
invariance holds. 

We will count the number of eigenvalues the system 
(\ref{second-order-equation})--(\ref{example2-coefficients}) has
on the interval $[\lambda_1, \lambda_2] = [-5, 1]$. For this, 
we compute  $\ind (\mathpzc{g} (\cdot; -5), \mathpzc{h} (\cdot; 1); [0, 1])$,
and we find two crossing points, at about $x = .043$ and $x = .455$ 
(with a stepsize in the computation of $.001$). If the system is known to be invariant 
on $[0,1] \times [-5, 1]$ then we can conclude that 
(\ref{second-order-equation})--(\ref{example2-coefficients}) has
at least two eigenvalues on the interval $[-5, 1]$. This is the
most information that we can get out of Theorem \ref{main-theorem}
for this example, but computationally, we find approximately that 
\begin{equation*}
    \min_{\underset{\lambda \in [\lambda_1, \lambda_2]}{x \in [0, 1]}}
    \rho (x; \lambda) = .6279,
\end{equation*}
suggesting that invariance indeed holds. The full Maslov box 
for this example is depicted in Figure \ref{eg2_figure}. The 
eigenvalues are at roughly $\lambda = -1.385$ and $\lambda = .735$ 
(with a stepsize in the computation of $.0025$). 

\begin{figure}[ht] 
\begin{center}\includegraphics[%
  width=12cm,
  height=8cm]{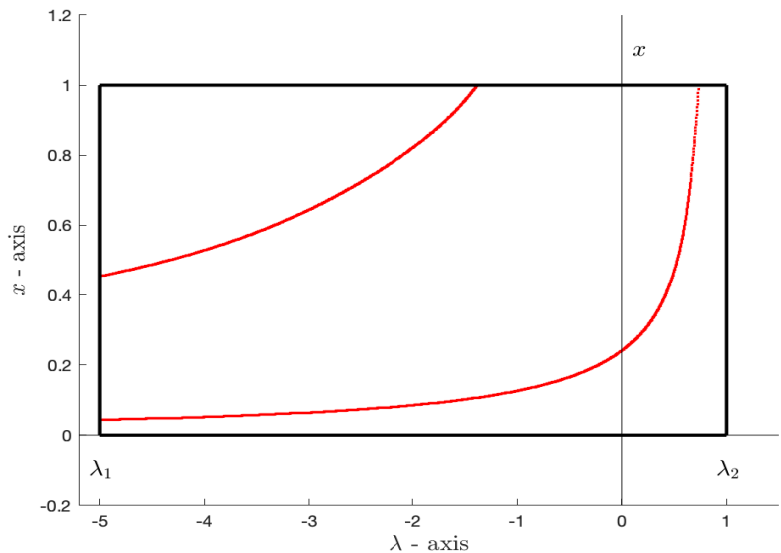}
\end{center}
\caption{Full Maslov Box and spectral curves for (\ref{second-order-equation})--(\ref{example2-coefficients}).  
\label{eg2_figure}}
\end{figure}

\subsubsection{Example Case without Invariance}
\label{example3-section}

An enormous amount remains to be said about invariance, and as a point
of interest, we compute the full Maslov box for a case in which invariance
fails to hold at precisely two points in the interior of the Maslov box. 
For this example, we'll take
(\ref{second-order-equation}) with $l = 2$, taking again 
$B$ to be the $2 \times 2$ identity matrix and choosing
\begin{equation} \label{example3-coefficients}
    V (x) = \begin{pmatrix}
    10 \sin(10x) \cos(10x) & 25 \sin (10x) \\
    x (1-x) & 10 \cos (10x)
    \end{pmatrix},
    \quad
    W (x) = \begin{pmatrix}
    5x (1-x) & 10 \sin(10x) \\
    10 \cos(10x) & 5x (1-x)
    \end{pmatrix},
\end{equation}
along with Neumann boundary conditions at both $x = 0$ and $x = 1$. 

For the system (\ref{second-order-equation})--(\ref{example3-coefficients}), 
we find by numerical computation that $\rho (x; \lambda)$ has two zeros 
in the interior of the Maslov box $[0, 1] \times [-5, 1]$, approximately
at the points $(.875, - 3.348)$ and $(.875, -.130)$. This suggest that 
invariance fails in this case. The full Maslov box for this example is 
depicted in Figure \ref{eg3-figure}. On the left-hand side of the figure, the
Maslov box is drawn for $[\lambda_1, \lambda_2] = [-5, 1]$, and we see
that the associated spectral curve is a loop contained entirely in the 
interior of the Maslov box, with left-most and right-most points 
corresponding precisely with zeros of $\rho (x; \lambda)$. Since no
spectral curves intersect the boundary of the Maslov box, it's clear
that $\mathfrak{m} = 0$, and it's interesting to understand how we 
can see this from the local considerations discussed in Section 
\ref{invariance-subsection}. To this end, we consider the contribution 
to $\mathfrak{m}$ from each of the points at which invariance is
lost. First, at $(.875, - 3.348)$, Figure  \ref{eg3-figure} suggests
that the spectral curve can be expressed as a functional relation 
$\lambda = \lambda (x)$, with $\lambda' (.875) = 0$, 
and we have precisely the situation 
of the middle plot in Figure \ref{invariance-figure} (with $\lambda$
now in place of $s$ and $x$ in place of $t$). As in the discussion 
in Section \ref{invariance-subsection}, we can conclude that the 
contribution to $\mathfrak{m}$ from this point is $+2$. The second
point at which invariance is lost is $(.875, -.130)$, and again we 
see that near this point the spectral curve can be expressed as a functional relation 
$\lambda = \lambda (x)$, with $\lambda' (.875) = 0$. In this case, 
we have precisely the situation 
of the left-side plot in Figure \ref{invariance-figure}, and can 
conclude that the 
contribution to $\mathfrak{m}$ from this point is $-2$. Since there 
are no other points of invariance, the total generalized Maslov index along the 
boundary is $\mathfrak{m} = +2 + (-2) = 0$. Using this information 
in our application of Theorem \ref{main-theorem}, we 
can write 
\begin{equation*}
    \mathcal{N}_{\#} ([-5, 1])
    \ge | \# \{x \in (0, L]: \mathpzc{g} (x; 0) \cap \mathpzc{q} \ne \{0\} \} + \mathfrak{m}|
    = 0.
\end{equation*}
In fact, it's clear from the full Maslov box that $\mathcal{N}_{\#} ([-5, 1]) = 0$.

Turning to the Maslov box on the right-hand side of Figure 
\ref{eg3-figure}, we see again that we have invariance along
the boundary of the Maslov box. (Here, we recall that invariance 
is only lost on the right and left endpoints of the spectral 
curves.) By monotonicity, each of the crossing points along the 
left shelf gives a contribution to the generalized Maslov index of $-1$, so
$\mathfrak{m} = -2$. Again, it's interesting to see that we can 
identify this value from local information. In this case, 
the only point in the Maslov box at which invariance is lost
is $(.875, -.130)$, and we have already seen that its contribution 
to $\mathfrak{m}$ will be $-2$. Since there are no other contributions
to $\mathfrak{m}$ in this case, we conclude that $\mathfrak{m} = -2$. 
Using this information 
in our application of Theorem \ref{main-theorem}, we 
can write 
\begin{equation*}
    \mathcal{N}_{\#} ([-3, 1])
    \ge | \# \{x \in (0, L]: \mathpzc{g} (x; 0) \cap \mathpzc{q} \ne \{0\} \} + (-2)|
    = 0.
\end{equation*}
In fact, it's clear from the full Maslov box that $\mathcal{N}_{\#} ([-3, 1]) = 0$.

\begin{figure}[ht] 
\begin{center}\includegraphics[%
  width=8.0cm,
  height=6.4cm]{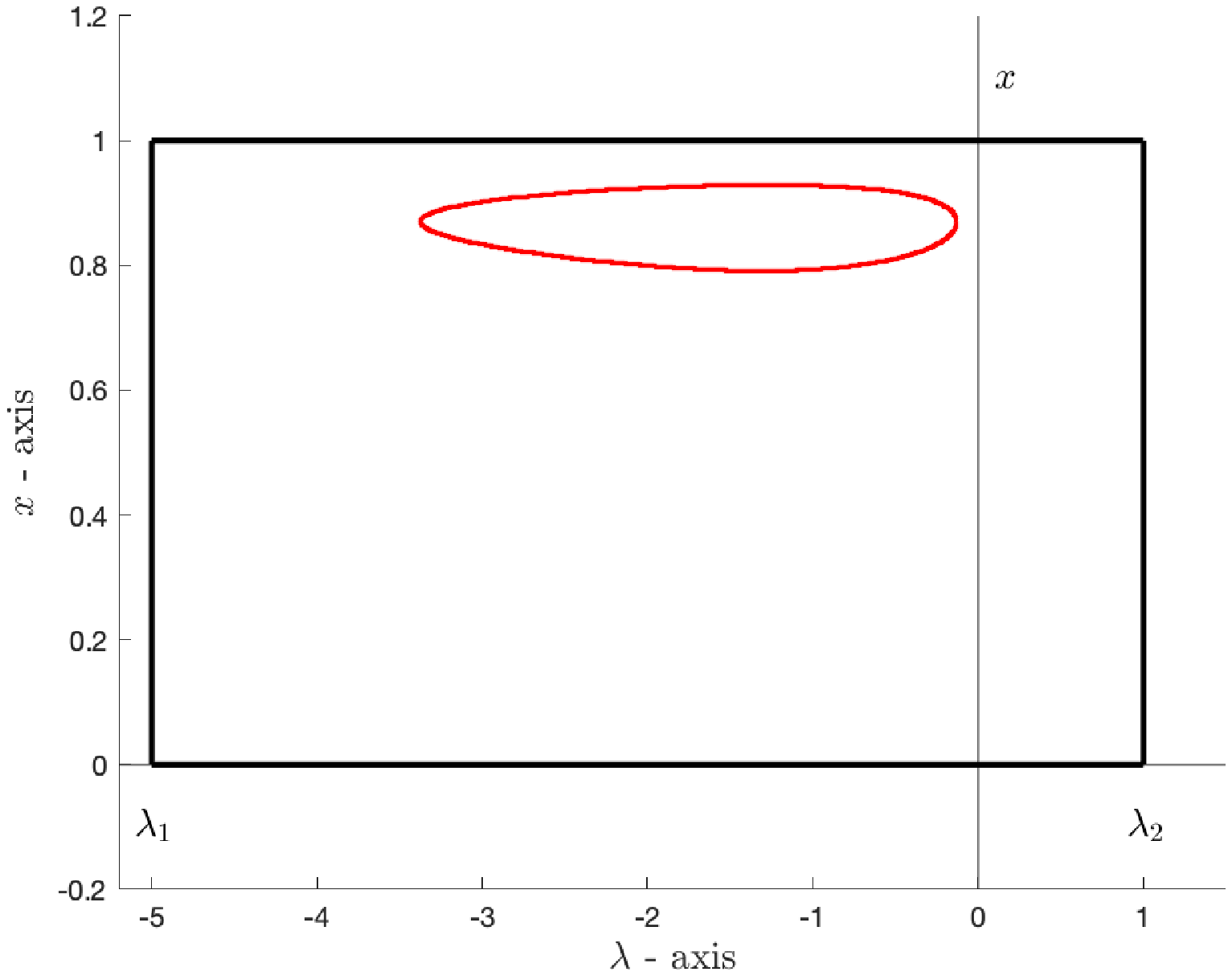}
\includegraphics[%
  width=8.0cm,
  height=6.4cm]{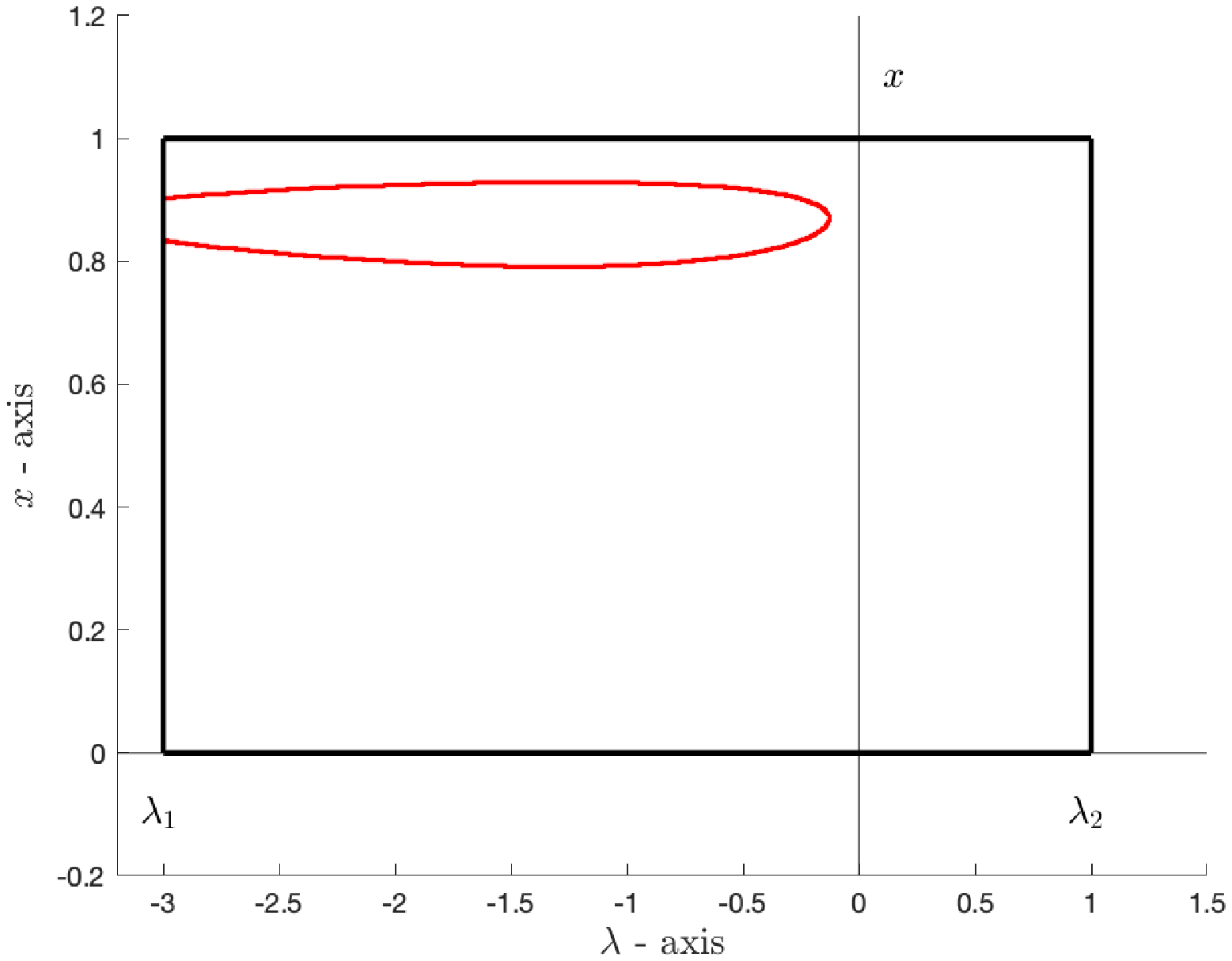}
\end{center}
\caption{Spectral curves for the system (\ref{second-order-equation})--(\ref{example3-coefficients}).
\label{eg3-figure}}
\end{figure}

\medskip
\noindent
{\it Acknowledgements}. The author is grateful to Graham Cox for patiently answering 
numerous questions about \cite{BCCJM2022}.

\end{document}